\documentclass[10pt]{amsart}
\usepackage{amsmath,amssymb,amsthm,epsfig,enumerate,youngtabExt,setspace,a4wide}

\newtheorem{thm}{Theorem}
\newtheorem{lemma}[thm]{Lemma}
\newtheorem{prop}[thm]{Proposition}

\theoremstyle{definition}
   \newtheorem{remark}[thm]{Remark}
\theoremstyle{definition}
   \newtheorem{example}[thm]{Example}

\newcommand{\ch}{\operatorname{ch}}
\newcommand{\mS}{\mathcal{S}}
\newcommand{\Tr}{\operatorname{Tr}}

\newcommand{\B}{\mathcal B}
\newcommand{\N}{\mathbb N}

\newcommand{\End}{\operatorname{End}}
\newcommand{\sgn}{\operatorname{sgn}}
\newcommand{\mbar}{\underline\mu}
\newcommand{\mhat}{\hat\mu}

\title[Young module multiplicities]{Young module multiplicities and classifying the indecomposable Young permutation modules}
\keywords{Young module; indecomposable; permutation module; symmetric group}
\subjclass[2010]{20C30}
\author{Christopher C. Gill}
\address{Department of algebra, Charles University, Sokolovska 83, Praha 8, 186 75, Czech Republic}
\email{gill@maths.ox.ac.uk}
\begin{document}
\begin{abstract}
We study the multiplicities of Young modules as direct summands of permutation modules on cosets of Young subgroups.  Such multiplicities have become known as the $p$-Kostka numbers.  We classify the indecomposable Young permutation modules, and, applying the Brauer construction for $p$-permutation modules, we give some new reductions for $p$-Kostka numbers.  In particular we prove that $p$-Kostka numbers are preserved under multiplying partitions by $p$, and strengthen a known reduction corresponding to adding multiples of a $p$-power to the first row of a partition.  
\end{abstract}
\maketitle

The symmetric group permutation modules on the cosets of Young subgroups are known as the Young permutation modules.  They play a central role in the representation theory of the symmetric group, and also in Schur algebras, relating representations of symmetric groups to polynomial representations of general linear groups. The indecomposable modules occurring in direct sum decompositions of Young permutation modules were parametrized by James, and have become known as the Young modules.  In the semisimple case, the Young modules are the familiar Specht modules, and their multiplicities as direct summands of permutation modules (the Kostka numbers) have long been known by Young's rule.  However, when the group algebra of the symmetric group is not semisimple, the multiplicities remain undetermined.  It is well-known that the determination of these multiplicities is equivalent to determining the decomposition numbers of the symmetric groups and the Schur algebras. 

Let $\mS_r$ be the symmetric group on $r$ letters, and let $k$ be a field of characteristic $p$.  Both the Young permutation modules and the Young modules for $k\mS_r$ are indexed by the partitions of $r$, and denoted by $M^\lambda$ and $Y^\lambda$ respectively.   
In this paper we focus on the $p$-Kostka numbers.  That is, the multiplicities $[M^\lambda:Y^\mu]$ of a Young module $Y^\mu$ as a direct summand of $M^\lambda$ for partitions $\lambda,\mu$ of $r$. Whilst a full determination of the $p$-Kostka numbers is out of reach, there are several known reduction formulae.  The first appears to be Klyachko's multiplicity formula (see for example \cite[Theorem 4.6.3(ii)]{MartinSchurAlgebras} or \cite{grabmeier}).  Donkin \cite[3.6]{DonkinTilting} has given an algorithmic description, related closely to Klyachko's formula,  of when a Young module occurs as a direct summand of a Young permutation module.  Other reductions have been given by Henke and Koenig in \cite{HenkeKoenig} and also by Fang, Henke, and Koenig in \cite{HenkeKoenigFang}, using Schur algebras and Ringel duality.  In \cite{HenkeKoenigFang} it is shown that a certain complement construction on partitions preserves $p$-Kostka numbers, and as a consequence, row and column removal formulae are then deduced.  Henke has determined 
the $p$-Kostka numbers corresponding to $2$-part partitions in \cite{henke}.

The paper is split into two halves.  In the first half, we determine several new reduction formulae for $p$-Kostka numbers. The first of these shows that multiplication of partitions by $p$ preserves the $p$-Kostka numbers as follows:
\begin{thm}\label{thm:multbyp}
Let $\lambda,\mu\vdash r$. The following holds: \[[M^{p\lambda}:Y^{p\mu}]=[M^\lambda:Y^\mu].\]  
\end{thm}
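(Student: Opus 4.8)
The plan is to apply the Brauer construction for $p$-permutation modules at the elementary abelian subgroup $Q=\langle c_1,\dots,c_r\rangle\cong(C_p)^r$ of $\mS_{pr}$, where $\{1,\dots,pr\}$ is split into $r$ consecutive blocks of size $p$ and $c_i$ is a $p$-cycle on the $i$th block; thus $Q$ is the base group of a copy of $C_p\wr\mS_r$ inside $\mS_{pr}$. Set $\bar N:=N_{\mS_{pr}}(Q)/Q\cong C_{p-1}\wr\mS_r$, which carries a surjection onto $\mS_r$ with $p'$-kernel $(C_{p-1})^r$; write $\operatorname{Inf}$ for the inflation of $k\mS_r$-modules along this quotient. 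Because the kernel is a $p'$-group, $\operatorname{Inf}$ is fully faithful, sends distinct indecomposables to distinct indecomposables, and preserves all multiplicities. The first computation is $M^{p\lambda}(Q)\cong\operatorname{Inf}(M^\lambda)$: identifying $M^{p\lambda}$ with the permutation module on the colourings of $\{1,\dots,pr\}$ in which colour $j$ is used $p\lambda_j$ times, the $Q$-fixed points are exactly the colourings that are constant on each block, and these correspond $\bar N$-equivariantly to colourings of the $r$ blocks realising $\lambda$. Since the Brauer quotient of a permutation module is the permutation module on the fixed points, this yields the claim.

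Next I would record which Young modules survive. A vertex of $Y^\nu$ lies in a Sylow $p$-subgroup of $\mS_\nu=\prod_i\mS_{\nu_i}$, so $Y^\nu(Q)\neq 0$ forces $Q$ to be $\mS_{pr}$-conjugate into $\mS_\nu$; but $Q$ consists of $r$ disjoint $p$-cycles, whereas $\mS_\nu$ contains at most $\sum_i\lfloor\nu_i/p\rfloor\le r$ disjoint $p$-cycles, with equality if and only if every $\nu_i$ is divisible by $p$. Hence $Y^\nu(Q)=0$ unless $\nu=p\kappa$ for some $\kappa\vdash r$. Writing $Y^{p\mu}(Q)\cong\bigoplus_\kappa B_{\mu\kappa}\operatorname{Inf}(Y^\kappa)$ (legitimate, as $Y^{p\mu}(Q)$ divides $\operatorname{Inf}(M^\mu)$), additivity of the Brauer quotient applied to $M^{p\lambda}=\bigoplus_\nu[M^{p\lambda}:Y^\nu]\,Y^\nu$, after discarding the vanishing terms, gives $\operatorname{Inf}(M^\lambda)\cong\bigoplus_\kappa\bigl(\sum_\mu[M^{p\lambda}:Y^{p\mu}]B_{\mu\kappa}\bigr)\operatorname{Inf}(Y^\kappa)$. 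Comparing with $\operatorname{Inf}(M^\lambda)\cong\bigoplus_\kappa[M^\lambda:Y^\kappa]\operatorname{Inf}(Y^\kappa)$ and using Krull--Schmidt yields the matrix identity $\tilde K B=K$, where $K=([M^\lambda:Y^\mu])$ and $\tilde K=([M^{p\lambda}:Y^{p\mu}])$. Both $K$ and $\tilde K$ are unitriangular for the dominance order (here using that $p\mu\trianglerighteq p\lambda$ iff $\mu\trianglerighteq\lambda$), so $B$ is unitriangular, with $B_{\mu\mu}=1$. Thus the theorem $\tilde K=K$ is equivalent to $B=I$, and since $\operatorname{Inf}(Y^\mu)$ already occurs once in $Y^{p\mu}(Q)$, it suffices to prove that each $Y^{p\mu}(Q)$ is indecomposable, whence $Y^{p\mu}(Q)\cong\operatorname{Inf}(Y^\mu)$.

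This indecomposability is where I expect the main obstacle to lie, since Brauer quotients at a non-vertex subgroup need not be indecomposable, so the argument must use the structure of the Young modules. My intended route is to determine the vertex of $Y^{p\mu}$ to be $\hat P=Q\rtimes P$, with $P\le\mS_r$ a vertex of $Y^\mu$ acting on the blocks (this is consistent with the extreme cases $\mu=(r)$ and $\mu=(1^r)$, giving respectively a full Sylow subgroup of $\mS_{pr}$ and $Q$ itself), and then to exploit the transitivity of the Brauer construction $Y^{p\mu}(\hat P)\cong\bigl(Y^{p\mu}(Q)\bigr)(P)$ together with Brou\'e's theorem, which makes $Y^{p\mu}(\hat P)$ indecomposable and identifies $Y^\mu(P)$ as an indecomposable projective at the $\mS_r$-level. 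The delicate point is that probing $Y^{p\mu}(Q)$ at the single subgroup $P$ cannot detect a spurious summand $\operatorname{Inf}(Y^\kappa)$ with $\kappa\triangleright\mu$ whose vertex fails to contain $P$; ruling these out is the crux. I would attempt this either through a full analysis of the vertices of $Y^{p\mu}$ and a Green-correspondence comparison, or, more cleanly, by lifting a primitive idempotent for $Y^{p\mu}$ through the Brauer map $\operatorname{End}_{k\mS_{pr}}(M^{p\mu})\to\operatorname{End}_{k\mS_r}(M^\mu)$ induced by $M^{p\mu}(Q)\cong\operatorname{Inf}(M^\mu)$, showing it maps to a primitive idempotent cutting out $Y^\mu$. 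Establishing that $Y^{p\mu}(Q)$ carries no higher-dominance summands is the step I anticipate will require the most care.
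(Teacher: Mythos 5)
Your reduction to the statement $Y^{p\mu}(Q)\cong\operatorname{Inf}(Y^\mu)$ is correct as far as it goes: the identification $M^{p\lambda}(Q)\cong\operatorname{Inf}(M^\lambda)$, the vanishing of $Y^\nu(Q)$ unless $p$ divides every part of $\nu$, and the resulting matrix identity $\tilde{K}B=K$ for $K=([M^\lambda:Y^\mu])$, $\tilde{K}=([M^{p\lambda}:Y^{p\mu}])$ are all sound. But this reduction is exactly where the proof stops being a proof: since $\tilde{K}$ is unitriangular, $\tilde{K}B=K$ gives $B=I$ if and only if $\tilde{K}=K$, so the deferred claim that $Y^{p\mu}(Q)$ has no summand $\operatorname{Inf}(Y^\kappa)$ with $\kappa\triangleright\mu$ is \emph{equivalent} to Theorem \ref{thm:multbyp} itself, and you prove it nowhere. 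Moreover, the route you sketch cannot close it as stated. Brou\'e's theorem applied at $\hat{P}=Q\rtimes P$ together with transitivity shows only that $\bigl(Y^{p\mu}(Q)\bigr)(P)$ is indecomposable; as you note, this is blind to any summand $\operatorname{Inf}(Y^\kappa)$ with $Y^\kappa(P)=0$, and such candidates genuinely exist within all the constraints you have established. Concretely, take $p=3$ and $\mu=(3,3)$: then $P$ is generated by two disjoint $3$-cycles, and by Theorem \ref{thm:2partKostka} one has $[M^{(3,3)}:Y^{(4,2)}]=1$ with $(4,2)\triangleright(3,3)$; but $(4,2)$ is $3$-restricted, so $Y^{(4,2)}$ is projective and $Y^{(4,2)}(P)=0$. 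Hence $\operatorname{Inf}(Y^{(4,2)})$ is a summand of $\operatorname{Inf}(M^{(3,3)})\cong M^{(9,9)}(Q)$ which your probe at $P$ could never exclude from $Y^{(9,9)}(Q)$. The idempotent-lifting alternative begs the same question: whether the algebra map $\End_{k\mS_{pr}}(M^{p\mu})\to\End_{k\mS_{r}}(M^{\mu})$ induced by the Brauer functor sends a primitive idempotent to a primitive idempotent is precisely the indecomposability statement at issue, and Brou\'e's theorem guarantees this behaviour only at a vertex, which $Q$ is not in general.

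The paper avoids this difficulty by never leaving the vertex. It applies the Brou\'e correspondence (Theorem \ref{thm:Brouecorrespondence}(b)) at a Sylow $p$-subgroup $P_{p\nu}$ of the Young vertex $\mS_{p\nu}$ of $Y^{p\mu}$, where the Brauer quotient of the Young module does not need to be determined: Erdmann's theorem (Theorem \ref{ErdmannBroue}(c)) gives $Y^{p\mu}(P_{p\nu})\cong Y^{\mu(0)}\boxtimes\cdots\boxtimes Y^{\mu(s)}\cong Y^{\mu}(P_\nu)$ over the common group $\mS_\beta$, because $p\mu$ has $p$-adic expansion $\sum_i\mu(i)p^{i+1}$. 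The only thing then left to verify is the $\mS_\beta$-isomorphism $M^{p\lambda}(P_{p\nu})\cong M^{\lambda}(P_\nu)$, which the paper proves by an explicit bijection between fixed tabloids (Proposition \ref{prop:multiplyingbyp}); the chain $[M^{p\lambda}:Y^{p\mu}]=[M^{p\lambda}(P_{p\nu}):Y^{p\mu}(P_{p\nu})]=[M^{\lambda}(P_\nu):Y^{\mu}(P_\nu)]=[M^{\lambda}:Y^{\mu}]$ then finishes the proof. In short, the content your proposal is missing is exactly what Erdmann's computation of Brauer quotients of Young modules at their vertices supplies; if you wish to keep the $Q$-based framework, the efficient repair is to invoke that result (or the paper's Proposition \ref{prop:multiplyingbyp}) rather than attempt to exclude the higher-dominance summands by probing at $P$ alone.
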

Theorem \ref{thm:multbyp} has been used by the author to prove that the multiplicities of Young modules as direct summands of tensor products of Young modules are preserved under multiplying the indexing partitions by $p$ (\cite[Theorem 3.6]{Gill2012}).  These results add to the known instances of stability under multiplying partitions by $p$ in the literature.  Such results are surveyed in \cite{HemmerTwistSurvey}.  Furthermore, as a consequence of Theorem \ref{thm:multbyp} we show (Theorem \ref{thm:decomp}) that if one knows the decomposition numbers of the principal block of the Schur algebra $S(rp,rp)$ (those indexed by partitions with empty $p$-core), then there is an algorithm to determine the decomposition numbers of the Schur algebra $S(r,r)$.

 Theorem \ref{thm:multbyp}, and further reductions are proved in Section \ref{sec:red}.  The main method is to use the Brauer construction applied to $p$-permutation modules as developed by Brou\'{e}, and the description of the Brauer quotients of Young modules given by Erdmann.  With some analysis of the combinatorics of Young vertices we first prove Theorem \ref{thm:multbyp}, and then use this to determine a lower bound for certain $p$-Kostka numbers corresponding to adding $p$-power multiples of partitions (Theorem \ref{thm:plusalphaplusdelta}).  Whilst this in general is an inequality, we give a sufficient condition for equality. The final reduction we give (Theorem \ref{thm:addingp}) says the $p$-Kostka numbers are preserved under adding integer multiples of certain $p$-powers to the first parts of the partitions; this result is a strengthening of a reduction given by Henke in \cite{henke}.
 
 In Section \ref{sec:dec} we derive the result described above for decomposition numbers, and recall that a combinatorial description relating decomposition numbers of $S(r,r)$ and $S(rp,rp)$ corresponding to $2$-part partitions is already understood.  By example we show that this does not extend to the general case.  

In Section \ref{sec:indec} we take a different approach and classify the indecomposable Young permutation modules.  We also determine precisely when a Young permutation module has direct summands occurring in more than one block.  We summarise these results as:

\begin{thm}\label{thm:indecpartitions}
Let $r\in\N$ and let $\lambda$ be a partition of $r$, then 
\begin{enumerate}[(a)]
\item if $p$ is odd, then $M^\lambda$ is indecomposable if, and only if, one of the following holds:
\begin{itemize}
\item $p$ divides $r$ and $\lambda$ is equal to $(r)$ or $(r-1,1)$;
\item $p$ does not divide $r$ and $\lambda=(r)$.
\end{itemize}
\item if $p=2$, then $M^\lambda$ is indecomposable if, and only if, one of the following holds:
\begin{itemize}
\item $r$ is odd and $\lambda=(r)$
\item $r$ is even and $\lambda=(r)$ or $\lambda$ is one of the $n$ partitions $(r-k_i,k_i)$ where $2^{n}\leq r<2^{n+1}$, and for each $1\leq i\leq n$ the non-negative integer $k_i$ is such that $2^{i-1}\leq k_i<2^{i}$ and $k_i\equiv \frac{r-2^n}{2} \mbox{ mod $2^{i-1}$}$.
\end{itemize}
\end{enumerate}
Furthermore, if either
\begin{enumerate}[(i)]
\item $p$ is odd, but $p$ and $r$ are not both $3$, or
\item $p=2$ and $r$ is odd,
\end{enumerate} then $M^\lambda$ is decomposable if, and only if, it has summands lying outside of the principal block.  

If $p=2$ and $r$ is even then $M^\lambda$ has direct summands lying outside of the principal block if, and only if $\lambda$ has at least $3$ parts, $r\geq 6$ and $\lambda\neq (r-2,1,1)$.
\end{thm}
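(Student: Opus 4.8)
The plan is to convert indecomposability into a count of summands via the Brauer construction and then to read off the admissible partitions combinatorially. Writing $M^\lambda=\bigoplus_{\mu}[M^\lambda:Y^\mu]\,Y^\mu$ over $\mu\trianglerighteq\lambda$ with $[M^\lambda:Y^\lambda]=1$, the module $M^\lambda$ is indecomposable precisely when $M^\lambda=Y^\lambda$. Since $M^\lambda=k[\Omega]$ with $\Omega=\mS_r/\mS_\lambda$ is a $p$-permutation module, I would count its indecomposable summands using Brou\'{e}'s theory: for each $p$-subgroup $Q$ the Brauer quotient is the fixed-point module $M^\lambda(Q)=k[\Omega^Q]$, regarded as a module for $\overline{N}(Q)=N_{\mS_r}(Q)/Q$, and the summands of $M^\lambda$ with vertex $Q$ correspond bijectively to the projective indecomposable summands of $M^\lambda(Q)$. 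Hence
\[ \#\{\text{summands of }M^\lambda\}=\sum_{[Q]}\#\{\text{proj.\ indec.\ summands of }M^\lambda(Q)\text{ over }k\overline{N}(Q)\}, \]
the sum running over conjugacy classes of $p$-subgroups, which for $M^\lambda$ may be restricted to the Young vertices lying in a Sylow $p$-subgroup of $\mS_\lambda$. Thus $M^\lambda$ is indecomposable if and only if exactly one such $Q$ contributes, and it contributes a single projective.

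The next step is to compute $M^\lambda(Q)=k[\Omega^Q]$ for the relevant $Q$. A coset $g\mS_\lambda$ is $Q$-fixed exactly when ${}^{g^{-1}}Q\le\mS_\lambda$, so $\Omega^Q$ is controlled by how the conjugates of $Q$ sit inside the Young subgroup; here I would use the combinatorics of Young vertices together with Erdmann's description of the Brauer quotients of Young modules, which realises $M^\lambda(Q)$, up to a twist by characters of $\overline{N}(Q)$, as a Young permutation module for a smaller symmetric group. Combined with Theorem \ref{thm:multbyp} and the vanishing (visible from the same Brauer-quotient analysis) of every multiplicity $[M^{p\lambda}:Y^\nu]$ with $\nu$ not divisible by $p$, this yields the clean reduction that $M^{p\lambda}$ and $M^\lambda$ have the same number of summands, so one may assume $\lambda$ is not divisible by $p$. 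The crucial qualitative consequence I would extract is that three or more parts force a second contributing vertex: for $\ell(\lambda)\ge 3$ there is a proper $p$-subgroup $Q$ for which $M^\lambda(Q)$ already carries a projective summand over $k\overline{N}(Q)$, producing a summand $Y^\mu$ with $\mu\vartriangleright\lambda$, and hence decomposability. This reduces the classification to partitions with at most two parts.

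For two parts I would invoke Henke's determination of the two-part $p$-Kostka numbers in \cite{henke}: $M^{(r-k,k)}$ is the permutation module on $k$-subsets, its summands are the $Y^{(r-j,j)}$ with $0\le j\le k$, and indecomposability means a single nonzero multiplicity. Reading off when this occurs is a $p$-adic computation. For $p$ odd one finds it forces $k=0$, or $k=1$ with $p\mid r$ (the natural module, which is uniserial exactly when $p\mid r$); every other two-part partition splits off a smaller summand. For $p=2$ the surviving cases are governed by the binary carries in $\binom{r}{k}$, and solving the resulting digit conditions produces exactly the partitions $(r-k_i,k_i)$ with $2^{i-1}\le k_i<2^{i}$ and $k_i\equiv\frac{r-2^n}{2}\pmod 2$ of part (b). The same count can be obtained intrinsically from the Brauer quotients at the elementary abelian and wreath-type $2$-subgroups of $\mS_{(r-k,k)}$, tracking the $2$-adic expansions of $r-k$ and $k$.

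Finally, for the block statement I would compute the $p$-core of each $\mu$ indexing a summand $Y^\mu$: the principal block consists of those $\mu$ whose $p$-core equals that of $(r)$, namely the single row $(r\bmod p)$. When $p$ is odd, or $p=2$ and $r$ is odd, I would check that any second summand produced above necessarily has a different $p$-core, giving the equivalence between decomposability and having a summand outside the principal block. When $p=2$ and $r$ is even, all two-part summands share the empty $2$-core of $(r)$, so out-of-block summands can only arise for $\ell(\lambda)\ge3$; a direct analysis of the smallest escaping summand shows these occur exactly when $r\ge6$ and $\lambda\neq(r-2,1,1)$, the family $(r-2,1,1)$ being the single three-part case whose summands all retain the empty $2$-core. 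The step I expect to be the main obstacle is the $p=2$ two-part analysis: matching the precise intervals and the parity condition on $k_i$ requires careful bookkeeping of $2$-adic carries in Henke's formula (equivalently, of the projective content of the Brauer quotients at each $2$-subgroup), and the exceptional role of $(r-2,1,1)$ in the block statement must be isolated by a separate computation.
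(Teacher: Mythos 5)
Your proposal contains a step that is actually false, and it is the step on which your whole reduction rests. You claim that $[M^{p\lambda}:Y^\nu]=0$ for every $\nu$ not divisible by $p$, and use this (with Theorem \ref{thm:multbyp}) to conclude that $M^{p\lambda}$ and $M^\lambda$ have the same number of summands, so that one may assume $\lambda$ is not divisible by $p$. This vanishing fails already for $p=2$ and $\lambda=(1,1,1)$: the partition $(3,2,1)$ is its own $2$-core, so $Y^{(3,2,1)}=S^{(3,2,1)}$ is a simple projective module lying in a block of defect zero, and the component of $M^{(2,2,2)}$ in that block is a direct sum of copies of $Y^{(3,2,1)}$ whose number equals $\langle\xi^{(2,2,2)},\chi^{(3,2,1)}\rangle$, which by Young's rule is the Kostka number $2$. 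Hence $[M^{(2,2,2)}:Y^{(3,2,1)}]=2\neq 0$ although $(3,2,1)$ has odd parts, and indeed $M^{(2,2,2)}$ has five indecomposable summands while $M^{(1,1,1)}=k\mS_3$ has three. The paper is explicit about this very point: just before Proposition \ref{prop:multby2indecs} it notes that deducing indecomposability of $M^{2\lambda}$ from that of $M^{\lambda}$ ``would require to prove $[M^{2\lambda}:Y^\nu]=0$ whenever $\nu\triangleright 2\lambda$ and at least one part of $\nu$ is odd,'' and it obtains that implication only \emph{a posteriori}, as a corollary of the finished classification, never as an input to it.

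The second gap is that your key qualitative claim --- that $l(\lambda)\geq 3$ forces a second contributing vertex, hence decomposability --- is asserted but not proved, and no candidate subgroup $Q$ is exhibited. Note that finding some $Q$ with a projective summand in $M^\lambda(Q)$ is not enough: if $Q$ is conjugate to the vertex of $Y^\lambda$, that projective may simply be the Brou\'e correspondent of $Y^\lambda$ itself, so you need either a non-conjugate vertex that contributes, or two projective summands at the vertex of $Y^\lambda$; nothing in your sketch produces either. The paper establishes this reduction by an entirely different mechanism: Lemma \ref{lem:chi} (an induction-product/Littlewood--Richardson argument) shows that $\xi^\lambda$ contains $\chi^{(r)}$ together with $\chi^{(r-1,1)}$, $\chi^{(r-2,2)}$, or $\chi^{(r-3,2,1)}$, and $p$-core computations place these in different blocks, forcing $M^\lambda$ to decompose; the single configuration where the block argument fails ($p=2$, $r$ even, $\lambda=(r-2,1,1)$) is settled by the explicit Brauer-quotient computation of Example \ref{ex:r-2,1,1}, giving $M^{(r-2,1,1)}\cong Y^{(r-2,1,1)}\oplus Y^{(r-1,1)}$. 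Your remaining steps --- the two-part analysis via Theorem \ref{thm:2partKostka} and Lucas' theorem, and the block statement via $p$-cores --- do coincide with the paper's, but without a correct proof that partitions with at least three parts (and, for odd $p$, the remaining two-part partitions you route through the false divisibility reduction) give decomposable modules, the proposal is not a proof.
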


This has applications to the study of the endomorphism algebras of Young permutation modules.  If $\lambda$ is a partition of $r$ with at most $n$ parts then the Schur algebra $S(n,r)$ has an idempotent $e_\lambda$ (described explicitly in \cite{GreenSchurAlgebras}) for which $e_\lambda S(n,r)e_\lambda$ is isomorphic to the endomorphism algebra $\End_{k\mS_r}(M^\lambda)$.  Thus, studying the endomorphism algebras of Young permutation modules gives information on the structure of Schur algebras.  By determining the indecomposable Young permutation modules, we have determined for which $\lambda$ the algebra $e_\lambda S(n,r)e_\lambda$ is local.

 \begin{example}When the characteristic $p$ is at least $3$, it follows by Theorem \ref{thm:indecpartitions} that the partitions of $r$ indexing the indecomposable Young permutation modules are very easily understood.  For $p=2$, we give the following two examples.
 \begin{enumerate}\item We apply Theorem \ref{thm:indecpartitions} to calculate the indecomposable Young permutation modules corresponding to partitions of $126$ over a field of characteristic $2$.
In this case $n=6$ and $r-2^n=126-2^n=62$.  Hence there are precisely seven partitions $(r-k,k)$ of $126$ which correspond to indecomposable Young permutation modules, the first of these is $(126)$.  The remaining six such partitions correspond to $(r-k_i,k_i)$ where $2^{i-1}\leq k_i< 2^i$ and $31\equiv k_i\; ({\rm mod}\; 2^{i-1})$ for $1\leq i \leq n$.  It follows that $k_1=1$, $k_2=3$, $k_3=7$, $k_4=15$, $k_5=31$, and $k_6=63$. Thus, the indecomposable Young permutation modules for $k\mS_{126}$ are:
\[M^{(126)},M^{(125,1)},M^{(123,3)},M^{(119,7)},M^{(111,15)},M^{(95,31)},M^{(63,63)}.\] 
\item Let $r=2^n$ for some $n\in\N$ and let $\lambda\vdash r$.  Then $M^\lambda$ is indecomposable if, and only if, $\lambda=(r)$ or $\lambda=(r-2^i,2^i)$ for $0\leq i\leq n-1$.
\end{enumerate}\end{example}

The proof of Theorem \ref{thm:indecpartitions} is the subject of section \ref{sec:indec}.  This material can be read independently of sections \ref{sec:red} and \ref{sec:dec}, except for relying on a particular calculation - namely Example \ref{ex:r-2,1,1}.
The partitions for which $M^\lambda$ has summands in more than one block are found by analysis of the ordinary characters of the permutation modules using the Littlewood--Richardson rule.  This, along with a dimension argument is enough to classify the indecomposable Young permutation modules over fields of odd characteristic.  The remaining cases in characteristic $p=2$ are solved by analysis of the $2$-Kostka numbers for $2$-part partitions as determined by Henke (see \cite{henke}). To conclude section \ref{sec:indec} we show that the partitions indexing the indecomposable Young permutation modules in characteristic $2$ are preserved by multiplication by $2$ and also when adding certain $2$-powers.

\section{Background}
We now give a brief introduction to the background theory.  The reader is referred to \cite{jameslecture} and \cite{JamesKerber} for background on representations of symmetric groups, and \cite{Landrock} or \cite{NagaoTsushima} for representations of finite groups.  Throughout, all modules are right modules.

Fix $r\in\N$.  If $\lambda$ is a partition of $r$, then we write $\lambda\vdash r$, and  $r=\sum_{i=1}^\infty\lambda_i=\mid\lambda\mid$ is the degree of $\lambda$.  A composition of $r$ differs from a partition in that the entries need not be nonincreasing.  If $\lambda$ is a composition of $r$, which is denoted by $\lambda\models r$, then the length of $\lambda$, denoted by $l(\lambda)$ is the number of non-zero parts of $\lambda$. Following the usual conventions, $\trianglerighteq$ denotes the dominance order on partitions, and in both partitions and compositions we allow zero entries at the end.   

If $\lambda\vdash r$ and $\mu\vdash n$, then $\lambda+\mu$ is the partition of $r+n$ with $i$th part given by $\lambda_i+\mu_i$. Additionally, if $a\in\N$ and $\lambda\vdash r$, then $a\lambda$ is the partition of $ar$ obtained by multiplying every part of $\lambda$ by $a$. We say that a composition $\lambda\models r$ is divisible by $a\in\N$ if $\lambda=a\mu$ for some $\mu\models\frac{r}{a}$. The concatenation of two compositions $\lambda$ and $\mu$ is denoted by $\lambda\bullet\mu$.  A composition $\gamma$ is a refinement of $\lambda$ if there exist compositions $\gamma^{(i)}\models \lambda_i$ for each $i$, such that $\gamma=\gamma^{(1)}\bullet\dots\bullet\gamma^{(t)}$ where $t=l(\lambda)$.  Furthermore, if $p$ is prime, then a partition in which every non-zero part is a power of $p$ is called a $p$-partition.

If $\lambda=(\lambda_1,\dots,\lambda_t)\models r$, then a Young subgroup corresponding to $\lambda$, denoted $\mS_\lambda$ is any subgroup of $\mS_r$ which is conjugate to \[\mS_{\{1,\dots,\lambda_1\}}\times\mS_{\{\lambda_1+1,\dots,\lambda_1+\lambda_2\}}\times\dots\times\mS_{\{\lambda_1+\dots+\lambda_{t-1}+1,\dots,r\}}.\]

We assume a basic familiarity with the notion of $\lambda$-tabloids and $\lambda$-tableaux as defined in \cite[\S 3]{jameslecture} and follow the same notation.

Let $F$ be a field.  Then the Young permutation module $M^\lambda_F$ is the permutation $F\mS_r$-module with $F$-basis the set of all $\lambda$-tabloids.  The point stabiliser of any $\lambda$-tabloid is a Young subgroup corresponding to $\lambda$.  

An important family of modules is that of the Specht modules.  If $t$ is a $\lambda$-tableau, with column stabiliser $C_t$, and $\{t\}$ is the $\lambda$-tabloid containing $t$, then define the polytabloid $e_t:=\{t\}K_t\in M^\lambda_F$ where $K_t$ is the signed column sum $\sum_{\sigma\in C_t}\sgn(\sigma)\sigma$.  The module generated by $e_t$ is the Specht module $S^\lambda_F$.  If $F$ is a field of characteristic zero, then $\{S^\lambda_F\mid\lambda\vdash r\}$ is a complete set of mutually inequivalent irreducible modules for $F\mS_r$.

Throughout this paper, all results concerning ordinary representation theory (over fields of characteristic zero) will be stated in terms of characters.  If $F$ is a field of characteristic zero, then we denote the character of $M^\lambda_F$ by $\xi^\lambda$, the character of $S^\lambda_F$ by $\chi^\lambda$ and the usual inner product on characters by $\langle,\rangle$.    

The multiplicities $\langle\xi^\lambda,\chi^\mu\rangle$ are well known by Young's rule as follows:
\begin{thm}[Young's Rule {\cite[14.1]{jameslecture}}]\label{thm:Youngsrule}
The multiplicity $\langle\xi^\lambda,\chi^\mu\rangle$ where $\lambda,\mu\vdash r$ is equal to the number of semistandard $\mu$-tableaux of type $\lambda$.
\end{thm}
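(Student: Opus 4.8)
The plan is to pass from the character inner product to a space of module homomorphisms, which is the natural home for a tableau count. Work over $F=\C$, so that $F\mS_r$ is semisimple. Since $\chi^\mu$ is the character of the irreducible Specht module $S^\mu_\C$ and $\xi^\lambda$ is the character of $M^\lambda_\C$, Schur's lemma together with Maschke's theorem gives
\[
\langle\xi^\lambda,\chi^\mu\rangle=[M^\lambda_\C:S^\mu_\C]=\dim_\C\Hom_{\C\mS_r}(S^\mu_\C,M^\lambda_\C).
\]
Thus it suffices to produce a basis of this Hom-space indexed by the semistandard $\mu$-tableaux of type $\lambda$.

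First I would describe $\Hom_{\C\mS_r}(M^\mu_\C,M^\lambda_\C)$, the larger space of maps out of the permutation module. As $M^\mu_\C=\operatorname{Ind}_{\mS_\mu}^{\mS_r}\C$, Frobenius reciprocity identifies this space with the $\mS_\mu$-fixed points $(M^\lambda_\C)^{\mS_\mu}$, which has as a basis the orbit sums of $\lambda$-tabloids under $\mS_\mu$. Following James, each such orbit sum is recorded by a $\mu$-tableau of type $\lambda$ (a filling of the diagram $[\mu]$ in which the entry $i$ occurs $\lambda_i$ times, considered up to reordering within rows), and this data defines the homomorphism $\theta_T\colon M^\mu_\C\to M^\lambda_\C$ sending the initial $\mu$-tabloid to the corresponding orbit sum. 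Restricting along the inclusion $S^\mu_\C\hookrightarrow M^\mu_\C$ yields $\hat\theta_T:=\theta_T|_{S^\mu_\C}$. In characteristic zero $S^\mu_\C$ is a direct summand of $M^\mu_\C$, so every homomorphism $S^\mu_\C\to M^\lambda_\C$ extends to $M^\mu_\C$; hence the maps $\hat\theta_T$, as $T$ ranges over all $\mu$-tableaux of type $\lambda$, span $\Hom_{\C\mS_r}(S^\mu_\C,M^\lambda_\C)$.

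The crux, and the step I expect to be the main obstacle, is the semistandard basis theorem: that the subfamily $\{\hat\theta_T : T \text{ semistandard of type } \lambda\}$ is already a basis. Linear independence follows from a triangularity argument: ordering $\lambda$-tabloids by dominance, the leading tabloid appearing in $\hat\theta_T(e_{t^\mu})$ is determined by $T$, and distinct semistandard $T$ give distinct leading terms. Spanning is harder: for a non-semistandard $T$ one must rewrite $\hat\theta_T$ as an integral combination of $\hat\theta_{T'}$ with $T'$ semistandard. This is carried out by a straightening procedure built on the Garnir relations in $S^\mu$, successively sorting the columns of $T$ while controlling the dominance order, and is the technical heart of the argument; it is precisely where the restriction to $S^\mu$, rather than all of $M^\mu$, is essential. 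Granting this, the dimension of the Hom-space equals the number of semistandard $\mu$-tableaux of type $\lambda$, and combining with the first displayed equation proves the theorem.

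As an alternative I would note the symmetric-function proof, which is shorter if one imports that machinery: under the Frobenius characteristic isometry $\operatorname{ch}$ one has $\operatorname{ch}(\chi^\mu)=s_\mu$ and $\operatorname{ch}(\xi^\lambda)=h_\lambda$, so $\langle\xi^\lambda,\chi^\mu\rangle=\langle h_\lambda,s_\mu\rangle$ in the Hall inner product; since $\{h_\lambda\}$ and $\{m_\lambda\}$ are dual bases and $s_\mu=\sum_\lambda K_{\mu\lambda}m_\lambda$ by the combinatorial definition of Schur functions, this pairing is exactly the Kostka number $K_{\mu\lambda}$, the number of semistandard $\mu$-tableaux of content $\lambda$. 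Here the obstacle is merely relocated to verifying $\operatorname{ch}(\chi^\mu)=s_\mu$, that is, the Frobenius character formula or the Jacobi--Trudi dictionary.
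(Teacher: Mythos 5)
The paper offers no proof of this statement at all---it is quoted as background from James's lecture notes \cite[14.1]{jameslecture}---and your main argument (reducing $\langle\xi^\lambda,\chi^\mu\rangle$ to $\dim_\C\Hom_{\C\mS_r}(S^\mu_\C,M^\lambda_\C)$ via semisimplicity, then invoking the basis of semistandard homomorphisms $\hat\theta_T$ established by the Garnir/straightening argument) is precisely the proof given in that cited source, so you have taken essentially the same route, and it is correct with the one genuinely technical step (the semistandard basis theorem) correctly identified and sketched. Your alternative via the Frobenius characteristic, $\langle h_\lambda,s_\mu\rangle=K_{\mu\lambda}$, is also a valid shortcut, merely relocating the work to the identity $\operatorname{ch}(\chi^\mu)=s_\mu$ as you note.
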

From now on, all modules are defined over a field $k$ of characteristic $p>0$.

The indecomposable direct summands of $M^\lambda$ were parameterized by James.  Specifically, in any direct sum decomposition of $M^\lambda$, there is precisely one direct summand containing $S^\lambda$ as a submodule.  Such a summand is defined up to isomorphism, and is known as the Young module corresponding to $\lambda$, denoted by $Y^\lambda$.  Moreover, every indecomposable direct summand of $M^\lambda$ is isomorphic to a Young module.  
\begin{thm}[\cite{trivsource}]\label{thm:Youngs}Let $\lambda\vdash r$, then there exists a family of self-dual indecomposable modules $\{Y^\mu\mid \mu\vdash r\}$ and non-negative integers $[M^\lambda:Y^\mu]$ for $\lambda,\mu\vdash r$ such that 
\begin{enumerate}[(a)]
\item $Y^\lambda\cong Y^\mu$ if, and only if, $\lambda=\mu$;
\item $M^\lambda\cong\bigoplus_{\mu\vdash r}[M^\lambda:Y^\mu]Y^\mu$;
\item $[M^\lambda:Y^\lambda]=1$, and if $[M^\lambda:Y^\mu]\neq 0$ then $\mu\trianglerighteq \lambda$;
\end{enumerate}
\end{thm}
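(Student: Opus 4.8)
The plan is to derive everything from two inputs: the self-duality of $M^\lambda$, and James's semistandard basis theorem, which gives $\dim_k\Hom_{k\mS_r}(S^\lambda,M^\mu)=K_{\lambda\mu}$ over any field, where $K_{\lambda\mu}$ is the ordinary Kostka number. In particular this dimension is $1$ when $\mu=\lambda$ and is $0$ unless $\lambda\trianglerighteq\mu$. Self-duality is immediate: the $\lambda$-tabloids form a $k\mS_r$-invariant orthonormal basis of $M^\lambda$, yielding a symmetric nondegenerate invariant form and hence $M^\lambda\cong(M^\lambda)^*$. Since $k\mS_r$ is finite-dimensional, Krull--Schmidt holds, so a decomposition into indecomposables is unique up to isomorphism and its multiset of summands is closed under duality.

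To construct $Y^\lambda$ I would fix a decomposition $M^\lambda=\bigoplus_iU_i$ into indecomposables, with projections $\pi_i$ and inclusions $\sigma_i$, and let $\iota\colon S^\lambda\hookrightarrow M^\lambda$. Writing $\iota=\sum_i\sigma_i\pi_i\iota$ and noting each $\sigma_i\pi_i\iota$ lies in the one-dimensional space $\Hom_{k\mS_r}(S^\lambda,M^\lambda)=k\iota$, I obtain scalars $c_i$ with $\sum_ic_i=1$ and $\sigma_i\pi_i\iota=c_i\iota$. For any $i$ with $c_i\neq0$ the map $\sigma_i\pi_i\iota$ is an injection with image $\iota(S^\lambda)\subseteq U_i$; two such indices would force $S^\lambda\subseteq U_i\cap U_j=0$, so exactly one $c_i$ is nonzero and $S^\lambda$ lies in a unique summand, which I define to be $Y^\lambda$ (well defined up to isomorphism by Krull--Schmidt). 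The same one-dimensionality shows $\Hom_{k\mS_r}(S^\lambda,U_j)\neq0$ for this index alone, so no other summand is isomorphic to $Y^\lambda$; hence $[M^\lambda:Y^\lambda]=1$, which is half of (c).

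The remaining formal properties then follow from the Hom bound. If $Y^\mu$ is any direct summand of $M^\lambda$, then $S^\mu\hookrightarrow Y^\mu\hookrightarrow M^\lambda$ is a nonzero homomorphism, so $\Hom_{k\mS_r}(S^\mu,M^\lambda)\neq0$ and therefore $\mu\trianglerighteq\lambda$; this is precisely the dominance constraint in (c), granted (b). For (a), if $Y^\lambda\cong Y^\mu$ then $S^\mu\hookrightarrow Y^\mu\cong Y^\lambda\subseteq M^\lambda$ gives $\mu\trianglerighteq\lambda$, and symmetrically $\lambda\trianglerighteq\mu$, so $\lambda=\mu$.

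The substantive point --- and the step I expect to be the \emph{main obstacle} --- is statement (b): that $M^\lambda$ has no indecomposable summands beyond the $Y^\mu$. I would prove this by downward induction on the dominance order, the top case $M^{(r)}=k=Y^{(r)}$ being trivial. For the inductive step I would invoke the Specht filtration of $M^\lambda$ coming from Young's rule over $\Z$, whose unique bottom section is $S^\lambda\subseteq Y^\lambda$ and whose higher sections are the $S^\nu$ with $\nu\rhd\lambda$; the goal is to show that the complement of $Y^\lambda$ is a direct summand of a sum of permutation modules $M^\nu$ with $\nu\rhd\lambda$, whereupon the inductive hypothesis identifies each of its summands as some $Y^\mu$ with $\mu\rhd\lambda$. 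Upgrading the filtration of this complement by higher Specht modules to its realization as a summand of higher permutation modules is the delicate part, and is where self-duality together with the explicit tabloid homomorphisms $M^\lambda\rightleftarrows M^\nu$ must enter. Self-duality of each $Y^\lambda$ itself would be obtained by checking that $(Y^\lambda)^*$ is again the unique summand of $M^\lambda$ containing $S^\lambda$: this is immediate when $\lambda$ is $p$-regular, since the Gram map $S^\lambda\to(S^\lambda)^*$ is then nonzero, but the $p$-singular case, where that map vanishes on $S^\lambda$, needs the further input that the idempotent cutting out $Y^\lambda$ can be chosen self-adjoint for the invariant form.
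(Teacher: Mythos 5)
Your argument rests on the claim that $\dim_k\Hom_{k\mS_r}(S^\lambda,M^\mu)=K_{\lambda\mu}$ over \emph{any} field, and this is false precisely in the case this paper cares most about: characteristic $2$. James's semistandard basis theorem \cite[13.13]{jameslecture} is proved only under the hypothesis that either ${\rm char}\,k\neq 2$ or $\lambda$ is $2$-regular, and the smallest example already breaks your deductions: for $r=2$ and $p=2$ the Specht module $S^{(1,1)}$ is the sign representation, hence trivial, so $\Hom_{k\mS_2}(S^{(1,1)},M^{(2)})\neq 0$, even though $K_{(1,1),(2)}=0$ and $(1,1)\not\trianglerighteq(2)$. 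Consequently the implication you use twice --- that $\Hom(S^\mu,M^\lambda)\neq 0$ forces $\mu\trianglerighteq\lambda$ --- fails in characteristic $2$, and with it your proofs of (a) and of the dominance half of (c); likewise the one-dimensionality of $\Hom(S^\lambda,M^\lambda)$ for $2$-singular $\lambda$, on which your construction of $Y^\lambda$ and the equality $[M^\lambda:Y^\lambda]=1$ depend, is not supplied by the cited theorem. Since half of Theorem \ref{thm:indecpartitions} concerns $p=2$, this is not a removable edge case. Separately, part (b) --- that \emph{every} indecomposable summand of $M^\lambda$ is some $Y^\mu$, which you yourself flag as the main obstacle --- is not actually proved: the crucial step of your induction (realizing the complement of $Y^\lambda$ as a direct summand of a sum of $M^\nu$ with $\nu\triangleright\lambda$) is essentially the content of the theorem and is left open, as is self-duality of $Y^\lambda$ for $p$-singular $\lambda$.

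For comparison: the paper does not prove this statement but cites \cite{trivsource}, and James's argument there sidesteps both problems by working with a $p$-modular system instead of mod-$p$ Hom spaces. Since $M^\lambda$ is a permutation module, it and all of its direct summands are trivial source modules and so lift to the $p$-adic integers, and an indecomposable trivial source module is determined up to isomorphism by the ordinary character of its lift. All the combinatorics is then done in characteristic zero, where Young's rule holds with no caveat: $\langle\xi^\lambda,\chi^\lambda\rangle=1$ pins down a unique summand of $M^\lambda$ whose lift affords $\chi^\lambda$ (this is $Y^\lambda$), the vanishing $\langle\xi^\lambda,\chi^\mu\rangle=0$ unless $\mu\trianglerighteq\lambda$ gives the dominance statements in (a) and (c), the determination-by-character rigidity is what makes (b) provable by downward induction on the dominance order, and self-duality is immediate because ordinary characters of $\mS_r$ are rational, so the dual of $Y^\lambda$ is an indecomposable trivial source module with the same character. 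If you want a self-contained proof, that lifting argument, rather than the semistandard basis theorem, is the route to take.
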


In particular since $M^{(1^r)}\cong k\mS_r$, it follows that every projective indecomposable $k\mS_r$-module is isomorphic to a Young module.  A Young module $Y^\lambda$ is projective if, and only if $\lambda$ is $p$-restricted, that is, if, and only if, $\lambda_i-\lambda_{i+1}<p$ for every $i$.

The Nakayama conjecture (first proved by Brauer and Robinson in \cite{BrauerNakayama} and \cite{RobinsonNakayama}) determines the blocks of $k\mS_r$ in terms of $p$-cores of partitions. Two Specht modules $S^\lambda$ and $S^\mu$ lie in the same block of $k\mS_r$ if, and only if, $\lambda$ and $\mu$ have the same $p$-core.  Thus, to each block $B$ of $k\mS_r$ is associated a $p$-core $\tau\vdash r-pw$ where $w$ is a non-negative integer called the weight of the block.  A Specht module $S^\lambda$, irreducible character $\chi^\lambda$, or a Young module $Y^\lambda$ lie in $B$ if, and only if, the corresponding partition $\lambda$ has $p$-core $\tau$.

\section{Some reductions for $p$-Kostka numbers}\label{sec:red}
Let $G$ be a finite group.  Let $M$ be an indecomposable $kG$-module, and $H$ a subgroup of $G$, then $M$ is said to be relatively $H$-projective if there exists a $kH$-module $N$ such that $M$ is a direct summand of the induced module $N\otimes_{kH}kG=N\!\uparrow^G$.  J.A.Green (\cite{Greenvertex}) proved that every indecomposable $kG$-module $M$ defines a subgroup $Q$ of $G$ such that if $H$ is a subgroup of $G$, then $M$ is relatively $H$-projective, if, and only if, $H$ contains a conjugate of $Q$.  Such a subgroup is always a $p$-group, is called a vertex of $M$ and is defined uniquely up to conjugacy in $G$.  If $H$ is a vertex of $M$ and $N$ is an indecomposable $kH$-module such that $M$ is a direct summand of $N\!\uparrow^G$, then $N$ is called a source of $M$, and $N$ is well-defined up to conjugacy in $N_G(H)$.

We make extensive use of the Brauer construction applied to $p$-permutation modules as developed by Brou\'{e} in \cite{broue}, which we describe below.

A $kG$-module $M$ is said to be a $p$-permutation module if, for every $p$-subgroup $P$ of $G$, the restriction of $M$ to $P$ is a permutation $kP$-module.  If $M$ is a $p$-permutation module then every direct summand of $M$ is a $p$-permutation module.  An indecomposable $kG$-module $U$ is a $p$-permutation module if, and only if, $U$ has trivial source, or equivalently, $U$ is a direct summand of a permutation module.

The Brauer construction for $kG$-modules is defined as follows.  Let $M$ be a $kG$-module, and $Q$ a subgroup of $G$.  We denote by $M^Q$ the set of elements of $M$ which are fixed under the action of $Q$.  If $P$ is a subgroup of $Q$, then the relative trace map $\Tr_P^Q:M^P\to M^Q$ is defined by \[\Tr_P^Q(m)=\sum_{i}mg_i,\] where the sum runs over a set of right coset representatives for $P$ in $Q$. 

 The Brauer quotient of $M$ with respect to $Q$ is defined to be the $N_G(Q)/Q$-module  \[M(Q):=M^Q/\sum_{P<Q}\Tr_P^Q(M^P).\] 

The parametrization of indecomposable $p$-permutation $kG$-modules is as follows:
\begin{thm}[{\cite[(3.2)]{broue}}, The Brou{\'e} correspondence]\label{thm:Brouecorrespondence}
An indecomposable $p$-permutation module $M$ has vertex $P$ if, and only if, $M(P)$ is a non-zero projective $kN_G(P)/P$-module. Moreover,
\begin{enumerate}[(a)]
\item The correspondence $M\to M(P)$ induces a bijection between the isomorphism
classes of indecomposable $p$-permutation $kG$-modules with vertex $P$ and the
isomorphism classes of indecomposable projective $kN_G(P)/P$-modules.
\item Let $M$ and $U$ be $p$-permutation $kG$-modules, and assume also that $U$ is indecomposable with vertex $P$,  
then $U$ is a direct summand of $M$ if, and only if, $U(P)$ is a direct summand of
$M(P)$, and they occur with the same multiplicities.
\end{enumerate} 
\end{thm}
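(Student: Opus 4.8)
The plan is to reduce everything to the behaviour of the Brauer construction on genuine permutation modules, where it admits a transparent combinatorial description, and then to feed this into Green's theory of vertices and sources. First I would establish the key computational lemma: if $M=k\Omega$ is a permutation $kG$-module on a finite $G$-set $\Omega$, then $M(Q)\cong k[\Omega^Q]$ as $kN_G(Q)/Q$-modules, where $\Omega^Q$ is the set of $Q$-fixed points. The point is that $M^Q$ has $k$-basis the $Q$-orbit sums, and the orbit sum $\sum_{\omega\in\mathcal O}\omega$ equals $\Tr_R^Q(\omega_0)$ for $R$ the stabiliser of a base point $\omega_0\in\mathcal O$; such an orbit sum lies in $\sum_{P<Q}\Tr_P^Q(M^P)$ precisely when $R\neq Q$, i.e. when $\mathcal O$ is not a fixed point. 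Quotienting therefore leaves exactly the fixed points as a basis, with $N_G(Q)$ permuting $\Omega^Q$ and $Q$ acting trivially. Since the Brauer construction is an additive functor and every $p$-permutation module is a direct summand of a permutation module, it follows that $M(Q)$ is a $p$-permutation $kN_G(Q)/Q$-module for every $p$-permutation $M$.

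Next I would pin down the vertex. Because an indecomposable trivial-source module $M$ with vertex $P$ is a summand of $k[G/P]$, the lemma gives that $M(P)$ is a summand of $k[(G/P)^P]$. Computing fixed points, $gP\in(G/P)^P$ iff $g^{-1}Pg\subseteq P$, which for the $p$-group $P$ forces $g\in N_G(P)$; hence $(G/P)^P=N_G(P)/P$ as an $N_G(P)$-set, so $k[(G/P)^P]$ is the free module $k[N_G(P)/P]$ and $M(P)$ is projective over $kN_G(P)/P$. For non-vanishing and the converse I would prove the standard criterion that, for indecomposable $p$-permutation $M$, one has $M(Q)\neq 0$ if and only if $Q$ is $G$-subconjugate to a vertex of $M$; thus the vertices are exactly the maximal subgroups with non-zero Brauer quotient, and in particular $M(P)\neq 0$ when $P$ is a vertex. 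When the vertex $Q$ strictly contains $P$, the same fixed-point computation on $(G/Q)^P$ produces transitive constituents with point stabilisers strictly larger than $P$, which obstructs projectivity of $M(P)$. Together these give the asserted equivalence: $M$ has vertex $P$ if and only if $M(P)$ is non-zero and projective.

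To obtain the bijection in (a) I would invoke the Green correspondence between indecomposable $p$-permutation $kG$-modules with vertex $P$ and indecomposable $p$-permutation $kN_G(P)$-modules with vertex $P$, combined with the observation that a $p$-permutation $kN_G(P)$-module whose vertex is the normal subgroup $P$ is, up to inflation, determined by its Brauer quotient: inflating an indecomposable projective $kN_G(P)/P$-module produces exactly such a module, and every such module arises this way. Tracking the Brauer construction along the Green correspondence then shows that $M\mapsto M(P)$ is well-defined, preserves indecomposability, and is a bijection onto the isomorphism classes of indecomposable projective $kN_G(P)/P$-modules. Part (b) should then follow formally: decomposing a $p$-permutation module $M$ into indecomposables and applying additivity of the Brauer functor, only the summands with vertex conjugate to $P$ contribute projective summands to $M(P)$, so by (a) each indecomposable $U$ of vertex $P$ occurs in $M$ with exactly the multiplicity with which the projective indecomposable $U(P)$ occurs in $M(P)$.

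I expect the main obstacle to be the bijectivity in (a)---specifically, that $M\mapsto M(P)$ preserves indecomposability and admits the inflation-of-projectives inverse---together with the closely related bookkeeping in (b) that vertex-$Q$ summands with $Q$ not conjugate to $P$ contribute no projective summand to $M(P)$. This is precisely where the soft additivity and fixed-point arguments are insufficient and one must bring in Green's correspondence and the detailed structure of $p$-permutation $kN_G(P)$-modules having $P$ as a normal vertex.
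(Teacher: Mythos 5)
First, a remark on the comparison itself: the paper does not prove this theorem. It is quoted as background, with the proof deferred entirely to Brou\'e \cite{broue}, so there is no internal proof to measure your argument against; what follows assesses your proposal on its own terms. Your overall architecture --- the fixed-point lemma $k\Omega(Q)\cong k[\Omega^Q]$, the deduction that a vertex-$P$ summand of $k[G/P]$ has projective Brauer quotient because $(G/P)^P=N_G(P)/P$, and Green correspondence plus the normal-vertex/inflation analysis for part (a) --- is the standard route (it is essentially how Brou\'e and later textbook treatments proceed), and those steps are sound.

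The genuine gap is the step claiming that if the vertex $Q$ strictly contains $P$, then the transitive constituents of $k[(G/Q)^P]$ have point stabilisers strictly larger than $P$, ``which obstructs projectivity of $M(P)$.'' Nontrivial $p$-point-stabilisers do \emph{not} obstruct the presence of projective direct summands. Take $G=\mS_3$, $p=2$, $Q=C_2$, $P=1$: here $k[(G/Q)^P]=k[G/Q]$ is the natural three-dimensional permutation module, which decomposes as $k\oplus S$ with $S$ two-dimensional and simple; the restriction of $S$ to a Sylow $2$-subgroup is free, so $S$ is projective even though the point stabiliser $C_2$ is a nontrivial $2$-group. Indeed, quite generally $k[G/Q](P)=k[(G/Q)^P]$ \emph{does} have projective summands whenever $k[G/Q]$ has an indecomposable summand with vertex conjugate to $P$, which is the typical situation --- so your inference fails exactly where it is needed, and the direction ``$M(P)$ nonzero projective $\Rightarrow$ $P$ is a vertex'' is left unproven. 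The standard repair uses transitivity of the Brauer construction: after conjugating assume $P<Q$; then $N_Q(P)>P$ since $Q$ is a $p$-group, and $M(P)\bigl(N_Q(P)/P\bigr)\cong M(N_Q(P))\neq 0$ by the non-vanishing criterion (as $N_Q(P)\leq Q$), whereas a projective $k[N_G(P)/P]$-module, being a summand of a free module, has vanishing Brauer quotient at every nontrivial $p$-subgroup; hence $M(P)$ cannot be projective.

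Two further ingredients are invoked rather than supplied, and they carry real content. The non-vanishing criterion itself ($M(Q)\neq 0$ whenever $Q$ is subconjugate to the vertex) needs the argument that $M\mid (M\!\downarrow_P)\!\uparrow^G$ together with Krull--Schmidt forces a trivial direct summand of $M\!\downarrow_P$, giving $M(P)\neq 0$; you only assert it. And in part (b), Green correspondence alone does not yield that an indecomposable summand whose vertex strictly contains a conjugate of $P$ contributes no projective summand to the Brauer quotient at $P$: as it stands that bookkeeping rests on the flawed ``obstruction'' step above. It can be fixed either by the transitivity argument just described applied summand by summand, by the Burry--Carlson--Puig theorem, or by Brou\'e's own device (surjectivity of the Brauer morphism on Hom-spaces of $p$-permutation modules followed by idempotent lifting), but some such input is indispensable.
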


 Let $M$ be a permutation $kG$-module, and $P$ a $p$-subgroup of $G$.  In this case, let $B$ be a permutation basis for $M$, and let $B^P$ be the set of elements of $B$ which are fixed under the action of $P$. Then,  
\[M^P\cong \langle B^P\rangle\oplus \sum_{Q<P}\Tr_Q^P(M^Q)\;\;\mbox{as a module for $N_G(P)$}.\]  
 It follows that $M(P)$ is isomorphic to the linear span of $B^P$ as a module for $N_G(P)/P$.

We return now to the case of Young modules for symmetric groups.  Grabmeier proved in \cite{grabmeier} that every Young module has an associated Young vertex.  That is, if $r\in\N$, and $\lambda\vdash r$, then there exists a Young subgroup $\mS_\nu$ such that the Young module $Y^\lambda$ is relatively $\mS_\mu$-projective if, and only if, $\mS_\nu$ is $\mS_r$-conjugate to a subgroup of $\mS_\mu$. It follows that for every Young module $Y^\lambda$ there is a unique partition $\nu$ such that $\mS_\nu$ is a Young vertex of $Y^\lambda$.  

We recall now that a partition $\lambda=(\lambda_1,\dots,\lambda_t)$ is $p$-restricted if $\lambda_i-\lambda_{i+1}<p$ for every $i$.  With this notation, every partition $\lambda$ has a unique $p$-adic expansion $\lambda=\sum_{i}\lambda(i)p^i$ where the $\lambda(i)$ are $p$-restricted partitions.  

Let $\lambda$ be a partition of $r$, then Grabmeier describes a Young vertex of $Y^\lambda$ as follows: Let $\lambda$ have $p$-adic expansion $\lambda=\sum_i\lambda(i)p^i$, and for each $i$, let $r_i=|\lambda(i)|$.  Define $\rho$ to be the partition of $r$ with $r_i$ parts equal to $p^i$ for each $i$, then $\mS_\rho$ is a Young vertex of $Y^\lambda$. 
 Grabmeier proved (\cite[4.7]{grabmeier}) that if $\mS_\nu$ is a Young vertex of $Y^\lambda$ then a Sylow $p$-subgroup of $\mS_\nu$ is a vertex of $Y^\lambda$.

If $\nu$ is a $p$-partition, and $P_\nu$ is a Sylow $p$-subgroup of $\mS_\nu$ then $P_\nu$ and $\mS_\nu$ have the same orbits  for the natural action on $\{1,\dots,r\}$.  In particular, $P_\nu$ is the trivial subgroup if, and only if, $\nu=(1^r)$.  It follows that a Young module $Y^\lambda$ is projective if, and only if, $\lambda$ is $p$-restricted.

We will now describe the Brauer quotient of a Young module:
We denote the outer tensor product by $\boxtimes$.  That is, if $G$ and $H$ are finite groups and $M$ is a $kG$-module, $N$ a $kH$-module, then $M\boxtimes N$ is the $k(G\times H)$-module with underlying vector space $M\otimes_k N$, and action given by linearly extending  \[(m\otimes n)(g,h)=mg\otimes nh.\]

Erdmann (\cite{ErdmannYoung}) parametrized the Young modules using only the representation theory of symmetric groups, and in the process described the Brauer quotients of Young modules as follows:

\begin{thm}[\cite{ErdmannYoung}]\label{ErdmannBroue}
Let $\rho$ be a $p$-partition of $r$, and for each $i$, let $r_i$ be the number of times $p^i$ occurs as a part of $\rho$.  Let $P$ be a Sylow $p$-subgroup of $\mS_\rho$.  
\begin{enumerate}[(a)]
\item  $N_{\mS_r}(\mS_\rho)=N_{\mS_r}(P)\mS_\rho$ and if $\beta$ is the composition $(r_0,r_1,\dots)$, then \[\mS_\beta\cong N_{\mS_r}(\mS_\rho)/\mS_\rho\cong N_{\mS_r}(P)/N_{\mS_\rho}(P).\]
\item  Let $\lambda\vdash r$, then $N_{\mS_\rho}(P)/P$ acts trivially on $Y^\lambda(P)$, so we may view $Y^\lambda(P)$ as a module for $\mS_\beta$;
\item   If $\mS_\rho$ is a Young vertex of $Y^\lambda$ then $P$ is a vertex of $Y^\lambda$, and \[Y^\lambda(P)\cong Y^{\lambda(0)}\boxtimes\dots\boxtimes Y^{\lambda(t)}\;\mbox{as a $\mS_\beta$-module,}\]
where $\sum_{i=0}^t\lambda(i)p^i$ is the $p$-adic expansion of $\lambda$. 
\end{enumerate}
\end{thm}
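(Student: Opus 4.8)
The plan is to take the three parts in order, using the permutation-basis description of the Brauer quotient recorded just before the theorem together with the Broué correspondence (Theorem~\ref{thm:Brouecorrespondence}). Throughout I write $\mS_\rho\cong\prod_i(\mS_{p^i})^{r_i}$, whose orbits on $\{1,\dots,r\}$ are the $\rho$-blocks; since a Sylow $p$-subgroup of $\mS_{p^i}$ is transitive on its $p^i$ points, the $P$-orbits coincide with these blocks. For part (a) I would first identify $N_{\mS_r}(\mS_\rho)$ directly: an element normalising $\mS_\rho$ must permute its orbits preserving their sizes, and conversely every such permutation, together with the block-internal symmetries, normalises $\mS_\rho$, giving $N_{\mS_r}(\mS_\rho)=\prod_i(\mS_{p^i}\wr\mS_{r_i})$ and hence $N_{\mS_r}(\mS_\rho)/\mS_\rho\cong\prod_i\mS_{r_i}=\mS_\beta$. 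The equality $N_{\mS_r}(\mS_\rho)=N_{\mS_r}(P)\mS_\rho$ is then a Frattini argument: for $g\in N_{\mS_r}(\mS_\rho)$ both $P$ and $P^g$ are Sylow $p$-subgroups of $\mS_\rho$, so $P^g=P^h$ for some $h\in\mS_\rho$ and $gh^{-1}\in N_{\mS_r}(P)$. Applying the second isomorphism theorem to $N_{\mS_r}(P)\le N_{\mS_r}(\mS_\rho)$, together with $N_{\mS_r}(P)\cap\mS_\rho=N_{\mS_\rho}(P)$, then yields $N_{\mS_r}(P)/N_{\mS_\rho}(P)\cong\mS_\beta$.

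For part (b) I would use that $M^\lambda(P)$ has $k$-basis the $P$-fixed $\lambda$-tabloids, with $N_{\mS_r}(P)/P$ acting, and that $Y^\lambda(P)$ is a direct summand. A $\lambda$-tabloid is fixed by $P$ exactly when each $P$-orbit, that is, each $\rho$-block, is contained in a single row. Since $N_{\mS_\rho}(P)\le\mS_\rho$ preserves every block setwise, it keeps the points of each block in the same row and so fixes every such tabloid; hence $N_{\mS_\rho}(P)/P$ acts trivially on $M^\lambda(P)$ and on its summand $Y^\lambda(P)$. To see the action descends to $\mS_\beta$ I would note that any $g\in N_{\mS_r}(P)$ permutes the $P$-orbits and therefore conjugates $\mS_\rho$, a product of symmetric groups on those orbits, to itself; thus $N_{\mS_r}(P)\le N_{\mS_r}(\mS_\rho)$, so $N_{\mS_\rho}(P)=N_{\mS_r}(P)\cap\mS_\rho$ is normal in $N_{\mS_r}(P)$ and the action factors through $\mS_\beta$ by part (a).

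For part (c), $P$ is a vertex of $Y^\lambda$ by Grabmeier's result (\cite[4.7]{grabmeier}), so by the Broué correspondence $Y^\lambda(P)$ is an indecomposable projective $k[N_{\mS_r}(P)/P]$-module. By (b) it is inflated from $\mS_\beta$, and since $N_{\mS_\rho}(P)/P$ is a normal $p'$-subgroup (its order is coprime to $p$ because $P$ is Sylow in $\mS_\rho$), applying the $N_{\mS_\rho}(P)/P$-coinvariants functor, which is exact and carries the regular $k[N_{\mS_r}(P)/P]$-module to the regular $k\mS_\beta$-module, shows that $Y^\lambda(P)$ is an indecomposable projective $k\mS_\beta$-module. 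It therefore has the form $\boxtimes_i Y^{\sigma(i)}$ with each $\sigma(i)$ a $p$-restricted partition of $r_i$, and the remaining task is to show $\sigma(i)=\lambda(i)$.

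This identification is the step I expect to be the main obstacle. I would compute $M^\lambda(P)$ explicitly as an $\mS_\beta$-permutation module: its basis of $P$-fixed tabloids breaks into $\mS_\beta$-orbits indexed by the matrices $N=(n_{ij})$ recording how many size-$p^i$ blocks lie in row $j$, subject to $\sum_i n_{ij}p^i=\lambda_j$ and $\sum_j n_{ij}=r_i$, giving
\[M^\lambda(P)\cong\bigoplus_{N}\;\boxtimes_i\,M^{(n_{i1},\dots,n_{i,l(\lambda)})}_{\mS_{r_i}}.\]
The matrix $N^\ast$ with $n^\ast_{ij}=\lambda(i)_j$, coming from the $p$-adic identity $\lambda_j=\sum_i\lambda(i)_j\,p^i$, contributes the term $\boxtimes_i M^{\lambda(i)}$, which contains $\boxtimes_i Y^{\lambda(i)}$ as a summand; hence $\boxtimes_i Y^{\lambda(i)}$ is a summand of $M^\lambda(P)$ and, by the Broué correspondence, equals $Y^\kappa(P)$ for the unique $\kappa\trianglerighteq\lambda$ with $Y^\kappa\mid M^\lambda$. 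To pin down $\kappa=\lambda$ I would run a downward induction on the dominance order over the partitions with Young vertex $\mS_\rho$: using the unitriangularity of the Young-module multiplicities (Theorem~\ref{thm:Youngs}(c)) on both $M^\lambda$ and the displayed decomposition, one matches each projective summand $\boxtimes_i Y^{\mu(i)}$ with the partition $\sum_i\mu(i)p^i$ and checks that $\boxtimes_i Y^{\lambda(i)}$ occurs with total multiplicity one and cannot be the correspondent of any $\kappa\triangleright\lambda$. This dominance bookkeeping is the delicate point; by comparison the inputs (a), (b) and the permutation decomposition are routine.
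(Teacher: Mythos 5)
The paper never proves this theorem: it is stated as background and attributed to Erdmann \cite{ErdmannYoung}, so there is no internal proof to compare yours against. Judged on its own, your proposal is correct, and it is in effect a reconstruction of Erdmann's argument. Parts (a) and (b) are the standard facts, proved the standard way: the wreath-product description $N_{\mS_r}(\mS_\rho)\cong\prod_i(\mS_{p^i}\wr\mS_{r_i})$, the Frattini argument, and the observation that every element of $\mS_\rho$ fixes each $P$-fixed tabloid (this last point is exactly the computation the paper itself carries out in the discussion after the theorem, where it regards $M^\lambda(P)$ as an $\mS_\beta$-module). For (c), your matrix-indexed decomposition of $M^\lambda(P)$ is precisely Erdmann's Proposition 1, which the paper's Remark quotes, and your passage from projectivity over $k[N_{\mS_r}(P)/P]$ to projectivity over $k\mS_\beta$ via the central idempotent attached to the normal $p'$-subgroup $N_{\mS_\rho}(P)/P$ is sound. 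The step you flag as the main obstacle does close along the lines you indicate: from the matrix decomposition, any projective summand $\boxtimes_i Y^{\tau(i)}$ of $M^\lambda(P)$ comes from a matrix $(n_{ij})$ with $\tau(i)\trianglerighteq(n_{i1},n_{i2},\dots)$, and summing the partial-sum inequalities weighted by $p^i$ gives $\sum_i p^i\tau(i)\trianglerighteq\lambda$, with equality forcing $n_{ij}=\lambda(i)_j$; hence $\boxtimes_i Y^{\lambda(i)}$ occurs in $M^\lambda(P)$ with multiplicity exactly one, and its Brou\'e correspondent is a Young module $Y^\kappa$ with $\kappa\trianglerighteq\lambda$ and Young vertex $\mS_\rho$. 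Your downward induction then rules out $\kappa\triangleright\lambda$: for such $\kappa$ the inductive hypothesis gives $Y^\kappa(P)\cong\boxtimes_i Y^{\kappa(i)}$, which differs from $\boxtimes_i Y^{\lambda(i)}$ by uniqueness of $p$-adic expansions, so $\kappa=\lambda$. The only tightening I would ask for is to state this dominance lemma and the induction bookkeeping explicitly rather than gesturally; there is no missing idea.
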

Part (a) of the above theorem is proved by the Frattini argument, whilst part (c) is the main result.  We briefly discuss now part (b), following the approach of Erdmann: 

Let $\mu\vdash r$, with $p$-adic expansion $\sum_{i=0}^t\mu(i)p^i$.  Let $\rho$ be the $p$-partition such that $\mS_\rho$ is a Young vertex of $Y^\mu$, and $P$ a Sylow $p$-subgroup of $\mS_\rho$.  A permutation basis for $M^\lambda$ can be taken as the set of all $\lambda$-tabloids.  That is, the set of row equivalence classes of $\lambda$-tableaux.  We use the standard notation as described in \cite[\S 3]{jameslecture}.  As such, we may identify $M^\lambda(P)$ with the linear span of the $\lambda$-tabloids which are fixed by $P$. A $\lambda$-tabloid is fixed by $P$ if, and only if, each row is a union of $P$-orbits. 
  Since the $\mS_\rho$ orbits on $\{1,\dots,r\}$ coincide with the $P$-orbits, it follows that each $\lambda$-tabloid which is fixed by $P$ is also fixed by $\mS_\rho$.  Thus, $\mS_\rho$ acts trivially on $M^\lambda(P)$, and the structure of $M^\lambda(P)$ as a module for $N_{\mS_r}(P)/P$ is the same as the structure of $M^\lambda(P)$ as a module for $N_{\mS_r}(\mS_\rho)/N_{\mS_\rho}(P)\cong\mS_\beta$ (where $\beta$ is given as in Theorem \ref{ErdmannBroue}).

Throughout this section we make use of the Brou\'e correspondence to determine $p$-Kostka numbers.  Recall that by Theorem \ref{thm:Brouecorrespondence}, if $\lambda,\mu\vdash r$ and $P$ is a vertex of $Y^\mu$, then \begin{equation}[M^\lambda:Y^\mu]=[M^\lambda(P):Y^\mu(P)].\end{equation}  Thus, we must understand the Brauer quotient of the Young permutation modules with respect to Sylow $p$-subgroups of a Young subgroup $\mS_\nu$ where $\nu$ is a $p$-partition.  
 Since $\mS_\beta$ acts by permuting the $\mS_\rho$-orbits of the same length amongst themselves, $M^\lambda(P)$ is a permutation module for $\mS_\beta$.  It follows from this construction that $M^\lambda(P)$ is a direct sum of outer tensor products of Young permutation modules
\[M^{\gamma^{(0)}}\boxtimes\dots\boxtimes M^{\gamma^{(s)}}\] for some refinements $\gamma=\gamma^{(0)}\bullet\dots\bullet\gamma^{(s)}$ of $\beta$.

Assume $M^\lambda(P) \cong \bigoplus_{\gamma} M^{\gamma^{(0)}}\boxtimes\dots\boxtimes M^{\gamma^{( s)}}$, as a module for $\mS_\beta$, where the sum runs over some refinements $\gamma$ of $\beta$. From
$[M^\lambda:Y^\mu]=[M^\lambda(P):Y^\mu(P)]$, and the form for $Y^\mu(P)$ in Theorem \ref{ErdmannBroue}, follows:
\begin{equation}[M^\lambda(P):Y^\mu(P)]
=\sum_{\gamma}\prod_{i=0}^s[M^{\gamma^{(i)}}:Y^{\mu(i)}].\label{eqn:decompMY}
\end{equation}

 Thus, provided one knows the $p$-Kostka numbers corresponding to projective Young modules for all partitions of degree less than $r$, then the Brauer construction and the Brou{\'e} correspondence is sufficient to determine the non-projective $p$-Kostka numbers in degree $r$.  
\begin{remark}
In fact, Erdmann (\cite[Proposition 1]{ErdmannYoung}) describes $M^\lambda(P)$ explicitly as the direct sum \[M^\lambda(P)\cong \bigoplus_{\gamma}M^{\gamma^{(0)}}\boxtimes\dots\boxtimes M^{\gamma^{(s)}},\] where the sum is over all refinements $\gamma=\gamma^{(0)}\bullet\dots\bullet\gamma^{(s)}$ of $\beta$ where $\gamma^{(i)}$ is a composition of $r_{i}$ for each $i$, and where $\sum_{i=0}^s\gamma^{(i)}p^i=\lambda$.  Substituting this expression into (\ref{eqn:decompMY}), one obtains the Klyachko multiplicity formula.  It follows that many of the results proved in this section can be proved in a similar way using the Klyachko formula, or by considering weight spaces for simple modules for the Schur algebra and the Steinberg Tensor Product Theorem (see the proof of the Klyachko formula in \cite{MartinSchurAlgebras}).  In fact, it is possible to rephrase nearly all the proofs of this section into this language.  However, in some cases, we find it simplifies certain aspects of the combinatorics to work with the Brauer construction, and we prove these results using only the representation theory of the symmetric groups.
\end{remark}

The object of this is to provide a method for calculating $p$-Kostka numbers.  An example will be illuminating.
  Recall that every integer $r$ has a $p$-adic expansion, and we write this as $r=\sum_{i=0}^\infty \lfloor r\rfloor_i p^i$.

\begin{example}\label{ex:r-2,1,1}
We consider the case $p=2$ and want to calculate a direct sum decomposition of  $M^{(r-2,1,1)}$ for $r$ even.  We may assume $r\geq 4$. Recall that \[M^{(r-2,1,1)}\cong Y^{(r-2,1,1)}\oplus \bigoplus_{\mu\triangleright (r-2,1,1)}[M^{(r-2,1,1)}:Y^\mu]Y^\mu.\]  The dominance lattice for partitions $\mu\trianglerighteq(r-2,1,1)$ is as follows:
\[(r)\triangleright(r-1,1)\triangleright(r-2,2)\triangleright(r-2,1,1).\]
 Thus, to determine the decomposition of $M^{(r-2,1,1)}$ we need only concern ourselves with the multiplicities of the Young modules corresponding to the partitions $(r-2,2)$, $(r-1,1)$ and $(r)$.    We first calculate  $[M^{(r-2,1,1)}:Y^{(r-1,1)}]$.  The $2$-adic expansion of $(r-1,1)$ is \[(r-1,1)=(1,1)+\sum_{i=1}^t(\lfloor r-2\rfloor_{i})2^i.\]  
 
 It follows that a Young vertex of $Y^{(r-1,1)}$ is of the form $\mS_\rho$ where $\rho$ is a $2$-partition, having exactly two parts equal to $1$. 
Let $P$ be a Sylow $2$-subgroup of $\mS_\rho$, then $P$ is a vertex of $Y^{(r-1,1)}$, and the set consisting of the tabloids  \[\youngtabloid(1\dots {r-2},{r-1},r),\;\;\;\;\youngtabloid(1\dots {r-2},r,{r-1})\]
is a basis for $M^{(r-2,1,1)}(P)$ over $k$.  The structure of $M^{(r-2,1,1)}(P)$ as a $N_{\mS_r}(P)/P$-module is the same as it has as a module for $N_{\mS_r}(P)/N_{\mS_\rho}(P)$.   This group is isomorphic to $\mS_{(2,1^{t-2})}\cong C_2$.  Under this identification a generator of $C_2$ acts by permuting the entries $r-1$ and $r$ and fixing entries $1,\dots,r-2$ in the tabloids above. Thus, $M^{(r-2,1,1)}(P)\cong k\mS_{(2,1^{t-2})}\cong Y^{(1,1)}\boxtimes Y^{(1)}\boxtimes\dots\boxtimes Y^{(1)}$ as a module for $\mS_{(2,1^{t-2})}$.  It follows by the Brou\'{e} correspondence that $M^{(r-2,1,1)}$ has exactly one indecomposable summand with vertex $P$, namely $Y^{(r-1,1)}$, and $[M^{(r-2,1,1)}:Y^{(r-1,1)}]=1$. 

  Next, let $\nu$ be the $2$-partition such that $\mS_\nu$ is a Young vertex of $Y^{(r-2,2)}$.  Since all parts of $(r-2,2)$ are divisible by $2$, it follows that all parts of $\nu$ are divisible by $2$. Hence there are no $(r-2,1,1)$-tabloids which are fixed by the action of $\mS_\nu$.  It follows that if $Q$ is a Sylow $2$-subgroup of $\mS_\nu$ then $M^{(r-2,1,1)}(Q)=0$, and hence $M^{(r-2,1,1)}$ has no summands with vertex $Q$.  In particular $[M^{(r-2,1,1)}:Y^{(r-2,2)}]=0$.  An identical line of reasoning proves that $[M^{(r-2,1,1)}:Y^{(r)}]=0$.  Hence we have determined the direct sum decomposition: \[M^{(r-2,1,1)}\cong Y^{(r-2,1,1)}\oplus Y^{(r-1,1)}.\] 
\end{example}

Some of the above argument generalises as follows:

\begin{lemma}
Let $r$ be a positive integer divisible by $p$.  If $\lambda$ is a partition of $r$ not divisible by $p$, and $\mu$ is a partition of $r$ divisible by $p$, then $[M^\lambda:Y^\mu]=0$. 
\end{lemma}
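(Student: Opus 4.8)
The plan is to generalise the computation carried out in Example~\ref{ex:r-2,1,1}, replacing the special partitions there by arbitrary $\lambda,\mu$ satisfying the divisibility hypotheses. First I would fix the $p$-partition $\rho$ for which $\mS_\rho$ is a Young vertex of $Y^\mu$, and let $P$ be a Sylow $p$-subgroup of $\mS_\rho$; by Grabmeier's theorem $P$ is then a vertex of $Y^\mu$, so by the Broué correspondence (Theorem~\ref{thm:Brouecorrespondence}) we have $[M^\lambda:Y^\mu]=[M^\lambda(P):Y^\mu(P)]$. Since $Y^\mu(P)$ is a nonzero projective $\mS_\beta$-module, it suffices to prove the stronger statement that the Brauer quotient $M^\lambda(P)$ is itself zero.

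The heart of the matter is a divisibility statement about the vertex: I claim that because $\mu$ is divisible by $p$, every part of $\rho$ is divisible by $p$. Recall that $\rho$ has exactly $r_i=|\mu(i)|$ parts equal to $p^i$, where $\mu=\sum_i\mu(i)p^i$ is the $p$-adic expansion with each $\mu(i)$ a $p$-restricted partition. Writing $\mu=p\nu$, every part of $\mu$ is divisible by $p$, and one checks that this forces the $p$-restricted partition $\mu(0)$ to have all of its parts divisible by $p$. But a $p$-restricted partition whose parts are all multiples of $p$ must be empty: its last part is both strictly less than $p$ and a multiple of $p$, hence $0$, and then the condition that successive differences are strictly less than $p$ propagates upward to kill every part. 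Thus $r_0=|\mu(0)|=0$, so $\rho$ has no parts equal to $1$ and every part is of the form $p^i$ with $i\geq 1$, hence divisible by $p$.

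To conclude I would use, as in the excerpt, that $M^\lambda(P)$ is the linear span of the $\lambda$-tabloids fixed by $P$, and that the $P$-orbits on $\{1,\dots,r\}$ coincide with the $\mS_\rho$-orbits, whose sizes are exactly the parts of $\rho$, all now divisible by $p$. A $\lambda$-tabloid fixed by $P$ must have each of its rows equal to a union of $P$-orbits, so each part $\lambda_j$ would be a sum of orbit sizes and therefore divisible by $p$; this contradicts the hypothesis that $\lambda$ is not divisible by $p$, i.e.\ that some part of $\lambda$ is not a multiple of $p$. Hence no $\lambda$-tabloid is fixed by $P$, giving $M^\lambda(P)=0$ and therefore $[M^\lambda:Y^\mu]=[M^\lambda(P):Y^\mu(P)]=0$.

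The only step requiring genuine care is the divisibility claim for $\rho$; once that is in hand, the rest is a direct abstraction of the orbit-counting argument already used for $[M^{(r-2,1,1)}:Y^{(r-2,2)}]$ in Example~\ref{ex:r-2,1,1}. The main obstacle is therefore to transfer divisibility from $\mu$ to the orbit sizes recorded by its vertex, which is exactly the observation that divisibility of $\mu$ by $p$ forces the $i=0$ term of its $p$-adic expansion to vanish.
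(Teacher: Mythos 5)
Your proof is correct and takes essentially the same route as the paper: pass to the Brauer quotient at a Sylow $p$-subgroup $P$ of the Young vertex $\mS_\rho$ of $Y^\mu$, note that every part of $\rho$ is divisible by $p$, conclude that no $\lambda$-tabloid is fixed by $P$, so $M^\lambda(P)=0$ and hence $[M^\lambda:Y^\mu]=0$ by the Brou\'e correspondence. The only difference is that you explicitly justify the divisibility of the parts of $\rho$ (via the vanishing of $\mu(0)$ in the $p$-adic expansion), a step the paper asserts without proof.
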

\begin{proof}
Since $p$ divides every part of $\mu$, it follows that if $\rho$ is a $p$-partition of $r$ such that $\mS_\rho$ is a Young vertex of $Y^\mu$, then $p$ divides every part of $\rho$.  Let $P$ be a Sylow $p$-subgroup of $\mS_\rho$, then $P$ is a vertex of $Y^\mu$.  Since the orbits of $P$ all have length divisible by $p$, it follows that no $\lambda$-tabloid is fixed by $P$.  Thus, $M^\lambda(P)=0$, and in particular, $[M^\lambda:Y^\mu]=0$. 
\end{proof}

Next, we investigate the effect of multiplying partitions by $p$ on the $p$-Kostka numbers.
\begin{prop}\label{prop:multiplyingbyp}
Let $r\in\N$ and let $\mu,\nu\vdash r$ such that $\mS_\nu$ is a Young vertex of $Y^\mu$.  Then $\mS_{p\nu}$ is a Young vertex of $Y^{p\mu}$, and if $P_\nu$ and $P_{p\nu}$ are Sylow $p$-subgroups of $\mS_\nu$ and $\mS_{p\nu}$ respectively, then there exists a composition $\beta$ such that \[N_{\mS_r}(P_\nu)/N_{\mS_{\nu}}(P_\nu)\cong \mS_\beta\cong N_{\mS_{pr}}(P_{p\nu})/N_{\mS_{p\nu}}(P_{p\nu}).\]
Under this identification, the following hold:
\begin{enumerate}[(a)]
\item $Y^\mu(P_\nu)\cong Y^{\mu(0)}\boxtimes \dots\boxtimes Y^{\mu(s)}\cong Y^{p\mu}(P_{p\nu})$ as modules for $\mS_\beta$.
\item Let $\lambda\vdash r$, then $M^\lambda(P_\nu)\cong M^{p\lambda}(P_{p\nu})$ as modules for $\mS_\beta$.
\end{enumerate}
\end{prop}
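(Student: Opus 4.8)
The plan is to prove the three assertions in order, using the explicit description of Young vertices due to Grabmeier together with Erdmann's Theorem~\ref{ErdmannBroue}. The key observation driving everything is that multiplying a partition by $p$ shifts its $p$-adic expansion: if $\mu=\sum_{i=0}^s\mu(i)p^i$ is the $p$-adic expansion of $\mu$, then $p\mu=\sum_{i=0}^s\mu(i)p^{i+1}$ is the $p$-adic expansion of $p\mu$, since each $\mu(i)$ remains $p$-restricted. Consequently, writing $r_i=|\mu(i)|$, the Young vertex data for $Y^\mu$ and $Y^{p\mu}$ are related by a shift: if $\rho$ is the $p$-partition with $r_i$ parts equal to $p^i$ (so $\mS_\rho=\mS_\nu$ is a Young vertex of $Y^\mu$ by Grabmeier), then the $p$-partition with $r_i$ parts equal to $p^{i+1}$, which is exactly $p\rho=p\nu$, is a Young vertex of $Y^{p\mu}$. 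This gives the first claim $\mS_{p\nu}$ is a Young vertex of $Y^{p\mu}$.

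First I would pin down the common quotient group $\mS_\beta$. By Theorem~\ref{ErdmannBroue}(a) applied to $Y^\mu$, one has $N_{\mS_r}(\mS_\nu)/\mS_\nu\cong N_{\mS_r}(P_\nu)/N_{\mS_\nu}(P_\nu)\cong\mS_\beta$ where $\beta=(r_0,r_1,\dots,r_s)$. The point is that $\beta$ records only the \emph{multiplicities} $r_i=|\mu(i)|$ of the part-sizes, not the part-sizes themselves. Applying the same theorem to $Y^{p\mu}$, whose Young vertex $\mS_{p\nu}$ has exactly $r_i$ parts equal to $p^{i+1}$, one gets the same composition $\beta$ of multiplicities, hence $N_{\mS_{pr}}(P_{p\nu})/N_{\mS_{p\nu}}(P_{p\nu})\cong\mS_\beta$ with the \emph{same} $\beta$. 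This establishes the displayed isomorphism and fixes the identification under which (a) and (b) are to be read.

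Part~(a) is then essentially immediate from Theorem~\ref{ErdmannBroue}(c): the Brauer quotient $Y^\mu(P_\nu)$ is the outer tensor product $Y^{\mu(0)}\boxtimes\dots\boxtimes Y^{\mu(s)}$, indexed by the components of the $p$-adic expansion of $\mu$. Since the nonzero components of the $p$-adic expansion of $p\mu$ are the very same partitions $\mu(0),\dots,\mu(s)$ (merely attached to higher $p$-powers), Theorem~\ref{ErdmannBroue}(c) gives $Y^{p\mu}(P_{p\nu})\cong Y^{\mu(0)}\boxtimes\dots\boxtimes Y^{\mu(s)}$ as well, and the two agree as $\mS_\beta$-modules under the identification above. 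For part~(b) I would argue directly with the tabloid bases. Following the recipe used in Example~\ref{ex:r-2,1,1} and the discussion preceding the lemma, $M^\lambda(P_\nu)$ has $k$-basis the $\lambda$-tabloids fixed by $P_\nu$, equivalently those whose rows are unions of $P_\nu$-orbits, which coincide with the $\mS_\nu$-orbits. The combinatorial content is that placing $\mS_\nu$-orbits (of sizes $p^i$, with $r_i$ orbits of size $p^i$) into the rows of a $\lambda$-tableau so that each row is filled, is the \emph{same} bookkeeping problem as placing $\mS_{p\nu}$-orbits (of sizes $p^{i+1}$, again $r_i$ of size $p^{i+1}$) into the rows of a $p\lambda$-tableau, because every row-length of $p\lambda$ is $p$ times the corresponding row-length of $\lambda$ and every orbit-size is scaled by $p$. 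I would make this precise by exhibiting an explicit bijection between the two sets of fixed tabloids that is equivariant for the common $\mS_\beta$-action (which permutes equal-sized orbits), yielding the claimed isomorphism of permutation modules $M^\lambda(P_\nu)\cong M^{p\lambda}(P_{p\nu})$.

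The main obstacle I anticipate is part~(b): making the orbit-counting bijection genuinely $\mS_\beta$-equivariant rather than merely a bijection of basis sets. The $\mS_\beta$-action permutes orbits of equal size among themselves, so one must check that the scaling $p^i\mapsto p^{i+1}$ carries the $\mS_\beta$-action on $M^\lambda(P_\nu)$ to the $\mS_\beta$-action on $M^{p\lambda}(P_{p\nu})$ compatibly with the identification of quotient groups from the first part. Once the bijection is set up so that the $r_i$ orbits of size $p^i$ correspond to the $r_i$ orbits of size $p^{i+1}$ in a way respecting which orbit lands in which row, equivariance should follow formally, but it requires care to avoid an off-by-one confusion between the index $i$ of the $p$-power and the block $\beta_{i+1}$ of the composition $\beta$.
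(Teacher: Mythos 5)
Your proposal is correct and follows essentially the same route as the paper's proof: the shifted $p$-adic expansion plus Grabmeier's description gives the Young vertex $\mS_{p\nu}$, Erdmann's Theorem \ref{ErdmannBroue} gives the common $\mS_\beta$ and part (a), and part (b) is exactly the paper's orbit-for-orbit bijection between the fixed $\lambda$-tabloids and the fixed $p\lambda$-tabloids. The equivariance issue you flag is resolved just as you suggest (and as the paper does in one line): since $\mS_\beta$ only permutes orbits of equal length amongst themselves and the correspondence $O_j\leftrightarrow\bar{O}_j$ matches the size-$p^i$ orbits with the size-$p^{i+1}$ orbits, the bijection of bases is automatically $\mS_\beta$-equivariant.
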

\begin{proof}
If $\mu=\sum_{i=0}^s\mu(i)p^i$ is the $p$-adic expansion of $\mu$, then the $p$-adic expansion of $p\mu$ is $\sum_{i=0}^s\mu(i)p^{i+1}$.  Thus, if $\nu$ is the partition with $|\mu(i)|$ parts equal to $p^i$ then $\mS_\nu$ is a Young vertex of $Y^\mu$ and $\mS_{p\nu}$ is a Young vertex of $Y^{p\mu}$.  
  
Let $P_\nu$ and $P_{p\nu}$ be Sylow $p$-subgroups of $\mS_{\nu}$ and $\mS_{p\nu}$ respectively.  If $\beta:=(|\mu(0)|,|\mu(1)|,\dots,|\mu(s)|)$, then by Theorem \ref{ErdmannBroue}(a),
\begin{align*} N_{\mS_r}(P_\nu)/N_{\mS_\nu}(P_\nu)&\cong N_{\mS_r}(\mS_\nu)/\mS_\nu\cong \mS_\beta\\
\intertext{ and similarly}  N_{\mS_{pr}}(P_{p\nu})/N_{\mS_{p\nu}}(P_{p\nu})&\cong N_{\mS_{pr}}(\mS_{p\nu})/\mS_{p\nu}\cong \mS_\beta.\end{align*}
Statement $(a)$ follows directly from the expression for the $p$-adic expansion of $p\mu$, and  Theorem \ref{ErdmannBroue}.

  Recall that there is a basis $\B_\lambda$  of $M^\lambda(P_\nu)$ consisting of the $\lambda$-tabloids with rows given by unions of $\mS_\nu$-orbits.  Similarly there is a basis $\B_{p\lambda}$ of $M^{p\lambda}(P_{p\nu})$ consisting of the $p\lambda$-tabloids with rows given by unions of $\mS_{p\nu}$-orbits. Both $M^\lambda(P_\nu)$ and $M^{p\lambda}(P_{p\nu})$ are modules for $k\mS_\beta$, with the corresponding action of $\mS_\beta$ on tabloids given by extending the action of $\mS_\beta$ on the $\mS_\nu$ and $\mS_{p\nu}$ orbits respectively. 
If $l(\nu)=s$ then we label the $\mS_\nu$ orbits (respectively $\mS_{p\nu}$-orbits) $O_1,\dots,O_s$ (respectively $\bar{O}_1,\dots,\bar{O}_s$), so $O_i=\{\sum_{j=1}^i\nu_j+1,\dots,\sum_{j=1}^{i+1}\nu_{j}\}$ (respectively $\bar{O}_i=\{\sum_{j=1}^i(p\nu_i)+1,\dots,\sum_{j=1}^{i+1}(p\nu_j)\}$).  Let $x$ be a tabloid in $\B_\lambda$.  If the $i$th row of $x$ consists of $\bigcup_{j=1}^mO_{i_j}$ for each $i$, then there is a unique $p\lambda$-tabloid $\bar{x}\in\B_{p\lambda}$ with $i$th row given by $\bigcup_{j=1}^m\bar{O}_{i_j}$ for each $i$.  This defines a one-to-one correspondence between $\B_\lambda$ and $\B_{p\lambda}$.  Since $\mS_\beta$ permutes orbits of the same length amongst themselves, this correspondence induces an isomorphism $M^\lambda(P_\nu)\cong M^{p\lambda}(P_{p\nu})$ of $\mS_\beta$-modules.
\end{proof}

\begin{proof}[Proof of Theorem \ref{thm:multbyp}]
We follow the setup of Proposition \ref{prop:multiplyingbyp}.  Then \[[M^{p\lambda}:Y^{p\mu}]=[M^{p\lambda}(P_{p\nu}):Y^{p\mu}(P_{p\nu})]=[M^{\lambda}(P_\nu):Y^\mu(P_\nu)]=[M^\lambda:Y^\mu],\]
where the middle equality follows from the isomorphisms (as $k\mS_\beta$-modules) in Proposition \ref{prop:multiplyingbyp}.
\end{proof}

We compare now multiplicities of direct summands of $M^{\lambda+p^n\alpha}$ with multiplicities of direct summands of $M^\lambda$ and $M^\alpha$, for partitions $\lambda$ and $\alpha$.
\begin{prop}\label{prop:rpluspna}
Let $a,r\in\N$, and let $\mu\vdash r$, $\delta\vdash a$.  Suppose $\mu$ has $p$-adic expansion $\sum_{i=0}^s\mu(i)p^i$ and let $n>s$.  Let $\rho,\gamma$ be partitions such that $\mS_\rho,\mS_\gamma$ are Young vertices of $Y^\mu,Y^\delta$ respectively.  Assume also that $N_{\mS_r}(\mS_\rho)/\mS_\rho\cong \mS_\beta$, and $N_{\mS_a}(\mS_\gamma)/\mS_\gamma\cong \mS_\eta$ where $\beta$ and $\eta$ are given by Theorem \ref{ErdmannBroue}.
\begin{enumerate}[(a)]
\item If $\nu=\rho\bullet(p^n\gamma)$, then $\mS_\nu$ is a Young vertex of $Y^{\mu+p^n\delta}$ and \[N_{\mS_{r+ap^n}}(\mS_{\nu})/\mS_\nu\cong \mS_{\beta\bullet\eta}\cong\mS_\beta\times\mS_\eta;\]
\item Let $P_\rho$ and $P_{p^n\gamma}$ be Sylow $p$-subgroups of $\mS_\rho$ and $\mS_{p^n\gamma}$ respectively, with disjoint supports.  Then $P_\nu:=P_\rho\times P_{p^n\gamma}$ is a Sylow $p$-subgroup of $\mS_\nu$, and \[Y^{\mu+p^n\delta}(P_\nu)\cong  Y^{\mu}(P_\rho)\boxtimes Y^{p^n\delta}(P_{p^n\gamma})\;{\rm as}\;{\rm a}\;{\rm module}\;{\rm for}\; \mS_\beta\times\mS_\eta;\]
\item Let $\lambda\vdash r$ and $\alpha\vdash a$. There exists a direct summand $N$ of $M^{\lambda+p^n\alpha}(P_\nu)$ such that $N\cong M^\lambda(P_\rho)\boxtimes M^{p^n\alpha}(P_{p^n\gamma})$ as modules for $\mS_\beta\times\mS_\eta$.  

Moreover, if $p^n>\lambda_1$ then $M^{\lambda+p^n\alpha}(P_\nu)\cong M^\lambda(P_\rho)\boxtimes M^{p^n\alpha}(P_{p^n\gamma})$ as modules for $\mS_\beta\times\mS_\eta$.
\end{enumerate}
\end{prop}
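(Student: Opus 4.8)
The plan is to treat (a) and (b) as bookkeeping with the $p$-adic expansion and to concentrate the real work on (c). First I would record the combinatorial fact underlying everything: since $n>s$, the power $p^n$ strictly exceeds every power of $p$ occurring in the expansion of $\mu$. Hence the expansions of $\mu$ (supported in degrees $0,\dots,s$) and of $p^n\delta=\sum_j\delta(j)p^{n+j}$ (supported in degrees $\ge n$) use disjoint exponents, so by uniqueness of the $p$-adic expansion the expansion of $\mu+p^n\delta$ is their concatenation: $(\mu+p^n\delta)(i)=\mu(i)$ for $i\le s$, $(\mu+p^n\delta)(i)=0$ for $s<i<n$, and $(\mu+p^n\delta)(n+j)=\delta(j)$. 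Grabmeier's recipe for the Young vertex then produces exactly $\nu=\rho\bullet(p^n\gamma)$, and Theorem \ref{ErdmannBroue}(a), after discarding the zero parts in degrees $s<i<n$, gives $N_{\mS_{r+ap^n}}(\mS_\nu)/\mS_\nu\cong\mS_{\beta\bullet\eta}\cong\mS_\beta\times\mS_\eta$; this is (a). For (b) I would apply Theorem \ref{ErdmannBroue}(c): the outer tensor product $Y^{(\mu+p^n\delta)(0)}\boxtimes\cdots$ has trivial factors $Y^{\emptyset}$ in the vanishing degrees, and regrouping the surviving factors according to $\mS_{\beta\bullet\eta}\cong\mS_\beta\times\mS_\eta$ identifies the low block with $Y^\mu(P_\rho)$ and the high block with $Y^{p^n\delta}(P_{p^n\gamma})$, again by Theorem \ref{ErdmannBroue}(c) applied to $\mu$ and to $p^n\delta$ separately.

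For (c) I would work with the explicit permutation basis of the Brauer quotient. As explained before the statement, $M^{\lambda+p^n\alpha}(P_\nu)$ has $k$-basis the $(\lambda+p^n\alpha)$-tabloids each of whose rows is a union of $\mS_\nu$-orbits (recall $P_\nu$ and $\mS_\nu$ have the same orbits, as $\nu$ is a $p$-partition), and $\mS_\beta\times\mS_\eta$ acts by permuting orbits of equal length among themselves. The decisive observation is that the $\mS_\nu$-orbits fall into two disjoint size-classes: the \emph{small} orbits coming from $\rho$ have sizes $p^0,\dots,p^s<p^n$ and fill $\{1,\dots,r\}$, while the \emph{large} orbits coming from $p^n\gamma$ have sizes $p^{n},p^{n+1},\dots\ge p^n$ and fill $\{r+1,\dots,r+ap^n\}$. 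Because these size-classes do not overlap, every element of $\mS_\beta\times\mS_\eta$ sends small orbits to small and large to large, with $\mS_\beta$ acting only on the former and $\mS_\eta$ only on the latter. Call a basis tabloid \emph{clean} if row $k$ contains small orbits of total size exactly $\lambda_k$ and large orbits of total size exactly $p^n\alpha_k$. Cleanness is preserved by the group, so the basis partitions into clean and non-clean $\mS_\beta\times\mS_\eta$-stable subsets, and the span $N$ of the clean tabloids is therefore a direct summand. Splitting each clean tabloid into its small part (a $P_\rho$-fixed $\lambda$-tabloid on $\{1,\dots,r\}$) and its large part (a $P_{p^n\gamma}$-fixed $p^n\alpha$-tabloid on $\{r+1,\dots,r+ap^n\}$) gives an $\mS_\beta\times\mS_\eta$-equivariant bijection of bases, whence $N\cong M^\lambda(P_\rho)\boxtimes M^{p^n\alpha}(P_{p^n\gamma})$.

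It remains to prove the ``moreover'': when $p^n>\lambda_1$ every basis tabloid is clean, so $N$ is all of $M^{\lambda+p^n\alpha}(P_\nu)$. Here I would argue by a counting identity. Fix a basis tabloid and let $c_k,d_k$ be the small and large content of row $k$, so $c_k+d_k=\lambda_k+p^n\alpha_k$; as every large orbit has size divisible by $p^n$ we may write $d_k=p^nm_k$ with $m_k\in\N$, whence $c_k=\lambda_k+p^n(\alpha_k-m_k)\ge 0$. If $m_k>\alpha_k$ then $c_k\le\lambda_k-p^n<0$ since $\lambda_k\le\lambda_1<p^n$, a contradiction; hence $m_k\le\alpha_k$ for all $k$. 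Summing and using that the large orbits account for all $ap^n$ large letters gives $\sum_k m_k=a=\sum_k\alpha_k$, so $\sum_k(\alpha_k-m_k)=0$ with every summand non-negative, forcing $m_k=\alpha_k$ and $c_k=\lambda_k$ for every $k$. Thus every tabloid is clean, completing (c).

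The routine parts are the $p$-adic bookkeeping in (a) and (b); the main obstacle is the separation argument in (c). The key idea that makes it go through is that $n>s$ forces the orbit sizes contributed by $\rho$ and by $p^n\gamma$ to lie in disjoint ranges, which is precisely what lets the group action respect the small/large dichotomy, and what powers both the direct-summand splitting in general and the counting identity $m_k=\alpha_k$ under the hypothesis $p^n>\lambda_1$.
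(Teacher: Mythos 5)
Your proof is correct and takes essentially the same approach as the paper: (a) and (b) via the concatenated $p$-adic expansion together with Theorem \ref{ErdmannBroue}, and (c) via the orbit-size dichotomy on the fixed-tabloid basis, where your ``clean'' tabloids are precisely the image of the paper's embedding of $\B_\lambda\times\B_{p^n\alpha}$ into the basis of $M^{\lambda+p^n\alpha}(P_\nu)$. The only difference is that your counting argument forcing $m_k=\alpha_k$ spells out the bijectivity claim that the paper dismisses as ``easily seen'' in the case $p^n>\lambda_1$.
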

\begin{proof}
If $\mu=\sum_{i=0}^s\mu(i)p^i$ is the $p$-adic expansion of $\mu$, and $\delta=\sum_{i=0}^l\delta(i)p^i$ is the $p$-adic expansion of $\delta$, then since $n>s$, it follows that \begin{equation}\label{eqn:addingdeltapn}\mu+p^n\delta=\sum_{i=0}^s\mu(i)p^i+\sum_{j=n}^{n+l}\delta(j-n)p^j\end{equation} is the $p$-adic expansion of 
$\mu+p^n\delta$.  The first part of $(a)$ now follows easily from Grabmeier's description of a Young vertex, and the second part of $(a)$ follows from Theorem \ref{ErdmannBroue}(a).

By Proposition \ref{prop:multiplyingbyp}, $P_{p^n\gamma}$ is a vertex of $Y^{p^n\delta}$.  Thus, applying Theorem \ref{ErdmannBroue} yields \begin{eqnarray}\label{eqn:mu} Y^\mu(P_\rho)&\cong &Y^{\mu(0)}\boxtimes\dots\boxtimes Y^{\mu(s)}\;\;{\rm as}\;{\rm a}\;{\rm module}\;{\rm for}\;\mS_\beta\\\label{eqn:pndelta}Y^{p^n\delta}(P_{p^n\gamma})&\cong & Y^{\delta(0)}\boxtimes\dots\boxtimes Y^{\delta(l)}\;\;{\rm as}\;{\rm a}\;{\rm module}\;{\rm for}\;\mS_\eta.
\end{eqnarray}
We have already seen that $P_\nu$ is a vertex of $Y^{\mu+p^n\delta}$, and hence by Theorem \ref{ErdmannBroue} and the $p$-adic expansion given in (\ref{eqn:addingdeltapn}), we see \[Y^{\mu+p^n\delta}(P_\nu)\cong Y^{\mu(0)}\boxtimes\dots\boxtimes Y^{\mu(s)}\boxtimes Y^{\delta(0)}\boxtimes\dots\boxtimes Y^{\delta(l)}\] as modules for $\mS_{\beta\bullet\eta}$.  Identifying $\mS_{\beta\bullet\eta}$ with $\mS_\beta\times\mS_\eta$ and noting (\ref{eqn:mu}) and (\ref{eqn:pndelta}), we obtain the isomorphism in claim $(b)$.

For each $i< p^n$, the orbits of $\mS_\nu$ of length $i$ are the same as the orbits of $\mS_\rho$ of length $i$, and for each $i\geq p^n$, the orbits of $\mS_\nu$ of length $i$ are identified with the orbits of $\mS_{p^n\gamma}$ of length $i$ (by shifting the entries by $r$).
Let $\B$ be the basis for $M^{\lambda+p^n\alpha}(P_\nu)$ consisting of $\lambda+p^n\alpha$-tabloids whose rows are unions of $\mS_\nu$-orbits.
Let $\B_\lambda$, and $\B_{p^n\alpha}$ be the corresponding tabloid bases for $M^\lambda(P_\rho)$, and $M^{p^n\alpha}(P_{p^n\gamma})$ respectively.  If $x_1,x_2$ are tabloids lying in $\B_\lambda$ and $\B_{p^n\alpha}$ respectively, then define a tabloid $x\in\B_{\lambda+p^n\alpha}$ by taking the $i$th row of $x$ to be the union of the $\mS_\nu$-orbits corresponding to the $\mS_{p^n\gamma}$-orbits and $\mS_{\rho}$-orbits occurring in the $i$th rows of $x_1$ and $x_2$ respectively.  This defines an injective map of $\B_\lambda\times\B_{p^n\alpha}$ into $\B_{\lambda+p^n\alpha}$.  It is easily seen that if $p^n>\lambda_1$, then in fact this map is a bijection.

 $\mS_\beta\times\mS_\eta$ acts by permuting the $\mS_\nu$-orbits of the same length amongst themselves, and $\mS_\beta$ and $\mS_\eta$ act on $\B_\lambda$ and $B_{p^n\alpha}$ by permuting the $\mS_\rho$ and $\mS_{p^n\gamma}$ orbits of the same length amongst themselves.   It follows that this embedding of $\B_\lambda\times\B_{p^n\alpha}$ into $\B_{\lambda+p^n\alpha}$ identifies a subset of $\B_{\lambda+p^n\alpha}$, which is closed under the action of $\mS_\beta\times\mS_\eta$ (that is, it is a union of $\mS_\beta\times\mS_\eta$-orbits). This subset therefore forms a basis for a direct summand $N$ of $M^{\lambda+p^n\alpha}(P_\nu)$.  Moreover,  $N\cong M^\lambda(P_\rho)\boxtimes M^{p^n\alpha}(P_{p^n\gamma})$ as a module for $\mS_\beta\times\mS_\eta$.  This completes the proof.\end{proof}

\begin{thm}\label{thm:plusalphaplusdelta}
Let $a,r\in\N$ and let $\lambda,\mu\vdash r$, and $\alpha,\delta\vdash a$.  Let $\mu$ have $p$-adic expansion  $\mu=\sum_{i=0}^{s}\mu(i)p^i$.  If $n>s$ then \begin{equation}[M^{\lambda+p^n\alpha}:Y^{\mu+p^n\delta}]\geq[M^\lambda:Y^\mu][M^\alpha:Y^\delta].\label{eqn:plusalphaplusdelta}\end{equation}

 Furthermore, if $p^n>\lambda_1$ then there is equality in (\ref{eqn:plusalphaplusdelta}).
\end{thm}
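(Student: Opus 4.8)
The plan is to combine Proposition \ref{prop:rpluspna} with the Broué correspondence and Theorem \ref{thm:multbyp}. Throughout I adopt the notation of Proposition \ref{prop:rpluspna}: let $\mS_\rho$, $\mS_\gamma$ be Young vertices of $Y^\mu$, $Y^\delta$, set $\nu=\rho\bullet(p^n\gamma)$, and let $P_\rho$, $P_{p^n\gamma}$ be Sylow $p$-subgroups of $\mS_\rho$, $\mS_{p^n\gamma}$ with disjoint supports, so that $P_\nu=P_\rho\times P_{p^n\gamma}$ is a vertex of $Y^{\mu+p^n\delta}$ and $N_{\mS_{r+ap^n}}(\mS_\nu)/\mS_\nu\cong\mS_\beta\times\mS_\eta$. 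Since $P_\nu$ is a vertex of $Y^{\mu+p^n\delta}$, the Broué correspondence (Theorem \ref{thm:Brouecorrespondence}(b)) reduces the left-hand side to a multiplicity of Brauer quotients,
\[[M^{\lambda+p^n\alpha}:Y^{\mu+p^n\delta}]=[M^{\lambda+p^n\alpha}(P_\nu):Y^{\mu+p^n\delta}(P_\nu)],\]
and by Proposition \ref{prop:rpluspna}(b) the target summand is the outer tensor product $Y^{\mu+p^n\delta}(P_\nu)\cong Y^\mu(P_\rho)\boxtimes Y^{p^n\delta}(P_{p^n\gamma})$ of $\mS_\beta\times\mS_\eta$-modules, each factor being an indecomposable projective by Theorem \ref{ErdmannBroue}.

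The core of the argument is a multiplicativity step. Using Krull--Schmidt I would decompose the permutation modules $M^\lambda(P_\rho)$ and $M^{p^n\alpha}(P_{p^n\gamma})$ into indecomposable $p$-permutation modules over $\mS_\beta$ and $\mS_\eta$ respectively. Because the prime field is a splitting field for symmetric groups, every such indecomposable is absolutely indecomposable; hence the outer tensor product $M^\lambda(P_\rho)\boxtimes M^{p^n\alpha}(P_{p^n\gamma})$ decomposes as the outer tensor products of these indecomposable factors, which are again indecomposable, and an indecomposable summand is isomorphic to a prescribed one precisely when its two tensor factors match. Consequently the multiplicity of $Y^\mu(P_\rho)\boxtimes Y^{p^n\delta}(P_{p^n\gamma})$ in this tensor product factors as
\[[M^\lambda(P_\rho):Y^\mu(P_\rho)]\cdot[M^{p^n\alpha}(P_{p^n\gamma}):Y^{p^n\delta}(P_{p^n\gamma})].\]
Applying the Broué correspondence to each factor, now inside $\mS_r$ and $\mS_{ap^n}$, rewrites these as $[M^\lambda:Y^\mu]$ and $[M^{p^n\alpha}:Y^{p^n\delta}]$, and an $n$-fold application of Theorem \ref{thm:multbyp} turns the latter into $[M^\alpha:Y^\delta]$.

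It remains to pass from the tensor product to $M^{\lambda+p^n\alpha}(P_\nu)$. By Proposition \ref{prop:rpluspna}(c) there is a direct summand $N\cong M^\lambda(P_\rho)\boxtimes M^{p^n\alpha}(P_{p^n\gamma})$ of $M^{\lambda+p^n\alpha}(P_\nu)$, so the multiplicity of $Y^{\mu+p^n\delta}(P_\nu)$ in the Brauer quotient is at least its multiplicity in $N$, which is $[M^\lambda:Y^\mu][M^\alpha:Y^\delta]$; combined with the two displayed equalities this yields the inequality (\ref{eqn:plusalphaplusdelta}). When $p^n>\lambda_1$, Proposition \ref{prop:rpluspna}(c) gives $N\cong M^{\lambda+p^n\alpha}(P_\nu)$ outright, so the inequality becomes an equality.

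The one place demanding genuine care is the multiplicativity step: one must justify that outer tensor products of absolutely indecomposable $p$-permutation modules remain indecomposable and that Krull--Schmidt multiplicities factor across $\mS_\beta\times\mS_\eta$. This is exactly where the splitting-field property of the prime field for symmetric groups is essential, since over a non-splitting field the endomorphism rings of the factors could acquire larger residue fields and the tensor factorisation of the indecomposables could fail; I would state this splitting-field fact explicitly and invoke it at this point.
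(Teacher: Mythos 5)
Your proposal is correct and follows the paper's own proof essentially step for step: the setup and both key isomorphisms come from Proposition \ref{prop:rpluspna}, the passage to Brauer quotients and back is the Brou\'e correspondence (Theorem \ref{thm:Brouecorrespondence}), the factor $[M^{p^n\alpha}:Y^{p^n\delta}]$ is converted to $[M^\alpha:Y^\delta]$ by Theorem \ref{thm:multbyp}, and the equality case $p^n>\lambda_1$ uses the stronger conclusion of Proposition \ref{prop:rpluspna}(c), exactly as in the paper. The only difference is that you spell out the Krull--Schmidt multiplicativity of multiplicities across outer tensor products, which the paper invokes silently; that step is indeed valid here, though the absolute indecomposability of the relevant summands is best justified not by the splitting-field property alone (which for general modules does not imply absolute indecomposability) but by the fact that their Brauer quotients are projective indecomposable modules over split symmetric group algebras, hence have split local endomorphism rings.
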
 
\begin{proof}
Let $\rho$,$\gamma$ be partitions such that $\mS_\rho$ and $\mS_\gamma$ are Young vertices of $Y^\mu$ and $Y^\delta$ respectively.  By Proposition \ref{prop:rpluspna}, a Young vertex of $Y^{\mu+p^n\delta}$ is $\mS_\nu$ where $\nu=\rho\bullet(p^n\gamma)$.  Let $P_\nu,P_{p^n\gamma}$, and $P_\rho$ be Sylow $p$-subgroups of $\mS_\nu,\mS_{p^n\gamma}$, and $\mS_\rho$ respectively.  By Proposition \ref{prop:rpluspna}, parts $(b)$ and $(c)$, we have
\begin{equation}M^\lambda(P_\rho)\boxtimes M^{p^n\alpha}(P_{p^n\gamma}) \mid M^{\lambda+p^n\alpha}(P_\nu),\end{equation} and
\begin{equation}Y^{\mu}(P_\rho)\boxtimes Y^{p^n\delta}(P_{p^n\gamma})\cong Y^{\mu+p^n\delta}(P_\nu)\end{equation} as modules for $\mS_\beta\times\mS_\eta$.
By Theorem \ref{thm:Brouecorrespondence}, and the above discussion we have \begin{eqnarray*}[M^{\lambda+p^n\alpha}:Y^{\mu+p^n\delta}]&=&[M^{\lambda+p^n\alpha}(P_\nu):Y^{\mu+p^n\delta}(P_\nu)]\\
&\geq & [ M^{\lambda}(P_\rho)\boxtimes M^{p^n\alpha}(P_{p^n\gamma}):Y^\mu(P_\rho)\boxtimes Y^{p^n\delta}(P_{p^n\gamma})]\\
&=& [M^{\lambda}(P_\rho):Y^{\mu}(P_\rho)][M^{p^n\alpha}(P_{p^n\gamma}):Y^{p^n\delta}(P_{p^n\gamma})].
\end{eqnarray*}  
Thus, we conclude that 
\begin{equation}[M^{\lambda+p^n\alpha}:Y^{\mu+p^n\delta}]\geq [M^\lambda:Y^\mu][M^{p^n\alpha}:Y^{p^n\delta}].
\label{eqn:nearlythere}\end{equation}
Applying Theorem \ref{thm:multbyp} gives $[M^{p^n\alpha}:Y^{p^n\delta}]=[M^\alpha:Y^\delta]$ and (\ref{eqn:plusalphaplusdelta}) follows.
If $p^n>\lambda_1$, then by Proposition \ref{prop:rpluspna}(c), and an identical argument to above, we obtain equality in (\ref{eqn:plusalphaplusdelta}).  
\end{proof}

In Theorem \ref{thm:plusalphaplusdelta} we have given a sufficient condition for there to be equality in (\ref{eqn:plusalphaplusdelta}).  However, there are many examples in which equality does not hold.  For example, taking $p=2,n=1,\lambda=(3,1^3),\mu=(3,2,1),\alpha=(1,1),\delta=(2)$. Then $[M^{\alpha}:Y^\delta]=0$. Now, $\mu+p^n\delta=(7,2,1)$ has $2$-adic expansion $4(1)+(3,2,1)$, so $Y^{(7,2,1)}$ has Young vertex $\mS_{(4,1^6)}$. If $P$ is a Sylow $p$-subgroup of $\mS_{(4,1^6)}$ then $M^{(5,3,1,1)}(P)\cong M^{(3,1^3)}\boxtimes M^{(1)}$ and $Y^{(7,2,1)}(P)\cong Y^{(3,2,1)}\boxtimes Y^{(1)}$ as modules for $\mS_{(6,1)}\cong N_{\mS_{10}}(\mS_{(4,1^6)})/\mS_{(4,1^6)}$.  It follows that $[M^{(5,3,1,1)}:Y^{(7,2,1)}]=[M^{(3,1^3)}:Y^{(3,2,1)}][M^{(1)}:Y^{(1)}]$.  It is easily checked that $[M^{(3,1^3)}:Y^{(3,2,1)}]=2$ (for instance, since $(3,2,1)$ is a $2$-core, $Y^{(3,2,1)}$ is equal to the Specht module $S^{(3,2,1)}$ and is projective, so $[M^{(3,1^3)}:Y^{(3,2,1)}]=\langle\xi^{(3,1^3)}:\chi^{(3,2,1)}\rangle$ which can be calculated via the Littlewood-Richardson rule).  Therefore
\[[M^{\lambda+p^n\alpha}:Y^{\mu+p^n\delta}]=2>0=[M^\lambda:Y^\mu][M^\alpha:Y^\delta].\]

In Theorem \ref{thm:plusalphaplusdelta} we focused on the case of adding $p$-power multiples of partitions.  In the special case when $\alpha=\delta=(a)$ we will now say much more in the following theorem which strengthens \cite[Corollary 4.2]{HenkeKoenig} and \cite[Corollary 6.2]{henke}:

\begin{thm}\label{stronger}\label{thm:addingp}
Let $\lambda\vdash r$ such that $\lambda_2<p^n$ then \[[M^{\lambda+(ap^n)}:Y^{\mu+(ap^n)}]=[M^\lambda:Y^\mu]\] for every $a\in\N$ and every $\mu\vdash r$.
\end{thm}
\begin{proof}
Let $\sum_{i\geq 0}\mu(i)p^i$ be the $p$-adic expansion of $\mu$ and define $\underline\mu=\sum_{i=0}^{n-1}\mu(i)p^i, \hat\mu=\sum_{i\geq n}\mu(i)p^i$.  Then $\mbar$ and $\mhat$ are both partitions and $\mu=\mbar+\mhat$.  For all $i$ let $r_i=\mid\mu(i)\mid$.  We also define $\underline\rho$ to be the partition of $\mid\mbar\mid$ with 
$r_i$ parts equal to $p^i$ for $i=0,\dots,n-1$  and $\hat\rho$ to be the partition of $\mid \mhat\mid$ with 
$r_i$ parts equal to $p^i$ for each $i\geq n$,

so that if $\rho=\hat\rho\bullet\underline\rho$ then $\mS_\rho$ is a Young vertex of $Y^\mu$.  Notice also that $\mS_{\hat\rho}$, and $\mS_{\underline{\rho}}$ are Young vertices for $Y^{\mhat}$ and $Y^{\mbar}$ respectively. It is clear that $\mS_{\rho}\cong\mS_{\hat\rho}\times\mS_{\underline\rho}$, and we will from here on view this isomorphism as an identification.  Now, if  $\hat{Q}$ and $\underline{Q}$ are Sylow $p$-subgroups of $\mS_{\hat\rho},\mS_{\underline\rho}$ respectively, then $Q=\hat{Q}\times\underline{Q}$ is a Sylow $p$-subgroup of $\mS_\rho$, so $\hat{Q},\underline{Q},Q$ are vertices of $Y^{\mhat},Y^{\mbar}$ and $Y^\mu$ respectively.  Setting $\underline\beta=(r_0,\dots,r_{n-1})$
and $\hat{\beta}=(r_{n},r_{n+1},\dots)$ and defining $\beta=\underline\beta\bullet\hat\beta$ we recall that $N_{\mS_{r}}(\mS_{\rho})/\mS_{\rho}\cong \mS_{\beta}\cong \mS_{\underline\beta}\times\mS_{\hat\beta}$.

We now consider the multiplicity $[M^\lambda:Y^\mu]=[M^\lambda(Q):Y^\mu(Q)]$.  Recall that $M^\lambda(Q)$ is isomorphic to the linear span of all $\lambda$-tabloids with rows unions of $\mS_{\rho}$-orbits, viewed as a module for $\mS_\beta$ where the action is given by permuting the orbits of the same length amongst themselves.  
Now, if $x_1$ is a $\lambda$-tabloid with rows all unions of $\mS_{\rho}$-orbits then since $p^n>\lambda_2$, all orbits of $\mS_{\rho}$ of size at least $p^n$ lie in the first row of $x_1$.  The union of all such $\mS_{\rho}$-orbits has size $\mid\mhat\mid$.  Thus, if $\mid\mhat\mid>\lambda_1$ then $M^\lambda(Q)=0$ and
$[M^\lambda:Y^\mu]=[M^\lambda(Q):Y^\mu(Q)]=0.$  

Otherwise, $\mid\mhat\mid\leq\lambda_1$ and we may define $\eta$ to be the composition obtained from $\lambda$ by subtracting $\mid\mhat\mid$ from $\lambda_1$.  Define $\mathcal{T}$ to be the set of all tabloids of shape $\eta$, with rows unions of all $\mS_\rho$-orbits of length less than $p^n$.  It follows that there is a bijection between $\mathcal{T}$ and the set of all $\lambda$-tabloids with rows unions of $\mS_\rho$-orbits, given by adjoining the $\mS_\rho$-orbits of size at least $p^n$ to the first row of the elements of $\mathcal{T}$.

Under the isomorphism $\mS_{\beta}\cong \mS_{\underline{\beta}}\times\mS_{\hat{\beta}}$ the subgroup $\mS_{\underline{\beta}}$ acts by permuting the      $\mS_\rho$-orbits of size less than $p^n$ of the same length amongst themselves, and $\mS_{\hat\beta}$ acts by permuting the orbits of size at least $p^n$ of the same length amongst themselves.  It follows that $M^\lambda(Q)\cong \langle\mathcal{T}\rangle\boxtimes M^{(\mid\mhat\mid)}(\hat{Q})$ as a module for $\mS_{\underline\beta}\times\mS_{\hat\beta}$.

Now, by Theorem \ref{ErdmannBroue} and the above identifications, $Y^\mu(Q)\cong Y^{\mbar}(\underline{Q})\boxtimes Y^{\mhat}(\hat{Q})$ as modules for $\mS_{\underline\beta}\times\mS_{\hat\beta}$. Hence \begin{eqnarray*}[M^\lambda:Y^\mu]&=&[\langle\mathcal{T}\rangle:Y^{\mbar}(\underline{Q})][M^{(\mid\mhat\mid)}(\hat{Q}):Y^{\mhat}(\hat{Q})].\\
&=&[\langle\mathcal{T}\rangle:Y^{\mbar}(\underline{Q})][M^{(\mid\mhat\mid)}:Y^{\mhat}].\end{eqnarray*} 
Now, $\mu+(ap^n)=\mbar+(\mhat+(ap^n))$, and we write $\sum_{i\geq n}\nu(i)p^i$ for the $p$-adic expansion of $\mhat+(ap^n)$.  Then $\sum_{i=0}^{n-1}\mu(i)p^i+\sum_{i\geq n}\nu(i)p^i$ is the $p$-adic expansion of $\mu+(ap^n)$.  Let $\gamma$ be the partition with $\mid\nu(j)\mid$ parts equal to $p^j$ for every $j\geq n$.  Then $\mS_\gamma$ is a Young vertex of $Y^{\mhat+(ap^n)}$, and $\mS_{\gamma\bullet\underline\rho}$ is a Young vertex of $Y^{\mu+(ap^n)}$.  Thus, if $\hat{P}$ is a Sylow $p$-subgroup of $\mS_{\gamma}$ then $\hat{P}\times \underline{Q}=P$ is a Sylow $p$-subgroup of $\mS_{\gamma\bullet\underline\rho}$, and hence is a vertex of $Y^{\mu+(ap^n)}$.  Also, define $\tau$ to be a composition with parts $\mid\nu(i)\mid$ for all $i$ so that $N_{\mS_{\mid\mhat\mid+ap^n}}(\mS_{\gamma})/\mS_{\gamma}\cong \mS_\tau$.  Then $N_{\mS_{r+ap^n}}(\mS_{\gamma\bullet\underline\rho})/\mS_{\gamma\bullet\underline\rho}\cong \mS_{\underline{\beta}\bullet\tau}\cong \mS_{\underline{\beta}}\times\mS_\tau$.

Now, $M^{\lambda+(ap^n)}(P)$ is isomorphic to the linear span of the $(\lambda+(ap^n))$-tabloids with rows unions of $\mS_{\gamma\bullet\underline\rho}$-orbits.   As before, if $x_2$ is such a tabloid, then the orbits of $\mS_{\gamma\bullet\underline\rho}$ of size at least $p^n$ must all lie in the first row of $x_2$, since the second part of $\lambda+(ap^n)$ is $\lambda_2$. If $\mid\mhat\mid>\lambda_1$ then $\mid\mhat+(ap^n)\mid>\lambda_1+ap^n$ and hence $M^{\lambda+(ap^n)}(P)=0$, so in particular $[M^{\lambda+(ap^n)}:Y^{\mu+(ap^n)}]=0=[M^\lambda:Y^\mu]$.  Thus, we may assume that $\mid\mhat\mid\leq\lambda_1$.  So, noting that if we subtract $\mid\mhat+ap^n\mid$ from the first part of $\lambda+(ap^n)$ we obtain $\eta$, it follows as before that the $(\lambda+(ap^n))$-tabloids with rows unions of $\mS_{\gamma\bullet\underline\rho}$-orbits are in one-to-one correspondence with the elements of $\mathcal{T}$.  In this way we obtain an isomorphism 
$M^{\lambda+(ap^n)}(P)\cong \langle\mathcal{T}\rangle\boxtimes M^{(\mid\mhat\mid+ap^n)}(\hat{P})$ as modules for $\mS_{\underline\beta}\times\mS_\tau$.  From the above construction and Theorem \ref{ErdmannBroue} follows that $\hat{P}$ is a vertex of $Y^{\mhat+(ap^n)}$ and $Y^{\mu+(ap^n)}(P)\cong Y^{\mbar}(\underline{Q})\boxtimes Y^{\mhat+(ap^n)}(\hat{P})$ as modules for $\mS_{\underline\beta}\times\mS_\tau$.  Thus,
\begin{eqnarray*}
[M^{\lambda+(ap^n)}:Y^{\mu+(ap^n)}]&=&[\langle\mathcal{T}\rangle:Y^{\mbar}(\underline{Q})][M^{(\mid\mhat\mid+ap^n)}(\hat{P}):Y^{\mhat+(ap^n)}(\hat{P})]\\
&=&[\langle\mathcal{T}\rangle:Y^{\mbar}(\underline{Q})][M^{(\mid\mhat\mid+ap^n)}:Y^{\mhat+(ap^n)}]
\end{eqnarray*}
It remains only to note that \[[M^{(\mid\mhat\mid+ap^n)}:Y^{\mhat+(ap^n)}]=[M^{(\mid\mhat\mid)}:Y^{\mhat}]=\left\{\begin{array}{ll}1&\mbox{ if $\mhat=(\mid\mhat\mid)$}\\0&\mbox{otherwise.}\end{array}\right.\] \end{proof}

\begin{remark}
It is worth noting here that the sufficient condition for equality in Theorem \ref{thm:plusalphaplusdelta} is not necessary.  For example, if $\lambda,\mu\vdash r$ with $\lambda_2<p^n$ and such that $\mu$ has $p$-adic expansion of the form $\sum_{i=0}^s\mu(i)p^i$ for $s<n$, then by Theorem \ref{stronger} it follows that there is equality in (\ref{eqn:plusalphaplusdelta}) whenever $\alpha=\delta=(a)$ for any $a\in\N$.  However, here we have no condition on $\lambda_1$.
\end{remark}

\section{A consequence for decomposition numbers}\label{sec:dec}

For this section, let $(K,R,k)$ be a $p$-modular system.  As was first shown by Scott (\cite{ScottModularPerms}), for any indecomposable $k\mS_r$-module $U$ with trivial source, there exists an indecomposable $R\mS_r$-module $\hat{U}$ with trivial source such that $\hat{U}\otimes_Rk\cong U$.  Moreover, $\hat{U}$ is unique up to isomorphism.  Thus, we associate to $U$ a well-defined ordinary character $\ch(U)$, defined as the character of $\hat{U}\otimes_RK$.   

The ordinary character of a transitive permutation module is the usual corresponding permutation character, so from Theorem \ref{thm:Youngs} follows 
\begin{equation}\xi^\lambda=\ch(Y^\lambda)+\sum_{\mu\triangleright\lambda}[M^\lambda:Y^\mu]\ch(Y^\mu),\label{ordinaryxi}\end{equation} for each $\lambda\vdash r$.  Furthermore, from Young's rule (Theorem \ref{thm:Youngsrule}) it follows that $\langle\xi^\lambda,\chi^\lambda\rangle=1$ and $\langle\xi^\lambda,\chi^\mu\rangle\neq 0$ only if $\mu\trianglerighteq\lambda$.  Thus, from equation (\ref{ordinaryxi}) it follows that $\langle\ch(Y^\lambda),\chi^\lambda\rangle=1$ and $\langle\ch(Y^\lambda),\chi^\mu\rangle\neq 0$ only if $\mu\trianglerighteq\lambda$.

It is the ordinary characters of the Young modules which relate the decomposition numbers of the Schur algebras and the $p$-Kostka numbers.  For background on the Schur algebra, we refer the reader to  \cite{GreenSchurAlgebras} and \cite{MartinSchurAlgebras}.  We adopt the usual notation and write $d_{\mu\lambda}=(\nabla(\mu):L(\lambda))$ for the decomposition numbers of the Schur algebra $S(r,r)$ (defined over an infinite field of characteristic $p$), where $\lambda,\mu\vdash r$.  Then \cite[4.6.4]{MartinSchurAlgebras} we have \begin{equation}\ch(Y^\lambda)=\sum_{\mu\vdash r}(\nabla(\mu):L(\lambda))\chi^\mu.\label{decordchar}\end{equation}
Now, by Young's rule, the multiplicities $\langle\xi^\lambda,\chi^\mu\rangle$ are known and if one knows the decomposition numbers of $S(r,r)$ then by the uni-triangular nature of the decomposition matrix, and (\ref{ordinaryxi}), one may calculate the $p$-Kostka numbers $[M^\lambda:Y^\mu]$ for all $\lambda,\mu\vdash r$.  Similarly, if one knows the $p$-Kostka numbers, then by Young's rule and (\ref{ordinaryxi}), one may calculate the multiplicities $\langle\ch(Y^\lambda),\chi^\mu\rangle=(\nabla(\mu):L(\lambda))$ for all $\lambda,\mu\vdash r$.  We can now state the main result of this section:
\begin{thm}\label{thm:decomp}
If one knows the decomposition numbers of the principal block of $S(rp,rp)$, that is, $(\nabla(\mu):L(\lambda))$ for all partitions $\lambda,\mu\vdash rp$ with empty $p$-core, then there is an algorithm to calculate the decomposition numbers of $S(r,r)$.
\end{thm}
\begin{proof}
If $\lambda\vdash r$ then $p\lambda\vdash pr$ has an empty $p$-core and hence $Y^{p\lambda}$ lies in the principal block $B_0$ of $k\mS_{rp}$. If $\gamma$ is any partition of $rp$ lying in $B_0$ then by Young's rule we may calculate the multiplicities of the irreducible constituents of the principal block part of $\xi^\gamma$, which we denote as $(\xi^{\gamma})_{B_0}=\ch((M^{\gamma})_{B_0})$.  By equation \ref{decordchar}, we may assume the ordinary characters of the Young modules lying in $B_0$ are known, and hence, since the decomposition matrix of $S(rp,rp)$ is uni-triangular, we may calculate the $p$-Kostka numbers $[M^{p\lambda}:Y^\mu]$ where $\lambda$ runs over the partitions of $r$ and $\mu$ runs over the partitions of $rp$ with empty $p$-core.  In particular, we have calculated $[M^{p\lambda}:Y^{p\mu}]$ for all $\lambda,\mu\vdash r$.  Now by Theorem \ref{thm:multbyp} we have calculated the $p$-Kostka numbers $[M^\lambda:Y^\mu]$ for all $\lambda,\mu\vdash r$ and by the discussion preceding this theorem, 
we may calculate the decomposition numbers $(\nabla(\mu):L(\lambda))$ for all $\lambda,\mu\vdash r$.
\end{proof}

A question immediately raised by Theorem \ref{thm:decomp} is to determine how one might combinatorially describe the connection between the decomposition numbers of $S(r,r)$ and those of the principal block of $S(rp,rp)$.  Henke described the decomposition numbers corresponding to two-part partitions in \cite{HenkeCartan}.  In particular, from Lemma 2.4 and Proposition 2.2 of \cite{HenkeCartan} follows that if $\lambda,\mu$ are two-part partitions of $r$ then $(\nabla(p\mu):L(p\lambda))=(\nabla(\mu):L(\lambda))$. We now give some examples to show that this does not hold in the general case.

It can be checked (for example by using the methods described in this paper), that over a field of characteristic $3$, we have the following ordinary characters:
\begin{eqnarray*}
\ch Y^{(3^3)}&=& \chi^{(3^3)}+\chi^{(4,3,2)}+\chi^{(5,4)}+\chi^{(6,2,1)}+2\chi^{(6,3)}+\chi^{(7,1^2)}+\chi^{(8,1)},\\
\ch Y^{(1^3)}&=&\chi^{(1^3)}+\chi^{(2,1)}.
\end{eqnarray*}
In particular, when $p=3$, it follows that $(\nabla(2,1):L(1,1,1))=1$ whilst $(\nabla(6,3):L(3,3,3))=2$.

Over a field of characteristic $2$ we have 
\begin{eqnarray*}
\ch Y^{(4,2^2)}&=&\chi^{(4,2^2)}+\chi^{(4,3,1)}+\chi^{(4^2)}+2\chi^{(5,3)}+\chi^{(6,1^2)}+2\chi^{(6,2)}+\chi^{(7,1)}.\\
\ch Y^{(2,1^2)}&=&\chi^{(2,1^2)}+\chi^{(2^2)}+\chi^{(3,1)}.
\end{eqnarray*}
Thus, when $p=2$ it follows that $(\nabla(3,1):L(2,1^2))=1$ whilst $(\nabla(6,2):L(4,2^2))=2.$

 \section{The indecomposable Young permutation modules}\label{sec:indec}
In this section, we proceed to a proof of Theorem \ref{thm:indecpartitions}. 
The main method of proof is to demonstrate that a Young permutation module $M^\lambda$ has composition factors lying in at least two different $p$-blocks of $\mS_r$.  For $p\geq 3$, in combination with a simple dimension argument, this is sufficient (Proposition \ref{oddprimesclassification}), and similarly for the case where $p=2$ and $r$ is odd.  For the situation where $p=2$ and $r$ is even, we apply a similar method, aside from a few exceptional cases which are dealt with separately, to reduce the problem of determining the indecomposable Young permutation modules to the case where $\lambda\vdash r$ has at most $2$ parts.  In this case, the $p$-Kostka numbers have been determined by Henke in \cite{henke}, and an analysis of these multiplicities (Proposition \ref{thm:indecperm}) yields the final classification.  

We recall the following well-known lemma:
\begin{lemma}\label{lem:triv}
Let $G$ be a finite group and $M$ a transitive permutation $FG$-module, where $F$ is some field. Then $M$ has a unique trivial submodule $C$.  Moreover, $C$ is a direct summand of $M$ if, and only if, the characteristic of $F$ does not divide $\dim_FM$.  
\end{lemma}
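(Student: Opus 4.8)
The plan is to work with a fixed permutation basis together with the all-ones vector, and then to reduce the splitting question to a computation of $G$-homomorphisms into the trivial module. First I would fix a permutation basis $\{x_1,\dots,x_n\}$ of $M$, where $n=\dim_F M$ and $G$ permutes the $x_i$ transitively. The element $c:=\sum_{i=1}^n x_i$ is fixed by every $g\in G$, so $C:=Fc$ is a trivial submodule. For uniqueness, I would observe that any $G$-fixed vector $v=\sum a_i x_i$ has all its coefficients equal: transitivity gives, for each pair $i,j$, some $g$ with $x_i g=x_j$, and comparing the coefficients of $v=vg$ forces $a_i=a_j$. Hence the fixed space $M^G=Fc=C$ is one-dimensional, and since any trivial submodule is contained in $M^G$, it must coincide with $C$.

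For the summand statement, I would introduce the augmentation homomorphism $\epsilon\colon M\to F$ onto the trivial module, defined by $\epsilon(x_i)=1$ for all $i$, so that $\epsilon(c)=n$. The key computation is that $\Hom_{FG}(M,F)$ is one-dimensional and spanned by $\epsilon$: by the same transitivity argument (the action on $F$ being trivial), any $G$-homomorphism $f\colon M\to F$ satisfies $f(x_i)=f(x_j)$ for all $i,j$, so $f$ is a scalar multiple of $\epsilon$. Now $C$ is a direct summand of $M$ precisely when the inclusion $C\hookrightarrow M$ admits a $G$-equivariant retraction $\pi\colon M\to C$. Writing $\pi(\,\cdot\,)=\phi(\,\cdot\,)\,c$ for some $\phi\in\Hom_{FG}(M,F)$, the retraction condition $\pi(c)=c$ becomes $\phi(c)=1$; since $\phi=a\epsilon$ for some $a\in F$, this reads $a\,n=1$, which is solvable for $a$ if and only if $n\neq 0$ in $F$, that is, if and only if $\operatorname{char}(F)\nmid \dim_F M$.

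The only genuine subtlety is the ``only if'' direction: when $\operatorname{char}(F)\mid n$ one must rule out \emph{every} possible complement, not merely the obvious candidate (the augmentation submodule $\ker\epsilon$). The computation of $\Hom_{FG}(M,F)$ is what settles this uniformly, since it shows that in this case every $G$-homomorphism $M\to F$ necessarily kills $c$, so no retraction onto $C$ can exist and $C$ cannot be a direct summand.
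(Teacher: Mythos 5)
Your proof is correct and complete. There is, however, nothing in the paper to compare it against: the lemma is quoted there as ``well-known'' and used as a black box, with no proof supplied, so your argument fills in an omitted verification rather than paralleling or diverging from an authorial one. The route you take is the standard one and it works: transitivity forces the fixed-point space $M^G$ to be the line spanned by the orbit sum $c$, which gives existence and uniqueness of the trivial submodule; the same transitivity computation shows $\Hom_{FG}(M,F)=F\epsilon$ where $\epsilon$ is the augmentation; and since $C$ is a direct summand precisely when the inclusion $C\hookrightarrow M$ admits an $FG$-retraction, the question reduces to whether some $\phi=a\epsilon$ satisfies $\phi(c)=a\,\dim_F M=1$, which is solvable exactly when the characteristic of $F$ does not divide $\dim_F M$. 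You are also right that the ``only if'' direction is the one needing care, and your treatment of it is the correct one: the observation $\epsilon(c)=0$ by itself only rules out $\ker\epsilon$ as a complement, whereas the one-dimensionality of $\Hom_{FG}(M,F)$ rules out every conceivable retraction simultaneously. (A minor alternative packaging of the ``if'' direction: when $\dim_F M$ is invertible in $F$, one has $M=C\oplus\ker\epsilon$ directly, since $c\notin\ker\epsilon$; but this is the same computation.)
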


In particular, when $F$ has characteristic zero, and $M$ is the permutation module on the cosets of a Young subgroup $\mS_\lambda$, Lemma \ref{lem:triv} simply states that $\langle\xi^{\lambda},\chi^{(r)}\rangle=1$ for every $\lambda\vdash r$.  This is a fact we will make repeated use of.

We note a useful first case that follows easily from Lemma \ref{lem:triv}.
\begin{lemma}\label{lem:n-1}
$M^{(r-1,1)}$ is indecomposable if, and only if, $p$ divides $r$.
\end{lemma}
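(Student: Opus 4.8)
The plan is to analyze the permutation module $M^{(r-1,1)}$ directly as a transitive permutation module. The Young subgroup $\mS_{(r-1,1)}$ is the point stabiliser in the natural action of $\mS_r$ on $\{1,\dots,r\}$, so $M^{(r-1,1)}$ is exactly the natural permutation module $k^r$ with basis given by the $r$ standard tabloids (equivalently, the points $1,\dots,r$). This is a transitive permutation module of dimension $r$, so Lemma \ref{lem:triv} applies directly.

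First I would observe that by Lemma \ref{lem:triv}, the unique trivial submodule $C$ of $M^{(r-1,1)}$ is a direct summand if, and only if, $p\nmid \dim_k M^{(r-1,1)} = r$. Thus when $p\nmid r$, the module $M^{(r-1,1)}$ splits as $C\oplus N$ for some complement $N$, and since $\dim C = 1 < r = \dim M^{(r-1,1)}$ (using $r\geq 2$), the module is decomposable. This handles the ``only if'' direction: if $M^{(r-1,1)}$ is indecomposable then $p\mid r$.

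For the converse, suppose $p\mid r$. Then by Lemma \ref{lem:triv} the trivial submodule $C$ is \emph{not} a direct summand. I would then argue that $M^{(r-1,1)}$ is in fact uniserial of length $2$, or more directly, that it has a unique maximal submodule. The cleanest route is via the Young module decomposition of Theorem \ref{thm:Youngs}: since the only partitions dominating $(r-1,1)$ are $(r-1,1)$ itself and $(r)$, we have $M^{(r-1,1)}\cong Y^{(r-1,1)}\oplus [M^{(r-1,1)}:Y^{(r)}]\,Y^{(r)}$, and $Y^{(r)}=M^{(r)}=k$ is the trivial module. So indecomposability of $M^{(r-1,1)}$ is equivalent to the multiplicity $[M^{(r-1,1)}:Y^{(r)}]$ being zero. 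One computes this multiplicity using the characters: $\langle \xi^{(r-1,1)},\chi^{(r)}\rangle = 1$ counts the trivial character once in the ordinary character, and the Young module $Y^{(r-1,1)}$ already contributes one copy of the trivial composition factor (as the top of its socle/head structure when $p\mid r$). Hence $[M^{(r-1,1)}:Y^{(r)}]=0$ precisely when $p\mid r$, so $M^{(r-1,1)}=Y^{(r-1,1)}$ is indecomposable.

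The main obstacle is pinning down the converse cleanly, that is, showing $p\mid r$ forces indecomposability rather than merely forcing $C$ to be non-split. The subtlety is that failure of $C$ to split does not by itself rule out some \emph{other} decomposition of $M^{(r-1,1)}$. I expect the most robust argument is the Young-module one above: because only $(r)$ and $(r-1,1)$ dominate $(r-1,1)$, the module has at most two indecomposable summands, and a non-trivial decomposition would force a trivial summand $Y^{(r)}=k$, which is exactly the split complement $C$ excluded by Lemma \ref{lem:triv} when $p\mid r$. Making that last step airtight — identifying the hypothetical second summand with the trivial submodule $C$ and invoking its non-splitting — is where the real content lies, but it follows directly from the dominance constraint in Theorem \ref{thm:Youngs}(c) together with $Y^{(r)}=k$.
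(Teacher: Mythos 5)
Your proof is correct and is essentially the paper's own argument: by the dominance constraint of Theorem \ref{thm:Youngs}(c) any nontrivial decomposition of $M^{(r-1,1)}$ forces a direct summand $Y^{(r)}=k$, which by the uniqueness statement in Lemma \ref{lem:triv} must be the trivial submodule $C$, and $C$ splits off if and only if $p\nmid r=\dim_k M^{(r-1,1)}$. The character-theoretic digression in your middle paragraph is unnecessary and shaky as stated (the claim about the head of $Y^{(r-1,1)}$ when $p\mid r$ is unjustified and borders on circular), but your final paragraph supersedes it with the clean argument, which is exactly the paper's proof.
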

\begin{proof}  
By Theorem \ref{thm:Youngs} and Lemma \ref{lem:triv}, we have $M^{(r-1,1)}\cong Y^{(r-1,1)}\oplus \kappa Y^{(r)}$ where $\kappa$ is either $0$ or $1$.  By Lemma \ref{lem:triv} the result follows, since ${\rm dim}_k M^{(r-1,1)}=r$.
\end{proof}

We recall the induction product on modules (and characters) for symmetric groups.  That is, if $M$ is an $\mS_m$-module, and $N$ is an $\mS_n$-module, then the induction product is defined by $M_\bullet N=(M\otimes N)_{\mS_m\times \mS_n}\!\uparrow^{\mS_{m+n}}$.   
The induction product is both commutative and associative, and the decomposition of the induction product of two ordinary irreducible characters is given by the Littlewood--Richardson rule (see for example \cite[2.8.13 and 2.8.14]{JamesKerber}).

In the following, if $\varphi$ and $\psi$ are characters of $\mS_r$ over some field $F$ of characteristic zero, then we write $\varphi\leq \psi$ to mean that  $\langle\varphi,\chi^\lambda\rangle\leq \langle\psi,\chi^\lambda\rangle$ for all $\lambda\vdash r$.  

We require the following facts:

Let $\lambda=(\lambda_1,\dots,\lambda_t)$ be a partition of $r$ with $t\geq 2$.  Then $\lambda$ is the concatenation $\lambda=\gamma\bullet\delta$ of two partitions $\gamma=(\lambda_1,\dots,\lambda_i)$ and $\delta=(\lambda_{i+1},\dots,\lambda_t)$.  It follows that 
\begin{equation}
 \xi^\lambda={\xi^\gamma}_\bullet\xi^\delta,
\end{equation}
 and in particular, if $\chi_1\leq\xi^\gamma$ and $\chi_2\leq\xi^\delta$, then ${\chi_1}_\bullet\chi_2\leq\xi^\lambda$.
 
We recall also \cite[Example 14.4]{jameslecture} that if $d\leq \frac{r}{2}$, then 
\begin{equation}\label{eqn:2parts}\xi^{(r-d,d)}=\sum_{i=0}^d\chi^{(r-i,i)}.\end{equation}

 The key result is the following lemma:
\begin{lemma}\label{lem:chi}
Let $\lambda\vdash r$ then:
\begin{enumerate}[(a)]
\item if $l(\lambda)\geq 2$ then $\langle\xi^\lambda,\chi^{(r-1,1)}\rangle>0$;
\item if $r\geq 4$ and $l(\lambda)\geq 2$ with $\lambda\neq (r-1,1)$ then $\langle\xi^\lambda,\chi^{(r-2,2)}\rangle>0$;
\item if $r\geq 5$ and $l(\lambda)\geq 3$ with $\lambda\neq (r-2,1,1)$ then $\langle\xi^{\lambda},\chi^{(r-3,2,1)}\rangle> 0$.
\end{enumerate}
\end{lemma}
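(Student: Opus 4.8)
The plan is to realise each target irreducible character as a constituent of an induction product of smaller characters — mostly trivial characters of smaller symmetric groups — and then detect its appearance using the two-row decomposition (\ref{eqn:2parts}) together with the Littlewood--Richardson (Pieri) rule. Throughout I use the fact recorded above that if $\lambda=\gamma\bullet\delta$ and $\chi_1,\chi_2$ are constituents of $\xi^\gamma,\xi^\delta$ respectively, then ${\chi_1}_\bullet\chi_2$ is a constituent of $\xi^\lambda$, together with Lemma \ref{lem:triv}, which supplies the trivial character $\chi^{(|\gamma|)}$ as a constituent of every $\xi^\gamma$.

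For (a), write $t=l(\lambda)\geq 2$ and split $\lambda=\gamma\bullet(\lambda_t)$ with $\gamma=(\lambda_1,\dots,\lambda_{t-1})$. Since $\chi^{(|\gamma|)}$ is a constituent of $\xi^\gamma$ and $\xi^{(\lambda_t)}=\chi^{(\lambda_t)}$, the character ${\chi^{(r-\lambda_t)}}_\bullet\chi^{(\lambda_t)}=\xi^{(r-\lambda_t,\lambda_t)}$ is a constituent of $\xi^\lambda$; as $t\geq 2$ forces $\lambda_t\leq r-\lambda_t$, formula (\ref{eqn:2parts}) with $d=\lambda_t\geq 1$ produces the $i=1$ term $\chi^{(r-1,1)}$. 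For (b), split off the first part instead, $\lambda=(\lambda_1)\bullet\delta$, so that $\xi^{(\lambda_1,r-\lambda_1)}$ is a constituent of $\xi^\lambda$. The hypotheses $l(\lambda)\geq 2$ and $\lambda\neq(r-1,1)$ guarantee $r-\lambda_1=|\delta|\geq 2$; provided also $\lambda_1\geq 2$, formula (\ref{eqn:2parts}) with $d=\min(\lambda_1,r-\lambda_1)\geq 2$ yields $\chi^{(r-2,2)}$. The only remaining case is $\lambda=(1^r)$, handled separately via $M^{(1^r)}\cong k\mS_r$: the regular character contains every irreducible, so $\langle\xi^{(1^r)},\chi^{(r-2,2)}\rangle>0$ whenever $(r-2,2)$ is a partition, i.e. for $r\geq 4$.

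For (c) I reduce to (b). Writing $t=l(\lambda)\geq 3$ and $\gamma=(\lambda_1,\dots,\lambda_{t-1})$, I first check that the hypotheses of (b) hold for $\gamma$: namely $l(\gamma)\geq 2$, that $|\gamma|=r-\lambda_t\geq 4$ (using $\lambda_t\leq r/3$ and $r\geq 5$), and — crucially — that $\gamma=(|\gamma|-1,1)$ occurs precisely when $\lambda=(r-2,1,1)$, the excluded partition. Hence (b) applies to $\gamma$ and $\chi^{(|\gamma|-2,2)}=\chi^{(r-\lambda_t-2,2)}$ is a constituent of $\xi^\gamma$. Multiplying by the trivial character $\chi^{(\lambda_t)}=\xi^{(\lambda_t)}$ and applying the Pieri rule, $\chi^{(r-3,2,1)}$ occurs in ${\chi^{(r-\lambda_t-2,2)}}_\bullet\chi^{(\lambda_t)}$, since $(r-3,2,1)$ is obtained from $(r-\lambda_t-2,2)$ by adding a horizontal strip of size $\lambda_t$; the interlacing conditions reduce to $1\leq\lambda_t\leq r-4$, which holds for all $r\geq 5$ under $l(\lambda)\geq 3$. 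This establishes (c).

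The main obstacle is part (c): the bookkeeping needed to confirm that the single excluded partition $(r-2,1,1)$ is exactly the case in which the reduction to (b) fails — equivalently, in which $\gamma$ degenerates to the two-part hook $(|\gamma|-1,1)$ — and the verification of the Pieri horizontal-strip condition, in particular the inequality $\lambda_t\leq r-4$, down to the smallest value $r=5$. As a sanity check, all three statements can alternatively be obtained at once from the classical criterion $\langle\xi^\lambda,\chi^\mu\rangle=K_{\mu\lambda}\neq 0\iff\mu\trianglerighteq\lambda$: parts (a), (b) and (c) then amount respectively to $\lambda_1\leq r-1$, to $\lambda_1\leq r-2$, and to the pair of inequalities $\lambda_1\leq r-3$ and $\lambda_1+\lambda_2\leq r-1$, each immediate from the stated hypotheses on $l(\lambda)$ and the excluded partition.
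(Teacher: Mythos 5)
Your proof is correct, and it follows the same general strategy as the paper (induction products, the trivial constituent from Lemma \ref{lem:triv}, the two-row formula (\ref{eqn:2parts}), and the Littlewood--Richardson/Pieri rule), but the execution of parts (b) and (c) differs. For (a) you coincide with the paper exactly. For (b) the paper splits off the \emph{last} part: when $l(\lambda)\geq 3$ it feeds part (a) into $\gamma=(\lambda_1,\dots,\lambda_{t-1})$ and then checks via Littlewood--Richardson that $\chi^{(r-2,2)}$ occurs in ${\chi^{(r-\lambda_t-1,1)}}_\bullet\chi^{(\lambda_t)}$; you instead split off the \emph{first} part, which reduces everything to formula (\ref{eqn:2parts}) alone, at the cost of treating $\lambda=(1^r)$ separately (your regular-character argument there is fine). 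For (c) both you and the paper reduce to $\gamma$, but the mechanisms differ: the paper stays at the level of permutation characters, using commutativity of the induction product to write ${\xi^{(s-2,2)}}_\bullet\xi^{(\lambda_t)}={\xi^{(s-2,\lambda_t)}}_\bullet\xi^{(2)}$ and then the explicit identity ${\chi^{(r-3,1)}}_\bullet\chi^{(2)}=\xi^{(r-3,2,1)}-\xi^{(r-2,2)}$, so that only a multiplication by $\xi^{(2)}$ ever needs to be analysed; you apply part (b) to $\gamma$ to extract the irreducible $\chi^{(s-2,2)}$ and then verify the Pieri horizontal-strip condition for a strip of size $\lambda_t$ directly. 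A point in your favour: the paper's part (c) also implicitly requires that $\gamma$ satisfies the hypotheses of (b) (namely $|\gamma|\geq 4$ and $\gamma\neq(|\gamma|-1,1)$, the latter being exactly the exclusion of $\lambda=(r-2,1,1)$), and you carry out this bookkeeping explicitly where the paper leaves it tacit. Finally, your closing observation—that all three parts follow at once from Young's rule together with the classical positivity criterion $K_{\mu\lambda}\neq 0$ if and only if $\mu\trianglerighteq\lambda$—is a genuinely shorter route than either argument, though it invokes a standard fact that the paper does not cite and chooses not to rely on.
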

\begin{proof}
The case $l(\lambda)=2$ in $(a)$ and $(b)$ follow from (\ref{eqn:2parts}).  Thus, we may assume that $\lambda=(\lambda_1,\dots,\lambda_t)$ and $t=l(\lambda)\geq 3$.  We will argue by inclusion of Young subgroups.  It is clear that there exists an inclusion of Young subgroups $\mS_{\lambda}\leq \mS_{(r-d,d)}$ for some $d\geq 1$ and if $r\geq 4$ then there exists an inclusion of Young subgroups $\mS_\lambda\leq \mS_{(r-d,d)}$ for some $d\geq 2$.  Now, in both cases, the trivial character $1_{\mS_{(r-d,d)}}$ is a constituent of $1_{\mS_{\lambda}}\!\uparrow^{\mS_{(r-d,d)}}$.  Thus, $\xi^{(r-d,d)}=1_{\mS_{(r-d,d)}}\!\uparrow^{\mS_r}\leq 1_{\mS_{\lambda}}\!\uparrow^{\mS_r}=\xi^\lambda$.  Parts $(a)$ and $(b)$ now follow from (\ref{eqn:2parts}).

For part $(c)$: From parts $(a)$ and $(b)$ and Lemma \ref{lem:triv} follows that $\xi^{(s-2,2)}$ is a constituent of $\xi^{(\lambda_1,\dots,\lambda_{t-1})}$ where $s=\lambda_1+\dots+\lambda_{t-1}$.  Thus ${\xi^{(s-2,2)}}_\bullet\xi^{(\lambda_t)}\leq{\xi^{(\lambda_1,\dots,\lambda_{t-1})}}_\bullet\xi^{(\lambda_t)}=\xi^\lambda$.  From the commutativity and associativity of the induction product follows
\begin{equation*}
{\xi^{(s-2,2)}}_\bullet\xi^{(\lambda_t)}={\xi^{(s-2,\lambda_t)}}_\bullet\xi^{(2)}.
\end{equation*}

Now, from (\ref{eqn:2parts}) follows that $\xi^{(r-3,1)}\leq\xi^{(s-2,\lambda_t)}$.  Thus,
\begin{equation*}\xi^{(r-3,2,1)}={\xi^{(r-3,1)}}_\bullet{\xi^{(2)}}\leq {\xi^{(s-2,\lambda_t)}}_\bullet\xi^{(2)}\leq \xi^\lambda\end{equation*} 
Part $(c)$ follows by Young's rule.
\end{proof}

Recall that every positive integer $r$ has a $p$-adic expansion, and we write this as $r=\sum_{i=0}^\infty \lfloor r\rfloor_{i} p^i$.  So in particular $\lfloor r\rfloor_{0}\in\{0,1,\dots p-1\}$ and $r\equiv \lfloor r\rfloor_{0} \ ({\rm mod}\ p)$.  We note here that the $p$-core of $(r)$ is $(\lfloor r\rfloor_{0})$ and hence the principal $p$-block of $\mS_r$ is indexed by $p$-core $(\lfloor r\rfloor_{0})$.

\begin{lemma}\label{lem:12}\label{lem:2}
Let $r\in\mathbb{N}$ and let $p$ be prime. Then:
\begin{enumerate}[(a)]
\item $\chi^{(r-1,1)}$ lies in the principal $p$-block of $\mS_r$ if, and only if, $p$ divides $r$; and
\item if $p$ is odd, then $\chi^{(r-2,2)}$ lies in the principal $p$-block of $\mS_r$ if, and only if, $p$ divides $r-1$.
\end{enumerate}
\end{lemma}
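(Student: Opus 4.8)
The plan is to translate the block-membership question into a statement about $p$-cores via the Nakayama conjecture, and then to compute and compare the relevant $p$-cores using beta-numbers (equivalently, the abacus with $p$ runners). Since the principal $p$-block is the block whose $p$-core is $(\lfloor r\rfloor_{0})$, which is precisely the $p$-core of $(r)$, the character $\chi^{(r-1,1)}$ (resp.\ $\chi^{(r-2,2)}$) lies in the principal block if, and only if, $(r-1,1)$ (resp.\ $(r-2,2)$) has the same $p$-core as $(r)$.

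To compare $p$-cores I would invoke the standard criterion (see \cite{JamesKerber}) that two partitions whose beta-number sequences have the same length have equal $p$-core if, and only if, for each residue $c$ modulo $p$ the two sequences contain the same number of beta-numbers congruent to $c$. All three partitions $(r)$, $(r-1,1)$, $(r-2,2)$ have at most two parts, so I would use beta-number sequences of length two throughout: writing $(r)=(r,0)$ gives beta-numbers $\{r+1,0\}$, while $(r-1,1)$ gives $\{r,1\}$ and $(r-2,2)$ gives $\{r-1,2\}$. Using a common length of two beads is the point that makes the residue-count comparison legitimate.

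For part (a) the task reduces to deciding when the residue multisets $\{0,\,r+1\}$ and $\{1,\,r\}$ coincide modulo $p$. Matching $0$ with $1$ would force $p\mid 1$ and is impossible for any prime, so the only surviving matching sends $0\mapsto r$ and $r+1\mapsto 1$, each of which is equivalent to $r\equiv 0\pmod p$; hence the cores agree exactly when $p\mid r$. For part (b) I would compare $\{0,\,r+1\}$ with $\{2,\,r-1\}$ modulo $p$. The matching $0\leftrightarrow r-1$, $r+1\leftrightarrow 2$ gives $r\equiv 1\pmod p$ on both sides, while the competing matching $0\leftrightarrow 2$ is exactly where the oddness of $p$ enters: $2\equiv 0\pmod p$ would require $p=2$, so for odd $p$ this matching is excluded and the cores agree precisely when $p\mid r-1$.

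The computations are routine; the one point requiring care, and the only place the hypothesis that $p$ is odd is used, is ruling out the degenerate residue matching in part (b), namely that $2\not\equiv 0\pmod p$. Indeed, for $p=2$ the partitions $(r-2,2)$ and $(r)$ always share a $2$-core, so statement (b) genuinely needs $p$ odd, whereas the same analysis shows that statement (a) in fact holds for every prime. I would also record the harmless convention that these partitions only arise for $r\geq 2$ in (a) and $r\geq 4$ in (b), so that the beta-number computations above are valid.
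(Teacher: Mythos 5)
Your proof is correct, and it reaches the same reduction as the paper — via the Nakayama conjecture, block membership becomes a comparison of $p$-cores with the core $(\lfloor r\rfloor_{0})$ of $(r)$ — but the comparison itself is carried out by a genuinely different device. The paper computes the $p$-cores of $(r-1,1)$ and $(r-2,2)$ explicitly in each congruence class of $r$ modulo $p$ (for instance, the core of $(r-2,2)$ is $(p+1,2)$, $(1,1)$, $(1)$, or $(\lfloor r-2\rfloor_{0},2)$ depending on $\lfloor r-2\rfloor_{0}$) and then matches each answer against $(\lfloor r\rfloor_{0})$. You never compute a core: you pass to beta-sets of common cardinality two, namely $\{r+1,0\}$, $\{r,1\}$, $\{r-1,2\}$, and use the standard abacus criterion from \cite{JamesKerber} that equal bead numbers per runner (i.e.\ equal residue multisets modulo $p$) characterizes equality of $p$-cores; the two possible matchings of each pair of multisets are then disposed of in one line apiece. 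Your computations check out: in (a) the matching $0\leftrightarrow 1$ is impossible for any prime and the other matching forces $p\mid r$; in (b) the matching $0\leftrightarrow 2$ is excluded precisely because $p$ is odd, and the surviving matching forces $p\mid r-1$. This buys two things the paper's case list leaves implicit: it localizes exactly where oddness of $p$ is used, and it shows as a byproduct that part (a) holds for every prime while part (b) genuinely fails at $p=2$ (where $(r-2,2)$ always shares its $2$-core with $(r)$), which is consistent with the paper's separate treatment of $p=2$ via the character $\chi^{(r-3,2,1)}$. What the paper's longer computation buys instead is the explicit list of core partitions. Your housekeeping remarks ($r\geq 2$ in (a), $r\geq 4$ in (b), common bead number two) are exactly the right points of care, so the argument is complete as written.
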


\begin{proof}
This follows from calculating the relevant $p$-cores as follows:
 The $p$-core of $(r-1,1)$ is \begin{displaymath} \left\{\begin{array}{ll}
 \emptyset & \mathrm{if}\; p\; \mathrm{divides}\;  r,\\
(\lfloor r-1\rfloor_{0},1) & \mathrm{if}\; \lfloor r\rfloor_{0}\neq 0\; \mathrm{and}\; \lfloor r-1\rfloor_{0}\neq 0,\\
(p,1) & \mathrm{if}\; \lfloor r\rfloor_{0}\neq 0\; \mathrm{and}\; \lfloor r-1\rfloor_{0}=0.\\ \end{array}\right.\end{displaymath}

If $p$ is odd, then the $p$-core of $(r-2,2)$ is \begin{displaymath} \left\{\begin{array}{ll}
(p+1,2) & \mathrm{if}\; \lfloor r-2\rfloor_{0}=1,\\
(1,1) & \mathrm{if}\; \lfloor r-2\rfloor_{0}=0,\\
(1) & \mathrm{if}\; \lfloor r-2\rfloor_{0}=p-1,\\
(\lfloor r-2\rfloor_{0},2) & \mathrm{otherwise.}\; \end{array}\right.\end{displaymath}
\end{proof}
We can now classify the indecomposable Young permutation modules over fields of odd characteristic.  
\begin{prop}\label{oddprimesclassification}
Let $p$ be prime.  Let $r\in\mathbb{N}$, and $\lambda\vdash r$.  Then the following hold:
\begin{enumerate}[(a)]
\item If $p$ does not divide $r$ then $M^\lambda$ is indecomposable if, and only if, $\lambda=(r)$.
\item  If $p$ is odd, and divides $r$, then $M^\lambda$ is indecomposable if, and only if, $\lambda=(r-1,1)$, or $\lambda=(r)$.
\end{enumerate}
Furthermore, if either
\begin{enumerate}[(i)]
\item $p$ is odd, but $p$ and $r$ are not both $3$, or
\item $p=2$ and $r$ is odd,
\end{enumerate} then $M^\lambda$ is decomposable if, and only if, it has summands lying outside of the principal block.  
\end{prop}
\begin{proof}
Suppose $p$ does not divide $r$. We may assume that $l(\lambda)\geq 2$. Then by Lemma \ref{lem:triv} and Lemma \ref{lem:chi}, both $\chi^{(r-1,1)}$, and $\chi^{(r)}$ have non-zero multiplicity in $\xi^\lambda$.  By Lemma \ref{lem:12} these two characters lie in different $p$-blocks.  It follows that $M^\lambda$ has composition factors lying in different blocks and hence is decomposable.

We now consider the case of $p$ being odd and dividing $r$. By Lemma \ref{lem:n-1}, the permutation module $M^{(r-1,1)}$ is indecomposable, and certainly $M^{(r)}$ is the trivial module and hence is indecomposable.  For the case $r=3$, and $p=3$ this leaves only $M^{(1,1,1)}\cong k\mS_3$ which is certainly decomposable (it decomposes into projective indecomposable summands $Y^{(2,1)}$ and $Y^{(1,1,1)}$ corresponding to the two non-isomorphic simple $k\mS_3$-modules).  The result therefore holds for $r=3$.  If $r\geq 4$ and $\lambda\neq (r),(r-1,1)$, then by Lemma \ref{lem:triv} and Lemma \ref{lem:chi} the characters $\chi^{(r-2,2)}$ and $\chi^{(r)}$ have non-zero multiplicity in $\xi^\lambda$.  By Lemma \ref{lem:2}, the characters $\chi^{(r-2,2)}$ and $\chi^{(r)}$ lie in different $p$-blocks.  It follows that $M^\lambda$ is decomposable.  This completes the proof.
\end{proof}

Thus, for odd primes we have completed the classification. We are left only to complete the less straightforward case of $p=2$ and $r$ even. We first reduce the problem to $2$-part partitions by similar methods to those used above.

\begin{prop}\label{lem:even}
Let $p=2$ and $r\in\mathbb{N}$ be even.  Let $\lambda\vdash r$ such that $l(\lambda)\geq 3$. Then 
\begin{enumerate}[(a)]
\item  $M^\lambda$ is decomposable;
\item  if $r\geq 6$  and $\lambda\neq (r-2,1,1)$ then $M^\lambda$ has at least one indecomposable direct summand which does not lie in the principal $2$-block of $\mS_r$.
\end{enumerate}
\end{prop}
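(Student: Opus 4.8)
The plan is to follow the same block-separation strategy used in Proposition~\ref{oddprimesclassification} and Lemma~\ref{peq2rodd}: if one can exhibit two ordinary constituents of $\xi^\lambda$ lying in distinct $2$-blocks of $\mS_r$, then, decomposing $M^\lambda=\bigoplus_B e_BM^\lambda$ along the block idempotents $e_B$ (each $e_BM^\lambda$ being nonzero precisely when $\xi^\lambda$ has a constituent in $B$), the module $M^\lambda$ must split with summands in different blocks. In particular $M^\lambda$ is decomposable, and if one of the two blocks is non-principal we simultaneously obtain the conclusion of part~(b).

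The crux is choosing the constituents correctly. Since $r$ is even we have $\lfloor r\rfloor_0=0$, so the principal block has $2$-core $\emptyset$. A quick $2$-core computation shows that $\chi^{(r)}$, $\chi^{(r-1,1)}$ and $\chi^{(r-2,2)}$ all have empty $2$-core and hence all lie in the principal block, so these---the characters that sufficed in odd characteristic---are now useless for separating blocks. Instead I would use $\chi^{(r-3,2,1)}$, which by Lemma~\ref{lem:chi}(c) is a constituent of $\xi^\lambda$ whenever $r\geq 5$ and $\lambda\neq(r-2,1,1)$. Its $2$-core is computed by stripping $(r-6)/2$ horizontal dominoes from the first row, leaving the staircase $(3,2,1)$; since $r$ is even this is a genuine (nonempty) $2$-core, so $\chi^{(r-3,2,1)}$ lies outside the principal block. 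Pairing it with $\chi^{(r)}$ (a constituent of $\xi^\lambda$ by Lemma~\ref{lem:triv}, lying in the principal block) yields both parts~(a) and~(b) for all $\lambda$ with $l(\lambda)\geq 3$, $r\geq 6$ and $\lambda\neq(r-2,1,1)$.

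It remains to settle part~(a) in the cases the block argument excludes. For $\lambda=(r-2,1,1)$ (with $r\geq 4$ even) I would invoke the explicit decomposition $M^{(r-2,1,1)}\cong Y^{(r-2,1,1)}\oplus Y^{(r-1,1)}$ of Example~\ref{ex:r-2,1,1}, which is visibly a nontrivial direct sum; note that both summands have empty $2$-core and so lie in the principal block, which is exactly why $(r-2,1,1)$ must be excluded in part~(b). The only remaining small case is $r=4$, where the sole length-$\geq 3$ partition not of the form $(r-2,1,1)$ is $(1^4)$, and $M^{(1^4)}\cong k\mS_4$ is the regular module of a group that is not a $2$-group, hence decomposable. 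The main obstacle is conceptual rather than computational: one must recognise that for even $r$ the low-lying characters $\chi^{(r-1,1)},\chi^{(r-2,2)}$ collapse into the principal block, forcing the passage to $\chi^{(r-3,2,1)}$ and the attendant staircase $2$-core computation, while also correctly isolating $(r-2,1,1)$ as the genuine exception to~(b).
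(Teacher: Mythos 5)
Your proposal is correct and follows essentially the same route as the paper: it likewise pairs $\chi^{(r)}$ with $\chi^{(r-3,2,1)}$ (with $2$-core $(3,2,1)$, hence non-principal) via Lemma \ref{lem:chi}(c) for $r\geq 6$, handles $\lambda=(r-2,1,1)$ by the explicit decomposition of Example \ref{ex:r-2,1,1}, and settles $r=4$, $\lambda=(1^4)$ by observing $k\mS_4$ is decomposable. The only difference is bookkeeping: the paper treats $r=4$ first (noting $\mS_4$ has a single $2$-block and counting its two simple modules), whereas you defer it to the end with the equivalent not-a-$2$-group argument.
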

\begin{proof}
First we address the case $r=4$.  This is a special case since $\mS_4$ has just one $2$-block.  We have two partitions to consider, namely $(2,1^2)$, and $(1^4)$.
We note here again that $M^{(1^4)}\cong k\mS_4$ and hence, since $k\mS_4$ has 2 simple modules, this is decomposable with two non-isomorphic direct summands, the projective indecomposable $k\mS_4$-modules. The remaining partition $(2,1^2)$ is a particular case of $(r-2,1^2)$ for $r\geq 4$.  We have shown in Example \ref{ex:r-2,1,1} that for $r$ even and $r\geq 4$, we have \[M^{(r-2,1,1)}\cong Y^{(r-2,1,1)}\oplus Y^{(r-1,1)}.\] In particular $M^{(r-2,1,1)}$ is decomposable.

Assume now that $r\geq 6$ and $r$ is even.  Since $l(\lambda)\geq 3$, it follows from Lemma \ref{lem:chi} and Lemma \ref{lem:triv} that for $\lambda\neq (r-2,1,1)$ we have $\langle\xi^{\lambda},\chi^{(r-3,2,1)}\rangle\neq 0$ and $\langle\xi^{\lambda},\chi^{(r)}\rangle =1$ .  The $2$-core of $(r-3,2,1)$ is $(3,2,1)$, so $(r-3,2,1)$ does not lie in the principal block of $k\mS_r$.  Hence $M^\lambda$ has direct summands occurring in different 2-blocks of $\mS_r$.
\end{proof}

Proposition \ref{lem:even} yields that in characteristic $2$, if $r\in\mathbb{N}$ is even and $\lambda\vdash r$ with $M^\lambda$ indecomposable then $\lambda=(r)$ or $\lambda$ is a 2-part partition.  Henke determined the $p$-Kostka numbers for $2$-part partitions in \cite{henke}.
\begin{thm}[{\cite[Theorem 3.3]{henke}}]\label{thm:2partKostka}
Let $r\in\mathbb{N}$.  If $0\leq s\leq j\leq r/2$, then $Y^{(r-s,s)}$ is a direct summand of $M^{(r-j,j)}$ if, and only if, $\binom{r-2s}{j-s}\not\equiv 0\; (mod\; p)$. 

 Moreover, if $[M^{(r-j,j)}:Y^{(r-s,s)}]\neq 0$ then $[M^{(r-j,j)}:Y^{(r-s,s)}]= 1$.
\end{thm}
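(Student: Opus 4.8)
The plan is to read the multiplicity off the Brauer construction of Section~2, reduce it to a convolution over $p$-adic digits, and recognise the surviving term via Lucas' theorem. First I would observe that the Young modules occurring in $M^{(r-j,j)}$ are automatically two-part: if $Y^\mu$ is a summand then $\mu\trianglerighteq(r-j,j)$ by Theorem~\ref{thm:Youngs}(c), so $\mu_1\geq r-j$ and $\mu_1+\mu_2\geq r=|\mu|$, forcing $l(\mu)\leq 2$. Hence only $\mu=(r-s,s)$ with $s\leq j$ can occur, and I may assume $0\le s\le j\le r/2$ throughout. Reading the $p$-adic expansion of $(r-s,s)$ off its consecutive differences $r-2s$ and $s$ gives $\mu(i)=(\lfloor r-2s\rfloor_i+\lfloor s\rfloor_i,\ \lfloor s\rfloor_i)$, which is $p$-restricted for every $i$; in particular each $Y^{\mu(i)}$ is projective.

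Next I would fix a vertex $P$ of $Y^{(r-s,s)}$, with associated Young vertex $\mS_\rho$, and combine the Brou\'e correspondence (Theorem~\ref{thm:Brouecorrespondence}) with Erdmann's descriptions (Theorem~\ref{ErdmannBroue} and~(\ref{eqn:decompMY})). Because $(r-j,j)$ has only two parts, every tabloid surviving in $M^{(r-j,j)}(P)$ distributes the $\mS_\rho$-orbits of each length $p^i$ between just two rows, so the refinements indexing the summands are governed by two-part compositions $\gamma^{(i)}=(|\mu(i)|-b_i,\,b_i)$ subject to the single global constraint $\sum_i b_ip^i=j$. This collapses the multiplicity to a digit convolution
\[
[M^{(r-j,j)}:Y^{(r-s,s)}]=\sum_{\substack{0\le b_i\le|\mu(i)|\\ \sum_i b_ip^i=j}}\ \prod_i f_i(b_i),\qquad f_i(b):=[M^{(|\mu(i)|-b,\,b)}:Y^{\mu(i)}],
\]
in which each base factor is a two-part $p$-Kostka number of bounded degree $|\mu(i)|\le 3(p-1)$ with $p$-restricted, hence projective, target.

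I would then evaluate the base factors and recombine. Writing $\mu(i)=(c_i+d_i,d_i)$ with $c_i=\lfloor r-2s\rfloor_i$ and $d_i=\lfloor s\rfloor_i$, dominance (Theorem~\ref{thm:Youngs}(c)) together with the symmetry $M^{(a,b)}\cong M^{(b,a)}$ already forces $f_i(b)=0$ unless $d_i\le b\le c_i+d_i$; the content of the base case is that throughout this interval $f_i(b)=1$. Granting this, substitute $b_i=d_i+e_i$ with $0\le e_i\le c_i<p$. The constraint $\sum_i b_ip^i=j$ becomes $\sum_i e_ip^i=j-s$, and since each $e_i<p$ the tuple $(e_i)$ is forced to be the $p$-adic expansion of $j-s$. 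Thus the convolution has at most one nonzero term, present precisely when $\lfloor j-s\rfloor_i\le\lfloor r-2s\rfloor_i$ for all $i$; by Lucas' theorem this is exactly $\binom{r-2s}{j-s}\not\equiv 0\pmod p$, and when it holds the surviving term equals $\prod_i 1=1$. This yields the stated criterion and the multiplicity-one assertion simultaneously.

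I expect the genuine obstacle to be the base case itself, namely the $p$-restricted instance $f_i(b)=1$ for $d_i\le b\le c_i+d_i$: there the vertex is trivial and the Brauer quotient is vacuous, so this must be supplied separately and is where the essential content sits. I would establish it from the known module structure of the two-row permutation modules in degree below $3p$, equivalently from the two-row decomposition numbers of James through the Specht filtration recorded by~(\ref{eqn:2parts}). The second point requiring care is a fully rigorous justification of the convolution display from Erdmann's formula, including the independence of the digit contributions and the bookkeeping of multiplicities. (Conceptually the whole statement is transparent through the Schur algebra $S(2,r)$, where these numbers are tilting multiplicities for $GL_2$ and the decomposition of $\mathrm{Sym}^{r-j}\otimes\mathrm{Sym}^{j}$ into indecomposable tilting modules produces the binomial condition at once; the reductions of Theorems~\ref{thm:multbyp} and~\ref{thm:addingp} can also be used to shorten the passage to the base cases.)
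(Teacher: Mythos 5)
First, note that the paper does not prove this statement at all: it is quoted directly from Henke \cite[Theorem 3.3]{henke} and used as an external input (e.g.\ in Proposition \ref{prop:indecsdone}), so there is no internal proof to compare your attempt against; it must be judged on its own terms. Judged so, your reduction is sound, and it runs entirely on the paper's Section 2 machinery: dominance does force every summand of $M^{(r-j,j)}$ to be labelled by a two-part partition; your formula $\mu(i)=(\lfloor r-2s\rfloor_i+\lfloor s\rfloor_i,\ \lfloor s\rfloor_i)$ is the correct $p$-adic expansion of $(r-s,s)$; your convolution display is exactly equation (\ref{eqn:decompMY}) combined with Erdmann's explicit description of $M^\lambda(P)$ (i.e.\ the Klyachko formula, as the Remark following (\ref{eqn:decompMY}) notes), specialised to the case where every refinement $\gamma^{(i)}$ has two parts; and the support analysis $d_i\le b_i\le c_i+d_i$, the substitution $b_i=d_i+e_i$, uniqueness of $p$-adic digits, and Lucas' theorem correctly deliver the binomial criterion and the multiplicity-one claim in one stroke. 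Your second worry, about rigour of the convolution, is not a real gap: it is covered by (\ref{eqn:decompMY}) together with the Remark.

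The genuine gap is the one you yourself flag: the base case $f_i(b)=[M^{(|\mu(i)|-b,b)}:Y^{\mu(i)}]=1$ for $d_i\le b\le c_i+d_i$ is asserted, not proved, and it is exactly where all the positive content of the theorem sits. Only half of it is cheap: the bound $f_i(b)\le 1$ follows from characters, since $\langle\xi^{(|\mu(i)|-b,b)},\chi^{\mu(i)}\rangle=1$ by (\ref{eqn:2parts}) and the characters of Young modules are unitriangular against dominance by Theorem \ref{thm:Youngs}(c) and Young's rule. The essential part is the non-vanishing $f_i(b)\ge 1$, i.e.\ that the projective module $Y^{\mu(i)}$ genuinely occurs as a summand of every $M^{(|\mu(i)|-b,b)}$ in the stated interval. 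There the vertex is trivial, so, as you observe, the Brauer construction is silent, and nothing in the paper supplies this fact. Your proposed sources (James's two-part decomposition numbers, or the decomposition of $\mathrm{Sym}^{a}\otimes\mathrm{Sym}^{b}$ into indecomposable tilting modules for $SL_2$) do suffice and are not circular, but extracting the statement from them is itself a nontrivial argument: from decomposition numbers one still needs Brauer reciprocity to obtain the Specht multiplicities of the projective Young modules, the identification of which $p$-regular simple labels the projective cover $Y^{\mu(i)}$, and a triangularity argument to solve for the Kostka number; none of this is carried out. So what you have is a correct and clean reduction of Henke's theorem to a projective base case in degree less than $3p$, rather than a complete proof.
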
 

In order to efficiently calculate binomial coefficients modulo a prime we use Lucas' Theorem which says the following:
\[\binom{r-2s}{j-s}\equiv \prod_i\binom{\lfloor r-2s\rfloor_{i}}{\lfloor j-s\rfloor_{i}}\; \mathrm{mod}\; p.\]

For the remainder of this section, we fix $p=2$.

For $m\in\mathbb{N}$ define $\nu(m)$ such that $2^{\nu(m)}$ is the highest power of $2$ which divides $m$. So in particular $\nu(m)=\min\{i\in\mathbb{N}\mid\lfloor m\rfloor_{i}\neq 0\}$.
Before proceeding to the final part of the classification, we give a preliminary example:
\begin{example}\label{ex:jj}
Let $j\in\N$.  If $0\leq s<j$ then since $\lfloor 2(j-s)\rfloor_{\nu(j-s)}=0$, it follows by Lucas' Theorem that $\binom{2(j-s)}{j-s}$ is even.  Thus, $M^{(j,j)}$ is indecomposable.
\end{example}

\begin{prop}\label{prop:indecsdone}
Let $j,r\in\mathbb{N}$ such that $r$ is even and $r\geq 2j>0$.  Then $M^{(r-j,j)}$ is indecomposable if, and only if, $2^{n_j}$ divides $r-2j$ where $n_j:=\min\{\alpha\in\mathbb{N}\mid j<2^\alpha\}$.

Furthermore, if $2^{n_j}$ does not divide $r-2j$ then \[[M^{(r-j,j)}:Y^{(r-j+2^{\nu(r-2j)},j-2^{\nu(r-2j)})}]=1.\]
\end{prop}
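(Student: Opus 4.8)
The plan is to combine Henke's two-part formula (Theorem~\ref{thm:2partKostka}) with Lucas' theorem, turning the statement into an elementary fact about binary digits. First I would reduce to two-part summands. By Theorem~\ref{thm:Youngs}(c) every summand $Y^\mu$ of $M^{(r-j,j)}$ satisfies $\mu\trianglerighteq(r-j,j)$; since for each $i\geq 2$ the $i$th partial sum of $(r-j,j)$ already equals $r$, dominance forces $\mu_1+\mu_2=r$ and $\mu_k=0$ for $k\geq 3$, so $\mu=(r-s,s)$ for some $0\leq s\leq j$. Hence $M^{(r-j,j)}$ is indecomposable exactly when its only summand is $Y^{(r-j,j)}$ (which occurs with multiplicity $1$ by Theorem~\ref{thm:Youngs}(c)), and by Theorem~\ref{thm:2partKostka} this holds precisely when $\binom{r-2s}{j-s}$ is even for every $0\leq s<j$. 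Setting $t=j-s$ and $d=r-2j$, so that $r-2s=d+2t$, indecomposability of $M^{(r-j,j)}$ becomes the statement that $\binom{d+2t}{t}$ is even for all $1\leq t\leq j$.

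Next I would record the binary criterion. By Lucas' theorem with $p=2$, $\binom{d+2t}{t}$ is odd if and only if $\lfloor t\rfloor_i\leq\lfloor d+2t\rfloor_i$ for every $i$; that is, whenever the $i$th binary digit of $t$ equals $1$, so does that of $d+2t$. Two facts will drive the argument: since $r$ is even, $d=r-2j$ is even, so $\nu(d)\geq 1$ whenever $d>0$; and since $2^{n_j-1}\leq j<2^{n_j}$, every $t$ with $1\leq t\leq j$ has all its nonzero binary digits in positions $0,\dots,n_j-1$, so in particular $\nu(t)\leq n_j-1$.

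For the implication that $2^{n_j}\mid d$ forces indecomposability, I would argue digit by digit. Fix $t$ with $1\leq t\leq j$ and let $i_0=\nu(t)\leq n_j-1$. Because $2^{n_j}\mid d$, the digits $\lfloor d\rfloor_0,\dots,\lfloor d\rfloor_{n_j-1}$ all vanish, and because $\nu(2t)=i_0+1$, the digits $\lfloor 2t\rfloor_0,\dots,\lfloor 2t\rfloor_{i_0}$ all vanish; hence in positions $0,\dots,i_0$ both summands of $d+2t$ are $0$, no carry is generated, and $\lfloor d+2t\rfloor_{i_0}=0$ while $\lfloor t\rfloor_{i_0}=1$. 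By the criterion $\binom{d+2t}{t}$ is even for every such $t$, so $M^{(r-j,j)}$ is indecomposable.

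For the converse, suppose $2^{n_j}\nmid d$; then $d>0$ and $e:=\nu(d)\leq n_j-1$, so $t:=2^{e}=2^{\nu(r-2j)}$ satisfies $1\leq t\leq 2^{n_j-1}\leq j$. Writing $d=2^{e}m$ with $m$ odd gives $d+2t=2^{e}(m+2)$ with $m+2$ odd, whence $\lfloor d+2t\rfloor_{e}=1=\lfloor t\rfloor_{e}$ and $t$ has no other nonzero digit; the criterion makes $\binom{d+2t}{t}$ odd. By Theorem~\ref{thm:2partKostka} the module $Y^{(r-s,s)}$ with $s=j-2^{\nu(r-2j)}$ is then a summand of $M^{(r-j,j)}$ of multiplicity exactly $1$, and since $s<j$ it is distinct from $Y^{(r-j,j)}$; thus $M^{(r-j,j)}$ is decomposable, and rewriting $(r-s,s)=(r-j+2^{\nu(r-2j)},\,j-2^{\nu(r-2j)})$ gives the final multiplicity formula. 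The only points needing genuine care are the carry bookkeeping in the forward direction and the initial verification that no summand with more than two parts occurs, so that Theorem~\ref{thm:2partKostka} governs the full decomposition; both are routine once set up, and I anticipate no serious obstacle beyond keeping the digit arithmetic honest.
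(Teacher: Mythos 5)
Your proposal is correct and takes essentially the same approach as the paper: both reduce to two-part summands and invoke Henke's Theorem~\ref{thm:2partKostka} together with Lucas' theorem, proving the indecomposability direction by inspecting the binary digit in position $\nu(j-s)$ and proving decomposability with the same witness $s=j-2^{\nu(r-2j)}$. The only cosmetic differences are that you spell out the dominance reduction to two-part partitions (left implicit in the paper) and absorb the case $r=2j$ into the general digit argument instead of citing Example~\ref{ex:jj}.
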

\begin{proof}
 We write $r=2j+\beta$.  We assume first that $2^{n_j}$ divides $\beta$.  If $\beta=0$ then by Example \ref{ex:jj}, the permutation module $M^{(j,j)}$ is indecomposable.  Thus, we may assume that $\beta>0$.  
 
Since $2^{n_j}$ divides $\beta$, it follows that $\nu(\beta)\geq n_j$. Let $s\in\N$ be such that $0\leq s\leq j-1$.  Then we have 
\[j-s\leq j<2^{n_j}\leq 2^{\nu(\beta)}\leq \beta,\] and hence $\nu(j-s)< \nu(\beta)$.

Now $r-2s=2(j-s)+\beta$, and hence $\lfloor r-2s\rfloor_{\nu(j-s)}=\lfloor 2(j-s)\rfloor_{\nu(j-s)}=0.$
On the other hand, $\lfloor j-s\rfloor_{\nu(j-s)}=1$, and hence 
\[\binom{\lfloor r-2s\rfloor_{\nu(j-s)}}{\lfloor j-s\rfloor_{\nu(j-s)}}=0.\]
By Lucas' Theorem, the binomial coefficient $\binom{r-2s}{j-s}\equiv 0\;{\rm mod}\;2$, and hence by Theorem \ref{thm:2partKostka}, it follows that $[M^{(r-j,j)}:Y^{(r-s,s)}]=0$.  This holds for all $0\leq s\leq j-1$.  Therefore if $\nu(\beta)\geq n_j$ then $M^{(r-j,j)}$ is indecomposable. 
 
Suppose now that $2^{n_j}$ does not divide $\beta$.  Then $\beta>0$ and $\nu(\beta)<n_j$.  Define $s:=j-2^{\nu(\beta)}$, then $s\geq 0$.  By Lucas' Theorem, the binomial coefficient \[\binom{r-2s}{j-s}=\binom{\beta+2^{\nu(\beta) +1}}{2^{\nu(\beta)}}\] is odd, so by  Theorem \ref{thm:2partKostka}, the multiplicity $[M^{(r-j,j)}:Y^{(r-s,s)}]$ is equal to $1$, and, in particular $M^{(r-j,j)}$ is decomposable. This completes the proof.
\end{proof}

In Proposition \ref{prop:indecsdone}, we have described all indecomposable Young permutation modules for the case $p=2$ and $r$ even, giving infinite families of two part partitions for which they correspond.  We now give a (perhaps) more satisfying description, applying Proposition \ref{prop:indecsdone} to completely describe for a given even $r\in\mathbb{N}$, all the partitions $\lambda$ of $r$ such that $M^\lambda$ is indecomposable:

\begin{prop}\label{thm:indecperm}
Let $r,n\in\N$ such that $r$ is even, and $2^n\leq r<2^{n+1}$. 
  Then  there are precisely $n+1$ partitions $\lambda$ of $r$ with the property that $M^\lambda$ is indecomposable.  These partitions are precisely the partition $(r)$ and the partitions given for each $1\leq i\leq n$ by $(r-k_i,k_i)$, where each $k_i$ is the unique integer such that $2^{i-1}\leq k_i<2^i$ and $k_i\equiv \frac{r-2^n}{2}\; {\rm mod}\; 2^{i-1}$.
\end{prop}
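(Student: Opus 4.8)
The plan is to combine the structural reduction of Proposition~\ref{lem:even} with the explicit two-part criterion of Proposition~\ref{prop:indecsdone}, and then to carry out a purely combinatorial count. First I would invoke Proposition~\ref{lem:even}: for $p=2$ and $r$ even, any $\lambda\vdash r$ with $l(\lambda)\geq 3$ has $M^\lambda$ decomposable. Hence an indecomposable $M^\lambda$ forces $\lambda=(r)$ or $\lambda=(r-j,j)$ with $1\leq j\leq r/2$. The partition $(r)$ always contributes the trivial (indecomposable) module, accounting for one of the claimed $n+1$ partitions, so the whole content is to show that exactly $n$ of the admissible two-part partitions yield indecomposable modules, and that they are precisely the $(r-k_i,k_i)$ described.

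By Proposition~\ref{prop:indecsdone}, $M^{(r-j,j)}$ is indecomposable precisely when $2^{n_j}\mid (r-2j)$, where $n_j=\min\{\alpha\in\N\mid j<2^\alpha\}$. The first step is to rewrite this in congruence form. Partition the range $1\leq j\leq r/2$ into the bands $\{2^{i-1},\dots,2^i-1\}$, on which $n_j=i$ is constant; there the condition $2^{n_j}\mid(r-2j)$ reads $2^i\mid (r-2j)$, and since $r$ is even, dividing by $2$ makes this equivalent to $j\equiv r/2\pmod{2^{i-1}}$. Because $1\leq j\leq r/2<2^n$, the relevant bands are exactly $i=1,\dots,n$. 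For these $i$ we have $2^{i-1}\mid 2^{n-1}$, so $r/2\equiv \frac{r-2^n}{2}\pmod{2^{i-1}}$, and the criterion takes precisely the stated form $j\equiv\frac{r-2^n}{2}\pmod{2^{i-1}}$.

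Now I would count solutions band by band. For each $i\in\{1,\dots,n\}$ the full band $\{2^{i-1},\dots,2^i-1\}$ consists of $2^{i-1}$ consecutive integers forming a complete residue system modulo $2^{i-1}$, so it contains exactly one integer $k_i$ with $k_i\equiv\frac{r-2^n}{2}\pmod{2^{i-1}}$. It remains to confirm that $(r-k_i,k_i)$ is admissible, i.e.\ $k_i\leq r/2$. For $i\leq n-1$ this is immediate, since $k_i\leq 2^i-1<2^{n-1}\leq r/2$. The boundary case $i=n$ is the one point needing care: here $2^n\leq r<2^{n+1}$ gives $2^{n-1}\leq r/2<2^n$, so $r/2$ itself lies in the band $\{2^{n-1},\dots,2^n-1\}$ and obviously satisfies the congruence, whence by uniqueness $k_n=r/2$ and the partition is $(r/2,r/2)$. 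Conversely, any admissible $j$ with $1\leq j\leq r/2$ giving an indecomposable module lies in a unique band $i\leq n$ and must coincide with that band's $k_i$. The $k_i$ lie in disjoint bands, hence are distinct, producing exactly $n$ two-part indecomposable Young permutation modules; together with $(r)$ this gives the asserted total of $n+1$.

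The one genuinely delicate step is the treatment of the top band $i=n$: the naive band overshoots $r/2$, and one must observe both that its unique solution is in fact $k_n=r/2$ (so that it gives a genuine partition) and that no second admissible solution appears. Everything else is the routine verification that consecutive integers form complete residue systems and that the modulus may be shifted from $r/2$ to $\frac{r-2^n}{2}$ using $2^{i-1}\mid 2^{n-1}$. I would also sanity-check the extreme case $r=2$ (where $n=1$ and both $(2)$ and $(1,1)$ are indecomposable) to confirm that the formula degenerates correctly, with $k_1=1$ recovering $M^{(r-1,1)}$ as in Lemma~\ref{lem:n-1}.
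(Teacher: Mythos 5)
Your proof is correct and takes essentially the same route as the paper's: reduction to two-part partitions via Proposition~\ref{lem:even}, translation of the divisibility criterion of Proposition~\ref{prop:indecsdone} into the congruence $j\equiv\frac{r-2^n}{2}\pmod{2^{i-1}}$ on each band $2^{i-1}\leq j<2^i$, and a count of the unique residue in each of the $n$ bands plus the partition $(r)$. If anything, you are more careful than the paper on the one delicate point---checking that the top band's unique solution is $k_n=r/2$, so it respects the constraint $j\leq r/2$ and yields the genuine partition $(r/2,r/2)$---which the paper's own proof passes over silently.
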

\begin{proof}
$M^{(r)}\cong k$ is an indecomposable Young permutation module.  
Let $j\in\N$ be such that $0<j\leq \frac{r}{2}$ and define $n_j$ as in Proposition \ref{prop:indecsdone}, so $2^{n_j-1}\leq j<2^{n_j}$.  If $M^{(r-j,j)}$ is indecomposable then $r-2j=2^{n_j}\alpha$ for some $\alpha\in\mathbb{N}$, by Proposition \ref{prop:indecsdone}.  Since $n_j\leq n$, it follows that $j\equiv \frac{r-2^n}{2}\; {\rm mod}\; 2^{n_j-1}$. 
 Conversely, let $1\leq i\leq n$ then there exists a unique $j$ such that $n_j=i$ and $j\equiv \frac{r-2^n}{2}\; {\rm mod}\; 2^{n_j-1}$.  Then $r=2j+2^{n_j}\gamma$ for some $\gamma\in\N$.  Hence, by Proposition \ref{prop:indecsdone}, the Young permutation module $M^{(r-j,j)}$ is indecomposable.  
\end{proof}

We note here that $k_1=1$, so we have again shown that $M^{(r-1,1)}$ is indecomposable whenever $2$ divides $r$.  It also follows that $k_n=r/2$, corresponding to Example \ref{ex:jj}.

There are some properties of the families of partitions labelling indecomposable permutation modules which are of interest.  We outline some of these now.

By Theorem \ref{thm:multbyp}, it follows that if $\lambda,\mu\vdash r$, then $[M^\lambda:Y^\mu]=[M^{2\lambda}:Y^{2\mu}]$, and hence, if $M^\lambda$ is indecomposable then $[M^{2\lambda}:Y^{2\mu}]=0$ for every partition $\mu$ of $r$ such that $\mu\neq \lambda$.  To determine whether $M^{2\lambda}$ is indecomposable, would require to prove $[M^{2\lambda}:Y^\nu]=0$ whenever $\nu\triangleright 2\lambda$ and at least one part of $\nu$ is odd.  In the following proposition we apply Theorem \ref{thm:indecperm} to the case when $r$ is even, to show that $M^{2\lambda}$ must be indecomposable, and all except one of the indecomposable Young permutation modules for $k\mS_{2r}$ occur in this way.

\begin{prop}\label{prop:multby2indecs}
Let $r\in\N$ be even and let $\lambda\vdash r$.  If $M^\lambda$ is indecomposable then $M^{2\lambda}$ is indecomposable.

Furthermore, there is a bijection between $\{\lambda\vdash r\mid M^\lambda\;{\it is}\;{\it indecomposable}\}$  and $\{\mu\vdash 2r\mid M^\mu\;{\it is}\;{\it indecomposable}\}\backslash \{(2r-1,1)\}$ given by multiplication by $2$.
\end{prop}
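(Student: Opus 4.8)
The plan is to identify both sets in the claimed bijection explicitly using Proposition \ref{thm:indecperm} and then match them term by term. Since $r$ is even, write $2^n\le r<2^{n+1}$; then $2^{n+1}\le 2r<2^{n+2}$, so Proposition \ref{thm:indecperm} applies to $r$ with exponent $n$ and to $2r$ with exponent $n+1$. Hence the partitions indexing the indecomposable $M^\lambda$ for $k\mS_r$ are $(r)$ together with $(r-k_i,k_i)$ for $1\le i\le n$, while those indexing the indecomposable $M^\mu$ for $k\mS_{2r}$ are $(2r)$ together with $(2r-k'_i,k'_i)$ for $1\le i\le n+1$. Here $k_i$ is the unique integer with $2^{i-1}\le k_i<2^i$ and $k_i\equiv\tfrac{r-2^n}{2}\pmod{2^{i-1}}$, and $k'_i$ is the unique integer with $2^{i-1}\le k'_i<2^i$ and $k'_i\equiv r-2^n\pmod{2^{i-1}}$.

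The key step is the identity $k'_{i+1}=2k_i$ for $1\le i\le n$. First I would multiply the defining congruence for $k_i$ by $2$ to obtain $2k_i\equiv r-2^n\pmod{2^{i}}$, and observe that $2^{i}\le 2k_i<2^{i+1}$; by the uniqueness characterising $k'_{i+1}$ this forces $k'_{i+1}=2k_i$. Consequently $2(r-k_i,k_i)=(2r-2k_i,2k_i)=(2r-k'_{i+1},k'_{i+1})$, and of course $2(r)=(2r)$. This already settles the first assertion: every $\lambda$ indexing an indecomposable $M^\lambda$ is sent by multiplication by $2$ to a partition $2\lambda$ appearing in the list of Proposition \ref{thm:indecperm} for $2r$, so $M^{2\lambda}$ is indecomposable. (Alternatively one may reduce to two parts via Proposition \ref{lem:even} and then argue directly from the divisibility criterion of Proposition \ref{prop:indecsdone}, using that $n_{2j}=n_j+1$ and that $2^{n_j}\mid r-2j$ implies $2^{n_{2j}}\mid 2r-4j$.)

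For the bijection itself, multiplication by $2$ is visibly injective on partitions. By the previous paragraph it maps the $n+1$ indexing partitions $(r),(r-k_1,k_1),\dots,(r-k_n,k_n)$ bijectively onto $(2r),(2r-k'_2,k'_2),\dots,(2r-k'_{n+1},k'_{n+1})$, that is, onto all $n+2$ indexing partitions of $2r$ except the single $i=1$ term $(2r-k'_1,k'_1)$. Since $k'_1$ is the unique integer in $[2^0,2^1)$, namely $k'_1=1$, the omitted partition is exactly $(2r-1,1)$, which is indecomposable but not divisible by $2$. This yields the asserted bijection onto $\{\mu\vdash 2r\mid M^\mu\text{ is indecomposable}\}\setminus\{(2r-1,1)\}$.

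I expect the main obstacle to be the bookkeeping in the index shift $i\mapsto i+1$: one must check that the congruences and the dyadic intervals for $k_i$ and $k'_{i+1}$ align exactly, that the image is precisely the codomain list with the $i=1$ term deleted, and that this omitted term is $(2r-1,1)$ rather than some other partition. Everything else is a direct application of Propositions \ref{thm:indecperm} and \ref{prop:indecsdone}.
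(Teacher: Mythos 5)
Your proposal is correct and follows essentially the same route as the paper: both apply Proposition \ref{thm:indecperm} to $r$ and to $2r$, verify that $2k_i$ satisfies the interval condition $2^i\le 2k_i<2^{i+1}$ and the congruence $2k_i\equiv r-2^n \pmod{2^i}$ defining the $(i+1)$st entry of the list for $2r$, and conclude that the only indexing partition of $2r$ missed is the $i=1$ term $(2r-1,1)$. Your write-up just makes explicit the index-shift bookkeeping ($k'_{i+1}=2k_i$) that the paper leaves terse.
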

\begin{proof}
Let $n$ be such that $2^n\leq r<2^{n+1}$ and define $\beta:=r-2^n$.  By Proposition \ref{thm:indecperm} there are precisely $n+1$  partitions $\lambda$ of $r$ such that $M^\lambda$ is indecomposable.  These partitions $\lambda$  are given by $(r)$ and $(r-k_i,k_i)$ where $2^{i-1}\leq k_i< 2^i$ and $k_i\equiv \frac{\beta}{2}\;{\rm mod}\;2^{i-1}$ for $1\leq i\leq n$.  Then $2^i\leq 2k_i<2^{i+1}$ and $2k_i\equiv \beta\;{\rm mod}\; 2^i$.  Since multiplication by $2$ produces the $n+1$ partitions $(2r)$ and $(2r-2k_i,2k_i)$ for $1\leq i\leq n$, an application of Proposition \ref{thm:indecperm} to $2r$ gives the result.  
\end{proof}

\begin{prop}\label{prop:referenced}
Let $r\in\N$ be even and $n$ be such that $2^{n}\leq r< 2^{n+1}$.  Let $0\leq j\leq r/2$ and let $n\leq k$, then $M^{(r-j,j)}$ is indecomposable if, and only if, $M^{(r+2^{k}-j,j)}$ is indecomposable.
\end{prop}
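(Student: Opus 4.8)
The plan is to reduce both sides of the stated equivalence to the arithmetic divisibility criterion of Proposition \ref{prop:indecsdone}, and then to observe that adding $2^k$ to the relevant integer cannot change its divisibility by the power of $2$ that governs indecomposability. Throughout, the crucial point is that the second part $j$ is the same for both partitions $(r-j,j)$ and $(r+2^k-j,j)$, so the quantity $n_j:=\min\{\alpha\in\N\mid j<2^\alpha\}$ attached to each is identical.

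First I would dispose of the degenerate case $j=0$ separately, since Proposition \ref{prop:indecsdone} is stated under the hypothesis $r\geq 2j>0$. When $j=0$ both $M^{(r)}$ and $M^{(r+2^k)}$ are the trivial module $k$, hence indecomposable, so the equivalence holds trivially.

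For $1\leq j\leq r/2$, set $r':=r+2^k$. Since $r$ is even and $k\geq n\geq 1$, the integer $2^k$ is even, so $r'$ is even, and $r'\geq r\geq 2j$; thus Proposition \ref{prop:indecsdone} applies to both $M^{(r-j,j)}$ and $M^{(r'-j,j)}$. The proposition then says that $M^{(r-j,j)}$ is indecomposable if and only if $2^{n_j}$ divides $r-2j$, and that $M^{(r'-j,j)}$ is indecomposable if and only if $2^{n_j}$ divides $r'-2j=(r-2j)+2^k$. The key observation is that $2^{n_j}$ divides $2^k$: indeed, from $j\leq r/2<2^n$ (using $r<2^{n+1}$) we get $j<2^n$, which forces $n_j\leq n\leq k$. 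Consequently $2^{n_j}\mid 2^k$, and therefore $2^{n_j}\mid (r-2j)+2^k$ if and only if $2^{n_j}\mid r-2j$. Comparing the two indecomposability criteria yields the desired equivalence.

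The argument is essentially immediate once Proposition \ref{prop:indecsdone} is available, so there is no substantial obstacle. The only points requiring care are handling the edge case $j=0$, which lies outside the hypotheses of that proposition, and verifying the chain of inequalities $n_j\leq n\leq k$ that makes the added term $2^k$ invisible to the divisibility test by $2^{n_j}$.
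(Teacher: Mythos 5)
Your proof is correct, but it routes through a different key result than the paper does: you invoke Proposition \ref{prop:indecsdone} (the divisibility criterion ``$M^{(r-j,j)}$ is indecomposable iff $2^{n_j}\mid r-2j$''), whereas the paper's proof invokes Proposition \ref{thm:indecperm} (the congruence classification of the admissible second parts $k_i$). The paper writes $r=2^n+\beta$ and $m:=r+2^k=2^k+2^n+\beta$ and compares the two lists produced by Proposition \ref{thm:indecperm}; carried out in full this needs a small case split, since for $k=n$ the leading $2$-power of $m$ is $2^{n+1}$ with $m-2^{n+1}=\beta$, while for $k>n$ it is $2^k$ with $(m-2^k)/2=2^{n-1}+\beta/2$, and one must check this is congruent to $\beta/2$ modulo $2^{i-1}$ in the relevant range $i\leq n$. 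Your argument replaces that bookkeeping with the single observation that $j\leq r/2<2^n$ forces $n_j\leq n\leq k$, so adding $2^k$ to $r-2j$ is invisible to divisibility by $2^{n_j}$; this is arguably cleaner and more self-contained. You also handle the boundary case $j=0$ explicitly (it lies outside the hypotheses of Proposition \ref{prop:indecsdone}), a point the paper's one-line proof does not need to address separately because $(r)$ is covered directly by Proposition \ref{thm:indecperm}. Both routes rest on the same underlying arithmetic, namely Henke's Theorem \ref{thm:2partKostka} combined with Lucas' theorem, so the difference is one of packaging rather than substance --- but yours trades the paper's congruence comparison for a shorter divisibility check.
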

\begin{proof}
Write $r=2^n+\beta$ with $0\leq \beta<2^n$.  Then $r+2^{k}=2^{k}+2^n+\beta$.  An application of Proposition \ref{thm:indecperm} then yields the result. \end{proof}

\bibliographystyle{plain}

\end{document}